\documentclass[12pt]{amsart}
\usepackage{amsmath}
\usepackage{amssymb}
\usepackage{longtable}
\usepackage[titletoc]{appendix}
\usepackage{color}
\usepackage{multirow,bigdelim}
\usepackage{stmaryrd}
\usepackage{verbatim}
\usepackage{tikz}
\usepackage{tikz-cd}
\usepackage{mathrsfs}
\usepackage{mathtools}
\usepackage[ruled,vlined,linesnumbered]{algorithm2e}

\usepackage{stmaryrd}
\SetSymbolFont{stmry}{bold}{U}{stmry}{m}{n} 
\usepackage{bm}
\usepackage{hyperref}

\allowdisplaybreaks

\newtheorem{theorem}[equation]{Theorem}
\newtheorem{lemma}[equation]{Lemma}

\newtheorem{example}[equation]{Example}

\theoremstyle{remark}
\newtheorem{remark}[equation]{Remark}

\numberwithin{equation}{subsection}

\allowdisplaybreaks[1]

\newcommand{\inorm}[1]{{\lvert #1 \rvert}_{\infty}}

\newcommand{\FF}{\mathbb{F}}

\newcommand{\TT}{\mathbb{T}}
\newcommand{\GG}{\mathbb{G}}

\newcommand{\CC}{\mathbb{C}}

\newcommand{\bA}{\mathbf{A}}

\newcommand{\bx}{\mathbf{x}}

\newcommand{\bu}{\mathbf{u}}
\newcommand{\bv}{\mathbf{v}}

\newcommand{\cL}{\mathcal{L}}

\DeclareMathAlphabet{\matheur}{U}{eur}{m}{n}

\newcommand{\fs}{\mathfrak{s}}

\DeclareMathOperator{\Div}{Div}
\DeclareMathOperator{\divisor}{div}

\DeclareMathOperator{\Ker}{Ker} \DeclareMathOperator{\GL}{GL}
\DeclareMathOperator{\Mat}{Mat}

\DeclareMathOperator{\Id}{Id} 
 
\DeclareMathOperator{\ord}{ord} \DeclareMathOperator{\wt}{wt}
  
\DeclareMathOperator{\Li}{Li}

\DeclareMathOperator{\nul}{null}

\newcommand{\oK}{\overline{K}}
\newcommand{\oL}{\overline{L}}

\newcommand{\tr}{\mathrm{tr}}

\newcommand{\SageMath}{\textsf{SageMath}}

\newcommand{\laurent}[2]{{#1 (\!( #2 )\!)}}

\begin{document}
\title[]{{\large{L\MakeLowercase{inear equations on $\MakeLowercase{t}$-modules}}}}

\author{Yen-Tsung Chen}
\address{Department of Mathematics, National Cheng Kung University, Tainan City, 70101, Taiwan(R.O.C.)}

\email{ytchen.math@gmail.com}

\author{Wei-Cheng Huang}
\address{Department of Mathematics, University of Rochester, Rochester, NY 14627, U.S.A.}

\email{w.huang@rochester.edu}

\author{Changningphaabi Namoijam}
\address{Department of Mathematics, Fairfield University, Fairfield, CT 06824, U.S.A.}

\email{cnamoijam@fairfield.edu}

\thanks{}

\subjclass[2010]{}

\date{\today}

\begin{abstract} 
    Let $F$ be a number field. Given finitely many $F$-valued points on a commutative algebraic group defined over $F$, a question of interest to number theorists is the determination of the group of their linear relations. In this article, we investigate an analogous problem in the $t$-module setting. Let $L$ be a global function field, and $E$ be a $d$-dimensional $t$-module defined over $L$. Given finitely many points on $E$ with entries in $L$, we establish the connection between their $\mathbb{F}_q[t]$-linear relations and polynomial solutions of Frobenius difference equations. 
    Consequently, we deduce an algorithm to compute the module of their $\mathbb{F}_q[t]$-linear relations. 
\end{abstract}

\keywords{}

\maketitle


\section{Introduction}

    Let $G=\mathbb{G}_m(\mathbb{Q})=\mathbb{Q}^\times$ be the multiplicative group of the field of rational numbers. Given $\alpha_1,\dots,\alpha_\ell\in\mathbb{Q}^\times$, if there are $n_1,\dots,n_\ell\in\mathbb{Z}$, not all zero, such that
    \[
        \alpha_1^{n_1}\cdots\alpha_\ell^{n_\ell}=1,
    \]
    then $\alpha_1,\dots,\alpha_\ell$ are called \emph{multiplicative dependent}. The question of deciding the multiplicative dependency has connection to Diophantine geometry and transcendence theory. For example, given $\alpha_1,\dots,\alpha_\ell\in\mathbb{Q}^\times$ with $|\alpha_i|<1$ for $1\leq i\leq\ell$, the above multiplicative relation induces the following linear relation among logarithms
    \[
        n_1\log(\alpha_1)+\cdots+n_\ell\log(\alpha_\ell)=0,
    \]
    and the determination of the linear relations among logarithms of algebraic numbers is one of the core topics in transcendental number theory. The analogue of the multiplicative dependent question can be asked for a commutative connected algebraic group $G$ defined over a number field $F$. Given $P_1,\dots,P_\ell\in G(F)$, the $F$-valued points on $G$, how can we decide if there are $n_1,\dots,n_\ell\in\mathbb{Z}$, not all zero, such that $n_1P_1+\cdots n_\ell P_\ell=O$? Using techniques from the geometry of numbers, Masser \cite{Mas88} established an upper bound for the generators of the following relation group
    \begin{equation}\label{Eq:Intro_Relation_Module}
        \mathcal{R}:=\{(n_1,\dots,n_\ell)\in\mathbb{Z}^{\oplus\ell}\mid n_1P_1+\cdots+n_\ell P_\ell=O\}.
    \end{equation}
 Masser's result motivates our study of an analogous problem for Anderson $t$-modules, which are objects that play a crucial role in the arithmetic of global function fields.
    
\subsection{Main Result}
    Let $\mathbb{F}_q$ be a finite field with $q$ elements, where $q$ is a positive power of a prime number $p$. Let $\bA=\mathbb{F}_q[\theta]$ and let $K=\mathbb{F}_q(\theta)$, the field of fractions of $\bA$. For $f/g\in K$ with $f,g\in \bA$ and $g\neq 0$, we set $|f/g|_\infty:=q^{\deg_\theta f-\deg_\theta g}$ to be a normalized non-Archimedean norm on $K$. We denote the completion of $K$ with respect to $|\cdot|_\infty$ by $K_\infty$.  Note that $K_\infty$ can be identified with $\mathbb{F}_q(\!(1/\theta)\!)$. We further set $\mathbb{C}_\infty$ to be the the completion of a fixed algebraic closure of $K_\infty$, and denote by $\overline{K}$ the algebraic closure of $K$ in $\mathbb{C}_\infty$.

    For a field $L\subset\mathbb{C}_\infty$ containing $K$, we consider $L[t,\tau]$, the ring of twisted polynomials in two variables subject to the relations $\alpha t=t\alpha$, $t\tau=\tau t$, and $\alpha^q\tau=\tau\alpha$ for $\alpha\in L$. For $n\in\mathbb{Z}$, we define the $n$-fold twisting operation on $\mathbb{C}_\infty(\!(t)\!)$ by setting
    \begin{align*}
        \mathbb{C}_\infty(\!(t)\!)&\to\mathbb{C}_\infty(\!(t)\!)\\
        f=\sum_{i\geq i_0}c_it^i&\mapsto f^{(n)}:=\sum_{i\geq i_0}c_i^{q^n}t^i.
    \end{align*}
    This operation can be naturally extended to $\Mat_{r\times s}\big(\mathbb{C}_\infty(\!(t)\!)\big)$ entrywise, namely, for $B=(B_{ij})$ we define $B^{(n)}:=(B_{ij}^{(n)})$. Note that each element of $\Mat_{d\times d}(L[t,\tau])$ can be realized as an $\mathbb{F}_q$-linear endomorphism on the column space $\Mat_{d\times 1}(L[t])$. Indeed, given $\bv\in\Mat_{d\times 1}(L[t])$ and $A=\sum_{i=0}^NA_i\tau^i\in\Mat_{d\times d}(L[t,\tau])$ with $A_i\in\Mat_{d\times d}(L[t])$, we can define
    \begin{equation}\label{Eq:tau_action}
        A\cdot\bv:=\sum_{i=0}^NA_i\bv^{(i)}\in\Mat_{d\times 1}(L[t]).
    \end{equation}
    By an abuse of notation, we will drop the dot and simply write $A\bv$ when it is clear from the context.

    For $d>0$ and a field $L\subset\mathbb{C}_\infty$ containing $K$, a $d$-dimensional $t$-module defined over $L$ is a pair $E=(\mathbb{G}_a^d,\phi)$, where $\phi$ is an $\mathbb{F}_q$-algebra homomorphism $\phi:\mathbb{F}_q[t]\to\Mat_{d\times d}(L[\tau])$ satisfying some extra properties (see \S2.1 for  details). In particular, the structure map $\phi$ induces a non-scalar $\mathbb{F}_q[t]$-action on $\Mat_{d\times 1}(L)$. The study of $t$-modules dates back to Carlitz \cite{Car35} who introduced the $1$-dimensional $t$-module $\mathbf{C}=(\mathbb{G}_a,[\cdot])$ defined over $K$, which is uniquely determined by $[t]=\theta+\tau\in K[\tau]$. This $t$-module is now called the Carlitz module and can be regarded as an analogue of $\mathbb{G}_m$ in the global function field setting because of several striking parallel properties such as the analytic uniformization and the explicit class field generation \cite{Hay79}. Later, Drinfeld \cite{Dri76} introduced \emph{elliptic modules}, now called Drinfeld modules, as one of the tools for his solution of Langlands conjectures for $\GL_2$ over global function fields. Soon after, Anderson \cite{And86} found the correct path to generalize the theory to the higher dimensional setting by introducing the notion of abelian $t$-modules and $t$-motives.

    The main goal of the present article is to investigate an analogue of the multiplicative dependent problem for $t$-modules defined over a global function field. 
    Given a $d$-dimensional $t$-module $E=(\mathbb{G}_a^d,\phi)$ defined over $L$, we fix finitely many column vectors $P_1,\dots,P_\ell\in\Mat_{d\times 1}(L)$. An analogue of the multiplicative dependent problem in this context is the determination of the following $\mathbb{F}_q[t]$-module of relations
    \begin{equation}\label{Eq:Relation_Module}
        \mathcal{R}=\{(a_1,\dots,a_\ell)\in\mathbb{F}_q[t]^{\oplus\ell}\mid \phi_{a_1}(P_1)+\cdots+\phi_{a_\ell}(P_{\ell})=0\}.
    \end{equation}
   This problem has been studied by Denis for a wide family of $t$-modules, where an analogue of Masser's result was established using the theory of local height functions \cite{Den92a}, \cite{Den92b}.  The above multiplicative dependent problem and its $t$-motivic counterpart for iterated extensions of tensor powers of the Carlitz module play crucial roles in investigations of the linear relations among multiple zeta values in positive characteristic, and several new techniques and results have been introduced in \cite{Cha16,CPY19,CH25,KL16}. Adopting these $t$-motivic aspects, 
   Ho \cite{Ho20} 
   studied the multiplicative dependent problem for Drinfeld modules defined over $\bA$ with $P_1,\dots,P_\ell\in \bA$, and the first author \cite{Che23} generalized the result to Drinfeld modules defined over any global function field $L$ with $P_1,\dots,P_\ell\in L$.

    The innovation of the present article is a new approach to this multiplicative problem for $t$-modules without using their companion $t$-motives. In particular, our main result stated below provides an effective computational method to determine the module of $\mathbb{F}_q[t]$-linear relations for \emph{any} $t$-module defined over a global function field.
    
    \begin{theorem}\label{Thm:Main_Thm}
        Let $E=(\mathbb{G}_a^d,\phi)$ be a $d$-dimensional $t$-module defined over a global function field $L$. Given $P_1,\dots,P_\ell\in E(L)=\Mat_{d\times 1}(L)$, there is an explicitly constructed matrix $\mathbb{B}\in\Mat_{m\times n}(\mathbb{F}_q[t])$ with $\deg_t(\mathbb{B})\leq 1$, $n\geq\ell$ is an effectively computable number depending on $E,P_1,\dots,P_\ell,L$, and some $m\geq 0$ such that the following projection map is a well-defined surjective $\mathbb{F}_q[t]$-module homomorphism
        \begin{align*}
            \Ker(\mathbb{B})&\twoheadrightarrow\mathcal{R}\\
            (b_1,\dots,b_{n-\ell},a_1,\dots,a_\ell)^\tr&\mapsto(a_1,\dots,a_\ell),
        \end{align*}
        where $\Ker(\mathbb{B})$ is the collection of column vectors $\bm{x}\in\Mat_{n\times 1}(\mathbb{F}_q[t])$ with $\mathbb{B}\bm{x}=0$.
        In particular, there is an algorithm that determines the relation module $\mathcal{R}$.
    \end{theorem}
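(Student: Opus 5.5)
The plan is to turn the $\mathbb{F}_q[t]$-linear relations into polynomial solutions of a Frobenius difference equation, then bound those solutions so that everything reduces to linear algebra over $\mathbb{F}_q[t]$.

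\textbf{Step 1: the difference equation.} Write $\phi_t=\sum_{i=0}^N A_i\tau^i$ with $A_i\in\Mat_{d\times d}(L)$, and let $\phi_t$ act on $\Mat_{d\times 1}(L[t])$ through \eqref{Eq:tau_action}. Consider the evaluation map
\[
\varepsilon:\Mat_{d\times 1}(L[t])\to\Mat_{d\times 1}(L),\qquad \sum_j t^j\bv_j\mapsto\sum_j\phi_{t^j}(\bv_j).
\]
A direct check on monomials $t^j\bv$ gives $\Ker(\varepsilon)=(t-\phi_t)\Mat_{d\times 1}(L[t])$. Consequently $(a_1,\dots,a_\ell)\in\mathcal{R}$ if and only if there is $\bm g\in\Mat_{d\times 1}(L[t])$ with
\[
\sum_{i=1}^\ell a_iP_i=(t-\phi_t)\bm g .
\]
This is the Frobenius difference equation, and we want its polynomial solutions.

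Comparing $t$-coefficients, with $\bm g=\sum_{j=0}^{D}t^j\bm g_j$, $a_i=\sum_j a_{ij}t^j$ and $c_j:=\sum_i a_{ij}P_i$, gives
\[
\bm g_{j-1}-\phi_t(\bm g_j)=c_j\quad(0\le j\le D+1),\qquad \bm g_{-1}=\bm g_{D+1}=0 .
\]
So every $\bm g_j$ lies in the $\mathbb{F}_q$-span of the points $\phi_{t^k}(P_i)$, and $\deg_t\bm g<\max_i\deg_t a_i$.

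\textbf{Step 2: a uniform finite-dimensional space.} This is the main obstacle. We must produce a finite-dimensional $\mathbb{F}_q$-subspace $V\subset\Mat_{d\times 1}(L)$, depending only on $E$, the $P_i$ and $L$ and not on the $a_i$, that contains every coefficient $\bm g_j$ of every solution. I would argue place by place on $L$.

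At the finitely many places where some $P_i$ or some entry of the $A_i$ has a pole, or where $\phi$ has bad reduction, I would bound the polar orders of the $\bm g_j$. At all other places the recursion preserves integrality, so nothing needs to be done there.

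The natural tool is a descending induction from $j=D$, using $\bm g_{j-1}=\phi_t(\bm g_j)+c_j$ together with the terminal condition $\phi_t(\bm g_0)=-c_0$. If some $\bm g_j$ had a pole of large order at $v$, then the $\tau$-powers in $\phi_t$ would amplify it roughly $q$-fold at each step. This amplification can never be cancelled by the bounded terms $c_j$, which contradicts $\phi_t(\bm g_0)$ being bounded.

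The delicate point is that $A_N$ need not be invertible. To handle it I would first try to pass to a suitable power $\phi_{t^s}$, or a triangular form of $\phi_t$ coming from $d\phi_t-\theta$ being nilpotent, so that each coordinate block sees a genuine Frobenius growth. Failing that, I would use Denis' local heights \cite{Den92a} to get the pole bounds. Once pole orders are bounded by an effective divisor $\mathfrak D$, take $V=\mathcal L(\mathfrak D)^{\oplus d}$, which is effectively computable via Riemann--Roch.

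\textbf{Step 3: linearization.} Fix an $\mathbb{F}_q$-basis $e_1,\dots,e_k$ of $V$. Choose a finite-dimensional $\mathbb{F}_q$-space $W\supseteq V$ containing all $P_i$ and all $\phi_t(e_r)$, with basis $w_1,\dots,w_m$. Write
\[
\bm g=\sum_r b_r(t)e_r\quad\text{with } b_r\in\mathbb{F}_q[t].
\]
Then
\[
(t-\phi_t)\bm g-\sum_i a_iP_i=\sum_r b_r\bigl(te_r-\phi_t(e_r)\bigr)-\sum_i a_iP_i .
\]
Expanding $te_r-\phi_t(e_r)$ and $P_i$ in the basis $w_1,\dots,w_m$ yields a matrix $\mathbb{B}\in\Mat_{m\times(k+\ell)}(\mathbb{F}_q[t])$ with $\deg_t\mathbb{B}\le1$. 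The vanishing of the displayed expression is exactly $\mathbb{B}(b_1,\dots,b_k,a_1,\dots,a_\ell)^{\tr}=0$.

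By Step 1, every element of $\Ker(\mathbb{B})$ projects into $\mathcal{R}$, since the evaluation map $\varepsilon$ kills $(t-\phi_t)\bm g$. By Steps 1 and 2, every relation lifts, so the projection is surjective; this gives the theorem with $n=k+\ell$.

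Finally, $\Ker(\mathbb{B})$ is computable by Hermite or Smith normal form over the PID $\mathbb{F}_q[t]$. Projecting a basis of $\Ker(\mathbb{B})$ to its last $\ell$ coordinates then gives generators of $\mathcal{R}$, which is the claimed algorithm.
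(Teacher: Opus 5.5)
Your Steps 1 and 3 are sound and follow the paper's architecture. Step 1 is Theorem~\ref{Thm:SpPoly}(1); your evaluation map $\varepsilon$ is a clean way to see $\mathscr{K}_1=\mathscr{K}_2=\mathcal{R}$, and it does not even need the $P_i$ to be $\mathbb{F}_q$-independent. Step 3 is the linearization in Theorem~\ref{Thm:Induced_Linear_System}. Your remark that integrality is automatic at places where $\phi_t$ and the $P_i$ are integral is also correct, since $\bm g_j=\sum_{k\ge 0}\phi_{t^k}(c_{j+1+k})$.

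The gap is Step 2, which is the actual content of the theorem (the paper's Theorem~\ref{Thm:Sol_Divisor}). You need a uniform, effectively computable bound on the poles of all $\bm g_j$ at the bad places, including the infinite places. You only list strategies you would try, and none of them works in the generality claimed.

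\textbf{The amplification argument.} It needs the top coefficient $A_N$ to detect the pole. Take the non-abelian module of Example~\ref{E:non-ab}, $\phi_t=\begin{pmatrix}\theta+\tau&\tau\\ \theta&\theta\end{pmatrix}$, and $\bm g_j=(z,-z)^{\tr}$. Then $\phi_t(\bm g_j)=(\theta z,0)^{\tr}$: the Frobenius terms cancel, so the pole is not multiplied by $q$ in that step. Your claim of $q$-fold growth at every step is therefore false. Controlling how such cancellations propagate through the coupling of coordinates, for an arbitrary $t$-module, needs a genuinely new argument.

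\textbf{Passing to $\phi_{t^s}$.} This does not help. The leading coefficient $\begin{pmatrix}1&1\\0&0\end{pmatrix}$ has entries in $\mathbb{F}_q$ and is idempotent, so every $\phi_{t^s}$ has the same singular leading coefficient.

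\textbf{Jordan form and heights.} Putting $\partial\phi_t$ in Jordan form constrains only the $\tau^0$-part and gives no triangular structure on the $\tau$-part. Denis' local heights are available only for a restricted family of $t$-modules, and would in any case have to be made effective to produce $n$.

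\textbf{The missing idea.} Treat $\bm x\in\Mat_{d\times 1}(L[t])$ as a whole, with the Gauss valuation $\ord_v$ on $L[t]$. Then triangularize the operator $\phi_t-t$ itself, not $\phi_t$, over the non-commutative ring $L[t,\tau]$.

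1. Theorem~\ref{Thm:Triangularization} gives a lower triangular $A_D$ with $A_D(\phi_t-t)$ upper triangular and all diagonal entries nonzero. Nonvanishing uses $\det\partial(\phi_t-t)=(\theta-t)^d\neq 0$. The $\tau$-degrees, $t$-degrees and poles of $A_D$ are bounded explicitly via a Siegel-type count with Riemann--Roch.
2. By Lemma~\ref{Lem:Emb}, $\mathrm{Sol}_\phi(P_1,\dots,P_\ell)$ embeds into the solutions of the triangular system.
3. Back-substitution with the one-variable estimate of Lemma~\ref{Lem:1D_Case_New} then yields $D_{\mathcal{P}}$. That estimate only needs the leading $\tau$-coefficient $\kappa_r\in L[t]$ of each diagonal entry to be nonzero.

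In the example above, choosing $a_{21}=1$ and $a_{22}=-(\theta-t+\tau)\theta^{-1}$ gives second diagonal entry $t\theta^{-q}\tau-(\theta-t)^2/\theta$. Its leading coefficient involves $t$, and this is how the growth hidden from your coefficient-by-coefficient recursion in $\phi_t$ becomes visible.

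Without this step, or a genuine substitute, neither the surjectivity of $\Ker(\mathbb{B})\to\mathcal{R}$ nor the effective computability of $n$ is established.
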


    The theoretical part will be the combinations of Theorem~\ref{Thm:SpPoly} and Theorem~\ref{Thm:Induced_Linear_System}. The algorithm will be given in \S4.

\subsection{Strategy and Organization}
    Let $\TT:=\{\sum_{i\geq 0}a_it^i\in\mathbb{C}_\infty\llbracket t\rrbracket\mid\lim_{i\to\infty}|a_i|_\infty=0\}$ be the Tate algebra on the closed unit disk of $\mathbb{C}_\infty$. Then, $\Mat_{d\times d}(\mathbb{C}_\infty[t,\tau])$ acts on $\Mat_{d\times 1}(\TT)$ in the same way as it acts on $\Mat_{d\times 1}(L[t])$ in \eqref{Eq:tau_action}. Given a $d$-dimensional $t$-module $E=(\mathbb{G}_a^d,\phi)$, consider the special functions of $\phi$
    \[
        \mathfrak{sf}_\phi:=\{\bm{f}\in\Mat_{d\times 1}(\TT)\mid(\phi_t-t)\bm{f}=0\}.
    \]
    It was demonstrated by Anderson \cite{And86} that if $E$ is an \emph{abelian} $t$-module, then $\mathfrak{sf}_\phi$ has a strong connection to the period lattice of $E$. In particular, Anderson established a new criterion of uniformizability for abelian $t$-modules using the so-called Anderson generating functions. Later, Maurischat extended this connection to \emph{any} $t$-modules \cite{Mau22} (see \cite[Thm.~A]{GM21} for the case of general Anderson modules). These results indicated that the Frobenius difference operator $\phi_t-t$ encodes important arithmetic information of the $t$-module $E$.

    The starting point of our approach is the observation that connects the multiplicative problem for a given $d$-dimensional $t$-module $E=(\mathbb{G}_a^d,\phi)$ defined over a global function field $L$ to the solution space of an inhomogeneous difference equation related to $\phi_t-t$. More precisely, the following result, restated in Theorem~\ref{Thm:SpPoly}, serves as the initial step of our investigation.
    \begin{theorem}\label{Thm:Intro_Initial}
        Let $E=(\mathbb{G}_a^d,\phi)$ be a $d$-dimensional $t$-module defined over a global function field $L$. Let $P_1,\dots,P_\ell\in\Mat_{d\times 1}(L)$ and let $\mathcal{R}$ be the module of $\mathbb{F}_q[t]$-linear relations given in \eqref{Eq:Relation_Module}. Define the solution space of the inhomogeneous difference equation
        \[
            \mathrm{Sol}_\phi(P_1,\dots,P_\ell):=\{(\bm{x},a_1,\dots,a_\ell)\in\Mat_{d\times 1}(L[t])\times\mathbb{F}_q[t]^{\oplus\ell}\mid(\phi_t-t)\bm{x}=\sum_{i=1}^\ell a_iP_i\}.
        \]
        If $P_1,\dots,P_\ell$ are $\mathbb{F}_q$-linearly independent, then the following map
        \begin{align*}
            \mathrm{Sol}_\phi(P_1,\dots,P_\ell)&\to\mathcal{R}\\
            (\bm{x},a_1,\dots,a_\ell)&\mapsto(a_1,\dots,a_\ell)
        \end{align*}
        is a well-defined $\mathbb{F}_q[t]$-module isomorphism.
    \end{theorem}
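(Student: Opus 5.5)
The plan is to build an explicit ``evaluation at $t=\phi_t$'' map and to show that $\phi_t-t$ maps $\Mat_{d\times 1}(L[t])$ exactly onto its kernel. Define
\[
\varepsilon:\Mat_{d\times 1}(L[t])\to\Mat_{d\times 1}(L),\qquad \bx=\sum_{j\ge 0}\bx_j t^j\mapsto \sum_{j\ge 0}\phi_{t^j}(\bx_j),
\]
where $\bx_j\in\Mat_{d\times 1}(L)$. This map is $\mathbb{F}_q$-linear.

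\emph{Step 1: basic properties of $\varepsilon$.} I will check three things.
\begin{itemize}
\item[(i)] $\varepsilon(t\bx)=\phi_t(\varepsilon(\bx))$. This is immediate from the definition.
\item[(ii)] $\varepsilon(\phi_t\bx)=\phi_t(\varepsilon(\bx))$. By \eqref{Eq:tau_action}, $\phi_t$ acts on $\Mat_{d\times 1}(L[t])$ coefficientwise in $t$, because twisting fixes $t$. Hence $\phi_t\bx=\sum_j\phi_t(\bx_j)t^j$, and $\phi_{t^j}\phi_t=\phi_t\phi_{t^j}$.
\item[(iii)] $\varepsilon(aP)=\phi_a(P)$ for $a\in\mathbb{F}_q[t]$ and $P\in\Mat_{d\times1}(L)$.
\end{itemize}
From (i) and (ii) we get $\varepsilon\circ(\phi_t-t)=0$.

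\emph{Step 2: the map is well defined and $\mathbb{F}_q[t]$-linear.} First, $\mathrm{Sol}_\phi(P_1,\dots,P_\ell)$ is an $\mathbb{F}_q[t]$-submodule, with $t$ acting by $(\bx,a)\mapsto(t\bx,ta)$, because multiplication by $t$ commutes with $\phi_t-t$. Now take $(\bx,a_1,\dots,a_\ell)\in\mathrm{Sol}_\phi$ and apply $\varepsilon$ to $(\phi_t-t)\bx=\sum_i a_iP_i$. By Step 1 and (iii) this gives $0=\sum_i\phi_{a_i}(P_i)$, so $(a_1,\dots,a_\ell)\in\mathcal{R}$. The projection is clearly $\mathbb{F}_q[t]$-linear.

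\emph{Step 3: injectivity.} Suppose $(\bx,0,\dots,0)\in\mathrm{Sol}_\phi$, so $(\phi_t-t)\bx=0$. Assume $\bx\neq0$ and let it have $t$-degree $n$ with leading coefficient $\bx_n\neq 0$. The operator $\phi_t$ does not raise the $t$-degree. Therefore the $t^{n+1}$-coefficient of $(\phi_t-t)\bx$ is $-\bx_n$, a contradiction. So $\bx=0$.

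This degree argument uses that $\bx$ is polynomial in $t$. I would double-check at this point whether the $\mathbb{F}_q$-linear independence hypothesis is genuinely needed here or only later, for instance to compare with the linear system in Theorem~\ref{Thm:Induced_Linear_System}. In my argument it does no harm.

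\emph{Step 4: surjectivity.} This is the main step. Given $(a_1,\dots,a_\ell)\in\mathcal{R}$, put $\bm{y}:=\sum_i a_iP_i=\sum_j\bm{y}_jt^j\in\Mat_{d\times 1}(L[t])$. By (iii), $\varepsilon(\bm{y})=\sum_i\phi_{a_i}(P_i)=0$. Hence
\[
\bm{y}=\bm{y}-\varepsilon(\bm{y})=\sum_j\bigl(t^j-\phi_t^{\,j}\bigr)\bm{y}_j .
\]
The operators $t$ and $\phi_t$ commute on $\Mat_{d\times1}(L[t])$, so we may factor
\[
t^j-\phi_t^{\,j}=-(\phi_t-t)\sum_{k=0}^{j-1}t^{\,j-1-k}\phi_t^{\,k}.
\]
Setting
\[
\bx:=-\sum_j\sum_{k=0}^{j-1}t^{\,j-1-k}\phi_t^{\,k}(\bx\!=\!\bm{y}_j)
\]
is not the right notation; concretely, set $\bx:=-\sum_j\sum_{k=0}^{j-1}t^{\,j-1-k}\phi_{t^k}(\bm{y}_j)$. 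This lies in $\Mat_{d\times 1}(L[t])$ because $\phi$ is defined over $L$, and it satisfies $(\phi_t-t)\bx=\bm{y}$.

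The only delicate point is the commutation of $t$ with $\phi_t$ in this factorization. It holds because twisting acts only on coefficients and leaves $t$ fixed. Combining Steps 2--4 gives the claimed isomorphism.
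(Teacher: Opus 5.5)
Your proof is correct and is essentially the paper's argument. Injectivity is the same $t$-degree argument as in Lemma~\ref{Lem:Ker_1}, and your factorization $t^j-\phi_t^{\,j}=-(\phi_t-t)\sum_{k=0}^{j-1}t^{j-1-k}\phi_t^{\,k}$ is exactly what makes $\sum_i a_iP_i\equiv\sum_i\phi_{a_i}(P_i)$ modulo $(\phi_t-t)E[t]$, i.e.\ the identity \eqref{Eq:Key_Id} behind $\mathscr{K}_1=\mathscr{K}_2$, which the paper only says can be checked directly. The one variation is that you show $\sum_i\phi_{a_i}(P_i)=0$ with the evaluation map $\varepsilon$, whereas the paper uses $(\phi_t-t)E[t]\subset E[t]t$ together with $E[t]=E(L)\oplus E[t]t$; you are also right that the $\mathbb{F}_q$-independence of the $P_i$ is not needed here, since the paper uses it only for $\rho_2$ in Theorem~\ref{Thm:SpPoly}.
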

    Note that the above statement was first formulated in \cite[Remark~3.1.5]{Che23} for the case of Drinfeld modules, and was proved by using the companion $t$-comotives \footnote{Previously in the literature, a $t$-comotive was called a \emph{dual $t$-motive}. However, recent papers use the term $t$-comotive to avoid confusion with the dual of the $t$-motive in the categorical sense.} of Drinfeld modules. The approach adopted in the present paper 
    employ a direct study of the Frobenius difference operator $\phi_t-t$ without using the theory of $t$-comotives.

    With Theorem~\ref{Thm:Intro_Initial} in hand, we successfully transfer the multiplicative dependence problems for $t$-modules $E=(\mathbb{G}_a^d,\phi)$ to solving the inhomogeneous difference equation induced by $\phi_t-t$ and $P_1,\dots,P_\ell$. The second step of our strategy is to determine some essential fundamental results for matrices over the twisted polynomial ring $L[t,\tau]$. In particular, we establish an effective version of the LU decomposition (Theorem~\ref{Thm:Triangularization}) in our non-commutative setting. It then follows that $\mathrm{Sol}_\phi(P_1,\dots,P_\ell)$ can be embedded in the solution space of another difference equation where the difference operator $\phi_t-t$ is replaced by an upper triangular matrix in $\Mat_{d\times d}(L[t,\tau])$. This allows us to perform inductive arguments reducing the situation to the $1$-dimensional case. In particular, we prove that there is an explicitly constructed finite dimensional vector space $V$ over $\mathbb{F}_q$ with effectively computable dimension so that if $(\bm{x},a_1,\dots,a_\ell)\in\mathrm{Sol}_\phi(P_1,\dots,P_\ell)$ with $\bm{x}=(x_1,\dots,x_d)^\tr\in\Mat_{d\times 1}(L[t])$, then the coefficients of $x_i$,  for each $1\leq i\leq d$, as a polynomial in $L[t]$, in fact lie in $V$. After imposing this condition to our difference equation $(\phi_t-t)\bm{x}=\sum_{i=1}^\ell a_iP_i$, we deduce the desired matrix $\mathbb{B}$ and Theorem~\ref{Thm:Main_Thm} follows.
    
    The organization of the paper is as follows. In Section~2, we set up notation and recall the essential background on $t$-modules and divisors of curves defined over $\mathbb{F}_q$. In Section~3.1, we introduce the notion of special polynomials and make the connection between the relation module $\mathcal{R}$ and the solution space $\mathrm{Sol}_\phi(P_1,\dots,P_\ell)$. In Section~3.2, we transport some linear algebra tools such as the LU decomposition to the setting of matrices over twisted polynomial ring in two variables and establish an effective version. In Section~3.3, we demonstrate how to deduce the desired valuation constraint from the $1$-dimensional case. In Section~3.4, we combine the results proved in the previous section to construct the matrix $\mathbb{B}$ stated in Theorem~\ref{Thm:Main_Thm}. Finally, we describe our algorithm in Section~4 and provide some concrete examples and applications.

\section*{Acknowledgements}
The authors thank O\u{g}uz Gezm\.{i}\c{s}, Sheng-Yang Kevin Ho, and 
Federico Pellarin for helpful questions during the preparation of this paper. The authors also thank Andreas Maurischat for many suggestions that improved the clarity of the paper.  

\section{Notation and Preliminaries} \label{S:Prelim}
\begin{longtable}{p{2.25truecm}@{\hspace{5pt}$=$\hspace{5pt}}p{11truecm}}
$\FF_q$ & finite field with $q$ elements, where $q$ is a positive power of a prime number $p$. \\
$\bA$ & $\FF_q[\theta]$, the polynomial ring in $\theta$ over $\FF_q$. \\
$K$ & $\FF_q(\theta)$, the fraction field of $\bA$. \\
$L$ & a field extension of $K$ with $L\subset\mathbb{C}_\infty$. \\
$M_L$ & the set of places of $L$. \\
$K_\infty$ & $\laurent{\FF_q}{1/\theta}$, the completion of $k$ with respect to $\inorm{\,\cdot\,}$. \\
$\CC_{\infty}$ & the completion of an algebraic closure of $K_\infty$. \\
$\oK$ & the algebraic closure of $K$ inside $\CC_{\infty}$.  \\
\end{longtable}

For a field extension $L$ of $K$ with $L\subset\mathbb{C}_\infty$, we set $\oL$ to be the algebraic closure of $L$ in $\mathbb{C}_\infty$. Consider $L[t,\tau]$, the twisted polynomial ring in two variables subject to the relations $\alpha t=t\alpha$, $t\tau=\tau t$, and $\alpha^q\tau=\tau\alpha$ for $\alpha\in L$. We set 
\[
    \oL(t)(\!(\tau)\!):=\{\sum_{i=i_0}^\infty f_i\tau^{i}\mid f_i\in\oL(t),i_0\in\mathbb{Z}\}
\]
the twisted Laurent series field in $\tau$ with coefficients in $\oL(t)$. Note that $L[t,\tau]$ can be treated naturally as a subring of $\oL(t)(\!(\tau)\!)$. In this way, every non-zero element $f\in L[t,\tau]$ admits an inverse $f^{-1}\in\oL(t)(\!(\tau)\!)$. More precisely, if we denote by $\eta(f)\in\mathbb{Z}_{\geq 0}$ the largest non-negative integer such that $f\in L[t,\tau]\tau^{\eta(f)}$ and
\begin{align*}
    \partial:L[t,\tau]&\to L[t]\\
    f=f_0+f_1\tau+\cdots+f_s\tau^s&\mapsto\partial f:=f_0,
\end{align*}
then we can write $f=g\tau^{\eta(f)}$ for some $g\in L[t,\tau]$ with $g\not\in\Ker\partial$. If we further decompose $g=g_0+g_1\tau+\cdots+g_m\tau^m$ with $g_m\neq 0$, then \[g^{-1}=h=h_0+h_1\tau+\cdots\in\oL(t)(\!(\tau)\!)\] where $h_0=g_0^{-1}\in L(t)$ and for each $i\geq 1$, $h_i$ is defined recursively as
\[
    h_i=-g_0^{-1}\big(\sum_{j=1}^ig_jh_{i-j}^{(j)}\big)\in L(t).
\]
In particular,
\[
    f^{-1}=\tau^{-\eta(f)}g^{-1}=\sum_{i\geq 0}h_i^{\big(-\eta(f)\big)}\tau^{i-\eta(f)}\in\oL(t)(\!(\tau)\!).
\]

\subsection{Anderson \texorpdfstring{$t$}{t}-modules}

 For $d\in\mathbb{Z}_{>0}$, a $d$-dimensional $t$-module defined over $L$ is a pair $E=(\mathbb{G}_a^d,\phi)$ where $\phi$ is an $\mathbb{F}_q$-algebra homomorphism
    \begin{align*}
        \phi:\mathbb{F}_q[t]&\to\Mat_{d\times d}(L[\tau])\\
        a&\mapsto\phi_a
    \end{align*}
    such that $\partial\phi_t-\theta\Id_d\in\Mat_{d\times d}(L)$ is a nilpotent matrix. Here for $B=B_0+B_1\tau+\cdots+B_s\tau^s$ with $B_i\in\Mat_{d\times d}(L)$ we set $\partial B:=B_0\in\Mat_{d\times d}(L)$. 
    It has a companion left $L[t,\tau]$-module $\mathcal{M}_E:=\Mat_{1\times d}(L[\tau])$ called a $t$-motive, where the $L[\tau]$-module structure is given by the natural left multiplication while the $\mathbb{F}_q[t]$-action is uniquely determined by
    \[
        a\cdot\bm{m}:=\bm{m}\phi_a,~\bm\in\mathcal{M}_E.
    \]
    A $t$-module is called \emph{abelian} if $\mathcal{M}_E$ is finitely generated over $L[t]$. 
    There are many fundamental examples of abelian $t$-modules that have provided abundant properties for use in the study of the arithmetic theory for the category of abelian $t$-modules (see, for example, \cite{HJ20,NP24}).
    However, we emphasize that our results presented in this article work for any $t$-module including the non-abelian ones.

    In what follows, we give a few examples of $t$-modules, including an example of a non-abelian $t$-module.
    
    \begin{example}
        For $d=1$, let $\rho:\mathbb{F}_q[t]\to L[\tau]$ be the $1$-dimensional $t$-module uniquely determined by $\rho_t=\theta+\kappa_1\tau+\cdots+\kappa_r\tau^r$ with $r>0$, $\kappa_i\in L$, and $\kappa_r\neq 0$. Then $E=(\mathbb{G}_a,\rho)$ is called a Drinfeld module of rank  $r$. In particular, if $r=1$ and $\kappa_1=1$, then it is called the Carlitz module.
    \end{example}

    \begin{example}
        For $d>1$, consider $[\cdot]_d:\mathbb{F}_q[t]\to\Mat_{d\times d}(K[\tau])$ uniquely determined by
        \begin{align*}
            [t]_d=\begin{pmatrix}
                \theta & 1 & &\\
                 & \ddots & \ddots\\
                & & \theta & 1 \\
                \tau & &  &\theta
            \end{pmatrix}\in\Mat_{d\times d}(K[\tau]).
        \end{align*}
        Then we call $\mathbf{C}^{\otimes d}=(\mathbb{G}_a^d,[\cdot]_d)$ the $d$-th tensor powers of the Carlitz module.
    \end{example}

        \begin{example}\label{E:non-ab}

           The $2$-dimensional $t$-module $(\GG_a^2, \phi)$ uniquely determined by
            \[
                \phi_t:=\begin{pmatrix}
                    \theta+\tau & \tau\\
                    \theta & \theta
                \end{pmatrix} \in \Mat_{2\times 2}(K[\tau])
            \]
            is a non-abelian $t$-module \cite[Example 7.5]{Mau26b}. 
        \end{example}

    By an abuse of notation, we denote by $E(L)=\Mat_{d\times 1}(L)$ the $\mathbb{F}_q[t]$-module of $L$-valued points on the $d$-dimensional $t$-module $E=(\mathbb{G}_a^d,\phi)$ defined over $L$. For $P_1,\dots,P_\ell\in E(L)$ not all zero, the main focus of this paper is the determination of the $\mathbb{F}_q[t]$-module $\mathcal{R}$ of linear relations among $P_1,\dots,P_\ell$ defined in \eqref{Eq:Relation_Module}, which we will call the \emph{relation module}.

\subsection{Background on 
divisors}\label{SS:DivisorsBackground}

   Let $L$ be a finite extension of $K$ that corresponds to a smooth projective curve $X_L$ defined over $\mathbb{F}_q$. We denote by $g_L$ the genus of $X_L$. Let $M_L$ be the set of places of $L$. We denote by $\mathrm{Div}(L)$ the divisor group of $L$. Given $D\in\mathrm{Div}(L)$, we define
    \[
        \mathcal{L}(D):=\{f\in L^\times\mid\mathrm{div}(f)+D\geq 0\}\cup\{0\}, 
    \]
    where 
    \[
        \mathrm{div}(f):= \sum_{v\in M_L}\mathrm{ord}_v(f)\cdot(v)\in\mathrm{Div}(L).
    \]
    For $D=\sum_{v\in M_L}a_v\cdot(v)\in\mathrm{Div}(L)$, we define $\deg(D):=\sum_{v\in M_L}a_v\deg(v)\in\mathbb{Z}$.
    Note that $\deg\big(\mathrm{div}(f)\big)=0$ for any $f\in L^\times$.
    In addition, for any $D_1, D_2\in\mathrm{Div}(L)$ with $f\in\mathcal{L}(D_1)$ and $g\in\mathcal{L}(D_2)$, we have $\mathrm{div}(fg)=\mathrm{div}(f)+\mathrm{div}(g)$, and thus $fg\in\mathcal{L}(D_1+D_2)$. If we write $D_1=\sum_{v\in M_L}a_v\cdot(v)$ and $D_2=\sum_{v\in M_L}b_v\cdot(v)$, then their LCM is defined to be
    \[
        D_1\vee D_2:=\sum_{v\in M_L}\max\{a_v,b_v\}\cdot(v)\in\mathrm{Div}(L).
    \]
    
    For $F:=f_0+f_1t+\cdots+f_\ell t^\ell\in L[t]$, we set
    \[
        \mathrm{ord}_v(F):=\min_{i=0}^\ell\{\mathrm{ord}_v(f_i)\}.
    \]
    Given $B=(B_{ij})\in\Mat_{m\times n}(L[t,\tau])$ with $B_{ij}=B_{ij}^{[0]}+B_{ij}^{[1]}\tau+\cdots+B_{ij}^{[r]}\tau^r$, we set
    \[
        \mathrm{ord}_v(B):=\min_{i,j,k}\{\mathrm{ord}_v(B_{ij}^{[k]})\}
    \]
    and
    \[
        \divisor(B)=\sum_{v\in M_L}\mathrm{ord}_v(B)\cdot(v)\in\Div(L).
    \]
    Following Rosen \cite[page 47]{Ros02}, we decompose $\divisor(B)=\divisor(B)_0-\divisor(B)_\infty$ into the difference of two effective divisors, where
    \[
        \mathrm{div}(B)_0:=\sum_{v\in M_L}\max\{\mathrm{ord}_v(B),0\}\cdot(v)\in\mathrm{Div}(L).
    \]
    is the divisor of zeros and
    \begin{equation}\label{E:divinfty}
        \mathrm{div}(B)_\infty:=\sum_{v\in M_L}-\min\{\mathrm{ord}_v(B),0\}\cdot(v)\in\mathrm{Div}(L)
    \end{equation}
    is the divisor of poles associated to the matrix $B$.
    The height of the divisor $\mathrm{div}(B)_\infty$ is defined as $\mathrm{ht}(B)_\infty:=\deg\big(\mathrm{div}(B)_\infty\big)$.

\section{Linear relations among algebraic points on \texorpdfstring{$t$}{t}-modules}
    Let $L\subset\mathbb{C}_\infty$ be a finite extension of $K$ and $E=(\mathbb{G}_a^d,\phi)$ be a $d$-dimensional $t$-module defined over $L$. 
    Note that there is no additional assumption on $E$. 
    Given finitely many points $P_1,\dots,P_\ell\in E(L)=\Mat_{d\times 1}(L)$, recall their relation module $\mathcal{R}$ defined in \eqref{Eq:Relation_Module}.
    Since $\phi$ is an $\mathbb{F}_q$-algebra homomorphism, for any $(c_1,\dots,c_\ell)\in\mathbb{F}_q^{\oplus\ell}$ we must have
    \[
        c_1P_1+\cdots+c_\ell P_\ell=\phi_{c_1}(P_1)+\cdots+\phi_{c_\ell}(P_\ell).
    \]
    We call $\mathbb{F}_q$-linear relations among $P_1,\dots,P_\ell$ trivial relations on $E$. Throughout this section, we assume that $P_1,\dots,P_\ell$ has no trivial relations. 
    
\subsection{Special polynomials and the relation group}
 
   As defined in \cite{Mau22}, we set
    \[
        E[t]:=E(L)\otimes_{\mathbb{F}_q}\mathbb{F}_q[t]:=\left\{\sum_{i\geq 0}c_it^i\mid c_i\in E(L),~c_i=0~\mathrm{for}~i>\!\!>0\right\}.
    \]
    Then,
    $E[t]$ defines an $\mathbb{F}_q[t]\otimes_{\mathbb{F}_q}\mathbb{F}_q[t]$-module given by 
    \[
        (a\otimes b)\cdot\sum_{i\geq 0}c_it^i:=\sum_{i\geq 0}\phi_a(c_i)t^ib
    \]
    for any $a,b\in\mathbb{F}_q[t]$ and $\sum_{i\geq 0}c_it^i\in E[t]$. Note that the difference operator $\phi_t-t$ can be understood as $t\otimes 1-1\otimes t$, and defines an endomorphism on $E[t]$. 

    The natural exact sequence
    \[
        0\to\Ker(\phi_t-t)\to E[t]\overset{\phi_t-t}{\to}E[t]\to\mathrm{Coker}(\phi_t-t)\to 0
    \]
    provides an $\mathbb{F}_q[t]$-module
    \[
        \mathrm{Coker}(\phi_t-t)\cong E[t]/(\phi_t-t)E[t].
    \]
    By the natural inclusion $E(L)\hookrightarrow E[t]=E(L)\otimes_{\mathbb{F}_q}\mathbb{F}_q[t]$, we can treat $P_1,\dots,P_\ell$ as elements of $E[t]$. We set
    \[
        \mathbb{F}_q[t]\otimes 1:=\mathrm{Span}_{\mathbb{F}_q}\{t^i\otimes 1\mid i\geq 0\}
    \]
    and
    \[
        1\otimes\mathbb{F}_q[t]:=\mathrm{Span}_{\mathbb{F}_q}\{1\otimes t^i\mid i\geq 0\}.
    \]
    They are subrings of $\mathbb{F}_q[t]\otimes_{\mathbb{F}_q}\mathbb{F}_q[t]$ and each of them is isomorphic to $\mathbb{F}_q[t]$. Consider the $\mathbb{F}_q[t]\otimes 1$-module 
    \[
        \mathscr{M}_1:=\mathrm{Span}_{\mathbb{F}_q[t]\otimes 1}\{P_1,\dots,P_\ell\}\subset E[t]
    \]
   and the  $1\otimes\mathbb{F}_q[t]$-module
    \[
        \mathscr{M}_2:=\mathrm{Span}_{1\otimes \mathbb{F}_q[t]}\{P_1,\dots,P_\ell\}\subset E[t].
    \]
One can check directly that they generate the same submodule in $\mathrm{Coker}(\phi_t-t)$, namely
    \begin{equation}\label{Eq:Key_Id}
        \big(\mathscr{M}_1+(\phi_t-t)E[t]\big)/(\phi_t-t)E[t]=\big(\mathscr{M}_2+(\phi_t-t)E[t]\big)/(\phi_t-t)E[t].
    \end{equation}

    Now we consider the short exact sequence of $\mathbb{F}_q[t]$-modules
    \[
        0\to\mathscr{K}_1\to\mathbb{F}_q[t]^{\oplus\ell}\to\big(\mathscr{M}_1+(\phi_t-t)E[t]\big)/(\phi_t-t)E[t]\to 0,
    \]
   where 
    \[
        \mathscr{K}_1=\{(a_1,\dots,a_\ell)\in\mathbb{F}_q[t]^{\oplus\ell}\mid\phi_{a_1}(P_1)+\cdots+\phi_{a_\ell}(P_\ell)\in(\phi_t-t)E[t]\}.
    \]
    The following observation is immediate. Nevertheless, we include a quick proof for completion. 
    \begin{lemma}\label{Lem:Ker_1}
        For $\bm{f}\in E[t]$ with $\bm{f}\neq 0$, we have
        \[
            \deg_t\big((\phi_t-t)\bm{f}\big)=\deg_t(\bm{f})+1.
        \]
        It follows that 
        \[
            (\phi_t-t)E[t]\subset E[t]t
        \] 
        and $\phi_t-t:E[t]\to E[t]$ is an injective map. Moreover, we have
        \[
            \mathscr{K}_1=\mathcal{R}.
        \]
    \end{lemma}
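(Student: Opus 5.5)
Write $\bm{f}=\sum_{i=0}^{n}c_it^i$ with $c_i\in E(L)$ and $c_n\neq 0$, so $\deg_t(\bm{f})=n$. Since $\phi_t$ acts coefficientwise, we get
\[
(\phi_t-t)\bm{f}=\sum_{i=0}^n\phi_t(c_i)t^i-\sum_{i=0}^{n}c_it^{i+1}.
\]
The coefficient of $t^{n+1}$ comes only from the second sum and equals $-c_n\neq 0$. All other terms have degree at most $n$. Hence $\deg_t\big((\phi_t-t)\bm{f}\big)=n+1$. Injectivity is immediate: if $\bm f\neq 0$ then $(\phi_t-t)\bm f$ has degree $n+1\geq 1$, so it is nonzero.

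For the inclusion $(\phi_t-t)E[t]\subset E[t]t$, I would look at the constant coefficient, which is $\phi_t(c_0)$. This is not obviously zero, so the inclusion must be meant for the operator written as $t\otimes1-1\otimes t$ on the relevant submodule, or read as a statement about degrees. Rather than guess, I would use only the degree formula and injectivity below, since these are what the final claim needs.

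For $\mathscr{K}_1=\mathcal{R}$, the inclusion $\mathcal{R}\subset\mathscr{K}_1$ is trivial, because $0\in(\phi_t-t)E[t]$. Conversely, take $(a_1,\dots,a_\ell)\in\mathscr{K}_1$. Then $Q:=\sum_i\phi_{a_i}(P_i)\in E(L)$ is a constant, i.e. has $t$-degree $\leq 0$, and $Q=(\phi_t-t)\bm f$ for some $\bm f\in E[t]$. If $\bm f\neq 0$, the degree formula gives $\deg_t Q\geq 1$, a contradiction. So $\bm f=0$, hence $Q=0$, i.e. $(a_1,\dots,a_\ell)\in\mathcal{R}$.

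The only delicate point is keeping straight that $\phi_{a_i}(P_i)$ is a constant in $E[t]$, via the $(a\otimes1)$-action, rather than $a_iP_i$ with the $1\otimes\mathbb{F}_q[t]$-action. That is what makes the degree argument work. No step beyond the leading-coefficient computation is a real obstacle.
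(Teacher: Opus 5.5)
Your degree computation and injectivity argument are the same as the paper's, and your proof of $\mathscr{K}_1=\mathcal{R}$ is correct. Your doubt about $(\phi_t-t)E[t]\subset E[t]t$ is justified, and you should state it outright rather than look for another reading. Under the paper's own definitions the inclusion is simply false. Here $\phi_t-t$ \emph{is} $t\otimes 1-1\otimes t$, and $E[t]t$ is the summand of elements with zero constant term in $E[t]=E(L)\oplus E[t]t$. The constant term of $(\phi_t-t)\bm{f}$ is $\phi_t(c_0)$, which need not vanish. For instance, for the Carlitz module and $\bm{f}=1$ one gets $(\phi_t-t)(1)=\theta+1-t\notin E[t]t$. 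The paper's proof calls the inclusion ``immediate'' from the degree formula and then deduces $\mathscr{K}_1\subset\mathcal{R}$ from $E(L)\cap E[t]t=\{0\}$. That intermediate step does not hold as written.

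What the degree formula actually gives is $(\phi_t-t)E[t]\cap E(L)=\{0\}$, since every nonzero element of the image has $t$-degree at least $1$. This is exactly what your argument uses. So your route differs from the paper's only at this point, and it is the correct repair: it replaces a false inclusion by the true intersection statement and reaches the same conclusion. Lemma~\ref{Lem:Ker_2} and Theorem~\ref{Thm:SpPoly} only invoke injectivity and $\mathscr{K}_1=\mathcal{R}$, so nothing downstream is affected. The lemma should simply state the intersection property in place of the inclusion.
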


    \begin{proof}
        For $\bm{f}\in E[t]$ with $\bm{f}\neq 0$, we have 
        \[
            \deg_t(\phi_t\bm{f})\leq\deg_t(\bm{f})<\deg_t(\bm{f})+1=\deg_t(t\bm{f}).
        \]
        Thus, the first assertion $\deg_t\big((\phi_t-t)\bm{f}\big)=\deg_t(\bm{f})+1$ now follows. It is now immediate that $(\phi_t-t)E[t]\subset E[t]t$ and that $\phi_t-t$ is injective on $E[t]$.
        
        To prove $\mathscr{K}_1=\mathcal{R}$, we observe that $\mathcal{R}\subset\mathscr{K}_1$ is clear from the definition. Thus, it remains to show that $\mathscr{K}_1\subset\mathcal{R}$. Let $(a_1,\dots,a_\ell)\in\mathbb{F}_q[t]^{\oplus\ell}$. On the one hand, we have
        \[
            \phi_{a_1}(P_1)+\cdots+\phi_{a_\ell}(P_\ell)\in E(L).
        \]
        On the other hand, by the definition of $\mathscr{K}_1$ and the property established in the previous paragraph, we have
        \[
            \phi_{a_1}(P_1)+\cdots+\phi_{a_\ell}(P_\ell)\in(\phi_t-t)E[t]\subset E[t]t.
        \]
        Note that $E[t]$ admits the direct sum decomposition $E[t]=E(L)\oplus E[t]t.$
        It follows that 
        \[
            \phi_{a_1}(P_1)+\cdots+\phi_{a_\ell}(P_\ell)\in E(L)\cap E[t]t=\{0\}.
        \]
        This verifies $\mathscr{K}_1\subset\mathcal{R}$, and hence completes the proof.
    \end{proof}

    We consider another short exact sequence of $\mathbb{F}_q[t]$-modules
    \[
        0\to\mathscr{K}_2\to\mathbb{F}_q[t]^{\oplus\ell}\to\big(\mathscr{M}_2+(\phi_t-t)E[t]\big)/(\phi_t-t)E[t]\to 0,
    \]
  where 
    \[
        \mathscr{K}_2=\{(a_1,\dots,a_\ell)\in\mathbb{F}_q[t]^{\oplus\ell}\mid a_1P_1+\cdots+a_\ell P_\ell\in(\phi_t-t)E[t]\}.
    \]
Define
    \begin{equation}\label{E:Sol(P)}
        \mathrm{Sol}_\phi(P_1,\dots,P_\ell):=\{(\bm{x},a_1,\dots,a_\ell)\in E[t]\times\mathbb{F}_q[t]^{\oplus\ell}\mid(\phi_t-t)\bm{x}=a_1P_1+\cdots+a_\ell P_\ell\}.
    \end{equation}
     We have the following observation.
     
    \begin{lemma}\label{Lem:Ker_2}
        The natural projection
        \begin{align*}
            \mathrm{Sol}_\phi(P_1,\dots,P_\ell)&\to\mathscr{K}_2\\
            (\bm{x},a_1,\dots,a_\ell)&\mapsto(a_1,\dots,a_\ell)
        \end{align*}
        is an isomorphism.
    \end{lemma}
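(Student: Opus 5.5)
The plan is to check four properties of the projection $\pi:(\bm{x},a_1,\dots,a_\ell)\mapsto(a_1,\dots,a_\ell)$ in turn: it lands in $\mathscr{K}_2$, it is $\mathbb{F}_q[t]$-linear, it is surjective, and it is injective. Injectivity will rest on Lemma~\ref{Lem:Ker_1}.

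First I fix the module structures. The $\mathbb{F}_q[t]$-action on $\mathrm{Sol}_\phi(P_1,\dots,P_\ell)$ is through $1\otimes\mathbb{F}_q[t]$: for $b\in\mathbb{F}_q[t]$,
\[
b\cdot(\bm{x},a_1,\dots,a_\ell)=(\bm{x}b,\,ba_1,\dots,ba_\ell).
\]
This is well defined because multiplication by $1\otimes b$ commutes with $\phi_t-t=t\otimes1-1\otimes t$, since $\mathbb{F}_q[t]\otimes\mathbb{F}_q[t]$ is commutative. Hence $(\phi_t-t)(\bm{x}b)=\sum_i a_ibP_i$, and $\mathrm{Sol}_\phi(P_1,\dots,P_\ell)$ is an $\mathbb{F}_q[t]$-submodule of $E[t]\times\mathbb{F}_q[t]^{\oplus\ell}$. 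The map $\pi$ is then clearly $\mathbb{F}_q[t]$-linear.

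Next, $\pi$ lands in $\mathscr{K}_2$ by definition: if $(\bm{x},a_1,\dots,a_\ell)$ is a solution, then $a_1P_1+\cdots+a_\ell P_\ell=(\phi_t-t)\bm{x}\in(\phi_t-t)E[t]$. Surjectivity is equally direct. If $(a_1,\dots,a_\ell)\in\mathscr{K}_2$, then by definition there is $\bm{x}\in E[t]$ with $(\phi_t-t)\bm{x}=\sum a_iP_i$, and $(\bm{x},a_1,\dots,a_\ell)$ is a preimage.

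For injectivity, suppose $\pi(\bm{x},a_1,\dots,a_\ell)=0$, so all $a_i=0$. Then $(\phi_t-t)\bm{x}=0$, and Lemma~\ref{Lem:Ker_1} says $\phi_t-t$ is injective on $E[t]$, so $\bm{x}=0$. This degree argument is the only step with real content, and it is already done in Lemma~\ref{Lem:Ker_1}; the rest is bookkeeping, so I expect no substantial obstacle.
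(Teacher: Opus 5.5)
Your proposal is correct and follows essentially the same route as the paper: well-definedness and surjectivity come straight from the definition of $\mathscr{K}_2$, and injectivity follows from the injectivity of $\phi_t-t$ on $E[t]$ established in Lemma~\ref{Lem:Ker_1}. Your extra check that the $1\otimes\mathbb{F}_q[t]$-action commutes with $\phi_t-t$, which makes the projection $\mathbb{F}_q[t]$-linear, is a detail the paper leaves implicit.
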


    \begin{proof}
        Since the map in question is clearly well-defined and surjective, it remains to verify the injectivity. Note that the kernel of the map is given by the set
        \[
            \{(\bm{x},0,\dots,0)\in E[t]\times\mathbb{F}_q[t]^{\oplus\ell}\mid(\phi_t-t)\bm{x}=0\}\subset\mathrm{Sol}_\phi(P_1,\dots,P_\ell).
        \]
        However, by Lemma~\ref{Lem:Ker_1}, the map $\phi_t-t:E[t]\to E[t]$ is injective. Hence for $\bm{x}\in E[t]$ with $(\phi_t-t)\bm{x}=0$, we must have $\bm{x}=0$. This verifies that the map in question is also injective. The desired result now follows.
    \end{proof}

    In what follows, we introduce the notion of special polynomials for $\phi$ associated to $P_1,\dots,P_\ell$. More precisely, we define
    \[
        \mathfrak{sp}_\phi(P_1,\dots,P_\ell):=\mathscr{M}_2\cap(\phi_t-t)E[t].
    \]
    Now we are ready to prove the main result in this section, which shows that there is a one-to-one correspondence between non-trivial relations among $P_1,\dots,P_\ell$ on $E$ and their non-zero special polynomials.
    
    \begin{theorem}\label{Thm:SpPoly}
        Assume $P_1,\dots,P_\ell$ have no trivial relation in the sense that $P_1,\dots,P_\ell$ are linearly independent over $\mathbb{F}_q$. Then the following assertions hold.
        \begin{enumerate}
            \item We have the following $\mathbb{F}_q[t]$-module isomorphism.
                \begin{align*}
                    \rho_1:\mathrm{Sol}_\phi(P_1,\dots,P_\ell)&\to\mathcal{R}\\
                    (\bm{x},a_1,\dots,a_\ell)&\mapsto(a_1,\dots,a_\ell).
                \end{align*}
            \item We have the following $\mathbb{F}_q[t]$-module isomorphism.
                \begin{align*}
                    \rho_2:\mathrm{Sol}_\phi(P_1,\dots,P_\ell)&\to\mathfrak{sp}_\phi(P_1,\dots,P_\ell)\\
                    (\bm{x},a_1,\dots,a_\ell)&\mapsto(\phi_t-t)\bm{x}.
                \end{align*}
            \item We have the following $\mathbb{F}_q[t]$-module isomorphism.
                \begin{align*}
                    \rho_3:=\rho_2 \circ \rho_1^{-1}:\mathcal{R}&\to\mathfrak{sp}_\phi(P_1,\dots,P_\ell)\\
                    (a_1,\dots,a_\ell)&\mapsto a_1P_1+\cdots+a_\ell P_\ell.
                \end{align*}
        \end{enumerate}
    \end{theorem}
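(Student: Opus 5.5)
The plan is to combine Lemma~\ref{Lem:Ker_1} and Lemma~\ref{Lem:Ker_2} via the identity \eqref{Eq:Key_Id}, with the $\mathbb{F}_q$-linear independence of $P_1,\dots,P_\ell$ entering at exactly one point. Throughout, $\mathrm{Sol}_\phi(P_1,\dots,P_\ell)$ is an $\mathbb{F}_q[t]$-module via $b\cdot(\bm{x},a_1,\dots,a_\ell)=(b\bm{x},ba_1,\dots,ba_\ell)$, where $b$ acts on $E[t]$ as $1\otimes b$. This action commutes with $\phi_t-t$, so all three maps are $\mathbb{F}_q[t]$-linear once they are well defined.

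\emph{Step 1: $\mathscr{K}_1=\mathscr{K}_2$.} The quotients
\[
\big(\mathscr{M}_1+(\phi_t-t)E[t]\big)/(\phi_t-t)E[t]
\quad\text{and}\quad
\big(\mathscr{M}_2+(\phi_t-t)E[t]\big)/(\phi_t-t)E[t]
\]
coincide by \eqref{Eq:Key_Id}. More precisely, for each $a\in\mathbb{F}_q[t]$ and $P\in E(L)$ we have $\phi_a(P)\equiv aP$ modulo $(\phi_t-t)E[t]$. This holds because $a\otimes 1-1\otimes a$ is divisible by $t\otimes1-1\otimes t$ in $\mathbb{F}_q[t]\otimes\mathbb{F}_q[t]$; writing $a\otimes1-1\otimes a=(\phi_t-t)\circ c$ for some $c$ commuting with $\phi_t-t$ gives $\phi_a(P)-aP=(\phi_t-t)(cP)$. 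Consequently, for $(a_1,\dots,a_\ell)\in\mathbb{F}_q[t]^{\oplus\ell}$,
\[
\sum_i\phi_{a_i}(P_i)\in(\phi_t-t)E[t]\iff\sum_i a_iP_i\in(\phi_t-t)E[t],
\]
so $\mathscr{K}_1=\mathscr{K}_2$ as submodules of $\mathbb{F}_q[t]^{\oplus\ell}$. Lemma~\ref{Lem:Ker_1} gives $\mathscr{K}_1=\mathcal{R}$, so Lemma~\ref{Lem:Ker_2} yields that $\rho_1$ is an isomorphism onto $\mathscr{K}_2=\mathcal{R}$. This proves (1), and it does not yet use linear independence.

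\emph{Step 2: $\rho_2$.} For $(\bm{x},a_1,\dots,a_\ell)\in\mathrm{Sol}_\phi(P_1,\dots,P_\ell)$ we have $(\phi_t-t)\bm{x}=\sum_i a_iP_i\in\mathscr{M}_2\cap(\phi_t-t)E[t]$, so $\rho_2$ is well defined. It is surjective: an element of $\mathfrak{sp}_\phi(P_1,\dots,P_\ell)$ has the form $\sum a_iP_i=(\phi_t-t)\bm{x}$, which gives a preimage.

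Injectivity is where independence is needed. Suppose $(\phi_t-t)\bm{x}=0$. Then $\bm{x}=0$ by the injectivity part of Lemma~\ref{Lem:Ker_1}, and $\sum_i a_iP_i=0$ in $E[t]=E(L)\otimes_{\mathbb{F}_q}\mathbb{F}_q[t]$. Comparing coefficients of $t^j$ gives $\sum_i a_{i,j}P_i=0$ in $E(L)$ with $a_{i,j}\in\mathbb{F}_q$. By $\mathbb{F}_q$-linear independence all $a_{i,j}=0$, so every $a_i=0$. I expect this to be the only delicate point: the map $\mathbb{F}_q[t]^{\oplus\ell}\to\mathscr{M}_2$, $(a_i)\mapsto\sum a_iP_i$, is injective precisely because the $P_i$ are $\mathbb{F}_q$-independent and $E[t]$ is a tensor product over $\mathbb{F}_q$.

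\emph{Step 3.} Define $\rho_3:=\rho_2\circ\rho_1^{-1}$, a composite of isomorphisms. To compute it, take $(a_1,\dots,a_\ell)\in\mathcal{R}$. Its preimage under $\rho_1$ is $(\bm{x},a_1,\dots,a_\ell)$ with $(\phi_t-t)\bm{x}=\sum a_iP_i$, so $\rho_3(a_1,\dots,a_\ell)=\sum_i a_iP_i$, as claimed.
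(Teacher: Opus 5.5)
Your proposal is correct and follows the paper's proof: (1) via $\mathscr{K}_1=\mathscr{K}_2=\mathcal{R}$ together with Lemma~\ref{Lem:Ker_1} and Lemma~\ref{Lem:Ker_2}; (2) via surjectivity plus the injectivity of $\phi_t-t$ and the $\mathbb{F}_q$-independence of the $P_i$ in $E[t]=E(L)\otimes_{\mathbb{F}_q}\mathbb{F}_q[t]$; (3) by composing. Your elementwise congruence $\phi_a(P)\equiv aP$ modulo $(\phi_t-t)E[t]$, obtained from the divisibility of $a\otimes 1-1\otimes a$ by $t\otimes 1-1\otimes t$, is exactly what is needed to pass from \eqref{Eq:Key_Id} to the equality $\mathscr{K}_1=\mathscr{K}_2$, a step the paper leaves implicit.
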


    \begin{proof}
        By \eqref{Eq:Key_Id} and Lemma~\ref{Lem:Ker_1}, we have
        \[
            \mathscr{K}_1=\mathscr{K}_2=\mathcal{R}.
        \]
        Then the first assertion follows immediately from Lemma~\ref{Lem:Ker_2}. The map given in the second assertion is clearly well-defined and surjective. To prove the injectivity, we simply notice that $P_1,\dots,P_\ell$ are linearly independent over $\mathbb{F}_q$ by our assumption, as well as $L$ and $\mathbb{F}_q[t]$ are linearly disjoint over $\mathbb{F}_q$. In particular, if
        \[
            a_1P_1+\cdots+a_\ell P_\ell=0,
        \]
        then $a_1=\cdots=a_\ell=0$. The second assertion now follows. Finally, the third assertion is simply a consequence of the first and the second assertions. This completes the proof.
    \end{proof}

\subsection{An effective LU decomposition for non-commutative operators}
    With Theorem~\ref{Thm:SpPoly} in hand, we have reduced our study of the relation module $\mathcal{R}$ to that of the solution space $\mathrm{Sol}_\phi(P_1,\dots,P_\ell)$. One of the key ingredients of our algorithm is to show that if $(\bm{x},a_1,\dots,a_\ell)\in\mathrm{Sol}_\phi(P_1,\dots,P_\ell)$, then $\bm{x}$ must be in a free $\mathbb{F}_q[t]$-module of finite rank. To achieve this goal, we will verify in the next subsection that $\bm{x}$ is also a solution of some inhomogeneous Frobenius difference equation whose coefficient matrix is upper triangular. For our algorithmic purposes, we need to establish an effective procedure to transfer $\phi_t-t$ to a triangular matrix. Thus, the main result to prove in this section is the following statement which can be seen as an LU decomposition for matrices over the two variable non-commutative ring $L[t,\tau]$.
    
    \begin{theorem}\label{Thm:Triangularization}
        Let $B=(B_{ij})\in\Mat_{d\times d}(L[t,\tau])$ with $\deg_\tau(B)=r$ and $\deg_t(B)=u$. Assume that $L$ is a finite extension over $K$ and $\partial B\in\Mat_{d\times d}(L[t])$ is in Jordan canonical form with $\det(\partial B)\neq 0$. Then for each $2\leq k\leq d$ and any effective divisor $D_k\in\mathrm{Div}(L)$ with
        \begin{equation}\label{Eq:Divisor_Bound}
            \deg(D_k)>\max\{2g_L-2,(rk+1)(k-1)\big(u+(rk+1)(k-1)u+1\big)q^{r(k-1)}\mathrm{ht}(B)_\infty+g_L-1\},
        \end{equation}
        there exists a matrix $A_D\in\Mat_{d\times d}(L[t,\tau])$ of the form
        \begin{equation}\label{Eq:Lower_Triangular_A}
            A_D=\begin{pmatrix}
                1 & & & \\
                a_{21} & a_{22} & & \\
                \vdots & & \ddots & \\
                a_{d1} & \cdots & \cdots & a_{dd}
            \end{pmatrix}
        \end{equation}
        such that $A_DB$ is an upper triangular matrix and the following properties hold.
        \begin{enumerate}
            \item For each $2\leq k\leq d$, $a_{kk}\neq 0$.
            \item The diagonal entries of $A_DB$ are all non-zero.
            \item For each $2\leq k\leq d$, we have
            \[
                \max\{\deg_\tau(a_{k1}),\dots,\deg_\tau(a_{kk})\}\leq r(k-1).
            \] 
            \item For each $2\leq k\leq d$, we have
            \[
                \max\{\deg_t(a_{k1}),\dots,\deg_t(a_{kk})\}\leq (rk+1)(k-1)u.
            \]
            \item For each $2\leq k\leq d$, we have $a_{k\ell}\in\big(\mathcal{L}(D_k)\otimes_{\mathbb{F}_q}\mathbb{F}_q[t]\big)[\tau]$ for any $1\leq\ell\leq k$.
        \end{enumerate}
    \end{theorem}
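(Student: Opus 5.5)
We build $A_D$ one row at a time, with row $1$ fixed to $(1,0,\dots,0)$. For row $k\geq 2$ the task is a linear-algebra problem over $\mathbb{F}_q$. We look for a row vector $(a_{k1},\dots,a_{kk},0,\dots,0)$ with entries in $\big(\mathcal{L}(D_k)\otimes_{\mathbb{F}_q}\mathbb{F}_q[t]\big)[\tau]$, $\tau$-degree at most $r(k-1)$ and $t$-degree at most $(rk+1)(k-1)u$, such that
\[
    \sum_{\ell=1}^k a_{k\ell}B_{\ell j}=0 \qquad \text{for } 1\leq j\leq k-1 .
\]
This forces the $k$-th row of $A_DB$ to vanish in columns $1,\dots,k-1$, so $A_DB$ is upper triangular. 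We also need $a_{kk}\neq 0$, and the $(k,k)$ entry $\sum_{\ell}a_{k\ell}B_{\ell k}$ must be nonzero.

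\emph{Dimension count.} Write each $a_{k\ell}=\sum_{i\leq r(k-1)}\sum_{s\leq N}c_{\ell,i,s}t^s\tau^i$ with $N=(rk+1)(k-1)u$ and $c_{\ell,i,s}\in\mathcal{L}(D_k)$. Fix an $\mathbb{F}_q$-basis of $\mathcal{L}(D_k)$; the unknowns are then the $\mathbb{F}_q$-coordinates of the $c_{\ell,i,s}$. By Riemann--Roch, $\dim\mathcal{L}(D_k)=\deg D_k-g_L+1$ because $\deg D_k>2g_L-2$. So the number of unknowns is $k(r(k-1)+1)(N+1)(\deg D_k-g_L+1)$.

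Now consider the products $c\,t^s\tau^i B_{\ell j}$. Since $\tau^i\beta=\beta^{q^i}\tau^i$, each coefficient is a product $c\cdot\beta^{q^i}$ with $\beta$ a coefficient of $B$. It therefore lies in $\mathcal{L}\big(D_k+q^{r(k-1)}\mathrm{div}(B)_\infty\big)$, a space of dimension at most $\deg D_k+q^{r(k-1)}\mathrm{ht}(B)_\infty-g_L+1$ (at most $+g_L$ extra if not yet in the nonspecial range). Each product has $\tau$-degree at most $rk$ and $t$-degree at most $N+u$. The $k-1$ equations therefore take values in an $\mathbb{F}_q$-space of dimension at most
\[
    (k-1)(rk+1)(N+u+1)\big(\deg D_k+q^{r(k-1)}\mathrm{ht}(B)_\infty+O(g_L)\big).
\]
The unknowns carry the factor $k(r(k-1)+1)(N+1)$, while the equations carry $(k-1)(rk+1)(N+u+1)$. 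Since $k(rk-r+1)-(k-1)(rk+1)=1$, the unknowns exceed the equations by roughly $(N+1)\deg D_k$ minus a term of order $(k-1)(rk+1)u\deg D_k$ plus $(rk+1)(k-1)(N+u+1)q^{r(k-1)}\mathrm{ht}(B)_\infty$. The choice $N=(rk+1)(k-1)u$ absorbs the first loss, and the lower bound \eqref{Eq:Divisor_Bound} on $\deg D_k$ is designed to beat the second. This makes the kernel nontrivial and in fact of large dimension, which settles properties (3), (4) and (5). I expect that matching these constants exactly is the fiddly part; the idea is Siegel's lemma adapted to Riemann--Roch spaces.

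\emph{Nondegeneracy.} This is the main obstacle: we must ensure $a_{kk}\neq 0$ and that the new diagonal entry is nonzero. I would argue over the skew field $\overline{L}(t)(\!(\tau)\!)$. The hypothesis $\det\partial B\neq 0$ with $\partial B$ in Jordan form makes $\partial B$ upper triangular with nonzero diagonal. Hence $B$ is invertible over $\overline{L}(t)(\!(\tau)\!)$, as its $\tau^0$-part is invertible. So the first $k$ rows of $B$ are left-linearly independent, and in particular so are their restrictions to the first $k$ columns. Any nonzero solution $(a_{k1},\dots,a_{kk})$ must therefore have $\sum_\ell a_{k\ell}B_{\ell k}\neq 0$, since otherwise a nonzero combination of these rows vanishes on the first $k$ columns. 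The solutions with $a_{kk}=0$ are the left relations among the first $k-1$ rows restricted to the first $k-1$ columns. By the same invertibility argument applied to the $(k-1)\times(k-1)$ leading block, those relations are trivial: the block is triangular with invertible diagonal modulo $\tau$, hence invertible. So every nonzero solution has $a_{kk}\neq 0$, which gives properties (1) and (2).

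The effective bound therefore only needs to produce one nonzero solution. Assembling the rows for $k=2,\dots,d$ gives $A_D$.
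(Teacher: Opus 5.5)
Your proposal is correct and is essentially the paper's proof. Your single dimension count over $\mathbb{F}_q$ is exactly Lemma~\ref{L:t_degree_bound} composed with Lemma~\ref{Lem:v_adic_lower_bound}. Your nondegeneracy argument rests, as in Lemmas~\ref{Lem:Non_Zero} and~\ref{Lem:Jordan_Form}, on the leading principal blocks of $B$ having invertible $\tau^0$-part. Your direct treatment of the $(k,k)$ entry (a nonzero row vector times an invertible block is nonzero) is a cleaner substitute for the paper's Lemma~\ref{Lem:Non_Zero_Diagonal}.

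The constants you were unsure about close exactly, with no $O(g_L)$ slack. Since $\deg\big(D_k+q^{r(k-1)}\mathrm{div}(B)_\infty\big)\geq\deg D_k>2g_L-2$, Riemann--Roch is an equality for both the unknown and the equation spaces. With $N=(rk+1)(k-1)u$, the surplus of unknowns over equations is exactly $\deg D_k-g_L+1-(rk+1)(k-1)\big(u+(rk+1)(k-1)u+1\big)q^{r(k-1)}\mathrm{ht}(B)_\infty$, which is positive precisely by \eqref{Eq:Divisor_Bound}.

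One step needs fixing. Left-independence of the full first $k$ rows of $B$ does not imply independence of their restrictions to the first $k$ columns, since truncation can create dependencies. Instead, apply your $(k-1)$-block argument directly to the leading $k\times k$ block. Its $\tau^0$-part is the leading principal $k\times k$ block of $\partial B$, which is triangular with nonzero diagonal and hence invertible. This also covers the omitted entry $(A_DB)_{11}=B_{11}\neq 0$.
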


    We will prove Theorem~\ref{Thm:Triangularization} by using several lemmas. Let $A$ be a matrix of the form \eqref{Eq:Lower_Triangular_A} such that $AB$ is an upper triangular matrix. Then it is equivalent to saying that the $(k,j)$-entry of $AB$ is equal to zero for each $2\leq k\leq d$ and $1\leq j<k$. Thus, for each $2\leq k\leq d$, entries $a_{k\ell}$ of $A$ with $\ell\leq k$ satisfy the following system of equations.
    \begin{equation}\label{Eq:Key_system}
        (\mathbb{E}_k):=\left\{\begin{array}{ccc}
            a_{k1}B_{11}+\cdots+a_{kk}B_{k1}&=&0\\
            a_{k1}B_{12}+\cdots+a_{kk}B_{k2}&=&0\\
            \vdots\\
            a_{k1}B_{1(k-1)}+\cdots+a_{kk}B_{k(k-1)}&=&0
      \end{array}\right.
        \end{equation}
    Here we refer to the system of equations coming from the $k$-th row of $AB$ as $(\mathbb{E}_k)$. Note that for $k_1\neq k_2$, the system of equations $(\mathbb{E}_{k_1})$ and $(\mathbb{E}_{k_2})$ are independent, in the sense that they do not share any common indeterminate $a_{k\ell}$.

    The first observation we have is the following lemma, which shows that any non-zero solution of \eqref{Eq:Key_system} provides the $k$-th row of $A$ with $a_{kk}\neq 0$. This will take care of Theorem~\ref{Thm:Triangularization}(1).

    \begin{lemma}\label{Lem:Non_Zero}
        Let $B=(B_{ij})\in\Mat_{d\times d}(L[t,\tau])$ with $\deg_\tau(B)=r$. Assume that $\partial B\in\Mat_{d\times d}(L[t])$ is in the Jordan canonical form with $\det(\partial B)\neq 0$, and there is a matrix $A\in\Mat_{d\times d}(L[t,\tau])$ of the form \eqref{Eq:Lower_Triangular_A} so that $AB$ is an upper triangular matrix. For any $2\leq k\leq d$, if the $k$-th row of $A$ is not identically zero, then we must have $a_{kk}\neq 0$.
    \end{lemma}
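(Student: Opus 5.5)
We argue by contradiction, using only the system $(\mathbb{E}_k)$ in \eqref{Eq:Key_system} and the constant-term map $\partial$. Fix $2\leq k\leq d$ and suppose $a_{kk}=0$ while $(a_{k1},\dots,a_{k(k-1)})$ is not identically zero. Then $(\mathbb{E}_k)$ becomes
\[
    \sum_{\ell=1}^{k-1}a_{k\ell}B_{\ell j}=0,\qquad 1\leq j\leq k-1.
\]
In other words, the nonzero row vector $\bm{a}:=(a_{k1},\dots,a_{k(k-1)})\in\Mat_{1\times(k-1)}(L[t,\tau])$ satisfies $\bm{a}B'=0$. Here $B'$ is the upper-left $(k-1)\times(k-1)$ block of $B$. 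So it suffices to show that $B'$ has no nonzero left annihilator in $\Mat_{1\times(k-1)}(L[t,\tau])$.

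The key input is that $\partial B$ is in Jordan form with $\det(\partial B)\neq0$. Since Jordan form is block upper triangular with Jordan blocks along the diagonal, $\partial B'$ is the upper-left block of a triangular matrix with nonzero diagonal. Hence $\partial B'$ is upper triangular with nonzero diagonal entries, and so $\det(\partial B')\neq 0$ in $L[t]$.

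Write $\bm{a}=\sum_{i\geq0}\bm{a}_i\tau^i$ with $\bm{a}_i\in\Mat_{1\times(k-1)}(L[t])$. Let $s$ be the least index with $\bm{a}_s\neq 0$, and write $B'=\sum_{j\geq0}B'_j\tau^j$. The coefficient of $\tau^s$ in $\bm{a}B'$ is $\bm{a}_s(B'_0)^{(s)}$, since $\tau^iB'_j=(B'_j)^{(i)}\tau^{i+j}$ and only $i=s$, $j=0$ contribute to degree $s$. From $\bm{a}B'=0$ we get
\[
    \bm{a}_s\,(\partial B')^{(s)}=0.
\]
The twist preserves nonvanishing, because $\det\big((\partial B')^{(s)}\big)=\det(\partial B')^{(s)}\neq0$ and Frobenius twisting is injective on $L[t]$. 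Therefore $(\partial B')^{(s)}$ is invertible over $L(t)$, which forces $\bm{a}_s=0$. This contradicts the choice of $s$, so $a_{kk}\neq 0$.

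The only point needing care is the second step: the diagonal of a Jordan form consists of the eigenvalues, so $\det(\partial B)\neq0$ means each diagonal entry is nonzero. Truncating to the upper-left $(k-1)\times(k-1)$ block keeps it upper triangular with those same nonzero diagonal entries. Everything else is routine bookkeeping with the commutation rule $\tau\alpha=\alpha^q\tau$.
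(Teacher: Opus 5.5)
Your proposal is correct and is essentially the paper's proof. In both, assuming $a_{kk}=0$, the row $(a_{k1},\dots,a_{k(k-1)})$ left-annihilates the leading $(k-1)\times(k-1)$ block $B'$, and comparing lowest-order $\tau$-coefficients gives $\bm{a}_s(\partial B')^{(s)}=0$; this forces $\bm{a}_s=0$ because the twisted truncated Jordan block is invertible over $L(t)$. Your remark that this block is triangular with nonzero diagonal is just a slightly more explicit version of the paper's observation that the principal submatrix is again in Jordan form with nonzero determinant.
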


    \begin{proof}
        Suppose on the contrary that there exists $2\leq k\leq d$ such that the $k$-th row of $A$ is not identically zero, but $a_{kk}=0$. Assume that $n_0\geq 0$ is the non-negative integer such that $a_{k\ell}\in L[t,\tau]\tau^{n_0}$ for each $1\leq\ell\leq k$ but $a_{k\ell}\not\in L[t,\tau]\tau^{n_0+1}$ for some $1\leq\ell\leq k$. Then, the $k$-th row of $A$ gives a non-zero solution of the following system of equations        
        \begin{equation}\label{Eq:Key_system'}
            (\mathbb{E}'_k):= \left\{\begin{array}{ccc}
                a_{k1}B_{11}+\cdots+a_{k(k-1)}B_{(k-1)1}&=&0\\
                a_{k1}B_{12}+\cdots+a_{k(k-1)}B_{(k-1)2}&=&0\\
                \vdots\\
                a_{k1}B_{1(k-1)}+\cdots+a_{k(k-1)}B_{(k-1)(k-1)}&=&0
                \end{array}\right.
        \end{equation}
        We express
        \[
            a_{k\ell}=a_{k\ell}^{[n_0]}\tau^{n_0}+\cdots+a_{k\ell}^{[s_k]}\tau^{s_k}\in L[t][\tau],~1\leq\ell\leq k
        \]
        and
        \[
            B_{ij}=B_{ij}^{[0]}+\cdots+B_{ij}^{[r]}\tau^r\in L[t][\tau],~1\leq i,j\leq d.
        \]
        By extracting the $\tau^{n_0}$-terms of \eqref{Eq:Key_system'}, we get
        \begin{equation}\label{Eq:Key_system''}
        \left\{\begin{array}{ccc}
                a_{k1}^{[n_0]}(B_{11}^{[0]})^{q^{n_0}}+\cdots+a_{k(k-1)}^{[n_0]}(B_{(k-1)1}^{[0]})^{q^{n_0}}&=&0\\
                a_{k1}^{[n_0]}(B_{12}^{[0]})^{q^{n_0}}+\cdots+a_{k(k-1)}^{[n_0]}(B_{(k-1)2}^{[0]})^{q^{n_0}}&=&0\\
               \vdots\\
                a_{k1}^{[n_0]}(B_{1(k-1)}^{[0]})^{q^{n_0}}+\cdots+a_{k(k-1)}^{[n_0]}(B_{(k-1)(k-1)}^{[0]})^{q^{n_0}}&=& 0
            \end{array}\right.
        \end{equation}
        It is equivalent to
        \begin{equation}
            (a_{k1}^{[n_0]},\dots,a_{k(k-1)}^{[n_0]})\begin{pmatrix}
                B_{11}^{[0]} & \cdots & B_{1(k-1)}^{[0]}\\
                \vdots & & \vdots\\
                B_{(k-1)1}^{[0]} & \cdots & B_{(k-1)(k-1)}^{[0]}
            \end{pmatrix}^{(n_0)}=(0,\dots,0).
        \end{equation}
        Since $\partial B\in\Mat_{d\times d}(L[t])$ is in the Jordan canonical form with $\det(\partial B)\neq 0$, 
        its principal submatrix of size $(k-1)$ by $(k-1)$ is again in the Jordan canonical form with non-zero determinant. 
        It follows that 
        \[
            \begin{pmatrix}
                B_{11}^{[0]} & \cdots & B_{1(k-1)}^{[0]}\\
                \vdots & & \vdots\\
                B_{(k-1)1}^{[0]} & \cdots & B_{(k-1)(k-1)}^{[0]}
            \end{pmatrix}^{(n_0)}\in\Mat_{(k-1)\times (k-1)}(L[t])\cap\GL_{(k-1)\times (k-1)}(L(t)),
        \]
        and hence $(a_{k1}^{[n_0]},\dots,a_{k(k-1)}^{[n_0]})=(0,\dots,0)$. This implies that $a_{k\ell}\in L[t,\tau]\tau^{n_0+1}$, which violates our assumption $a_{k\ell}\not\in L[t,\tau]\tau^{n_0+1}$. The desired result now follows.
    \end{proof}

    In what follows, we will present some lemmas which can be used to deal with Theorem~\ref{Thm:Triangularization}(2). The conclusions are obvious for matrices over a commutative ring, and the ideas can be adjusted to our non-commutative situation. To be self-contained, we give an elementary explanation without using the notion of determinant. For a square matrix $B\in\Mat_d\big(\oL(t)(\!(\tau)\!)\big)$ and $1\leq i\leq d$, the \emph{leading principal submatrix of $B$ of order $i$} is the upper-left submatrix of $B$ obtained by deleting the last $(d-i)$ rows and columns of $B$.

\begin{lemma}\label{Lem:Non_Zero_Diagonal}
    Let $A,B,C\in\Mat_{d\times d}\big(\oL(t)(\!(\tau)\!)\big)$ be three square matrices of size $d>0$. Assume that
    \begin{enumerate}
        \item $A$ is a lower triangular matrix.
        \item For each $1\leq i\leq d$ and $1\leq j\leq i$, if we denote by $\bv_{i,j}$ the $j$-th row of the leading principal submatrix of $B$ of order $i$, then
        \[
            \bv_{i,i}\not\in\mathrm{Span}_{\oL(t)(\!(\tau)\!)}\{\bv_{i,1},\dots,\bv_{i,i-1}\}.
        \]
        \item $C=AB$ is an upper triangular matrix.
    \end{enumerate}
    Then if the diagonal entries of $A$ are all non-zero, then the diagonal entries of $C$ are all non-zero.
\end{lemma}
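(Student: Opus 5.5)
We argue by contradiction, working over the skew field $\oL(t)(\!(\tau)\!)$. Every nonzero element there is invertible (as recalled in \S2), so linear dependence of row vectors behaves exactly as over a commutative field, provided scalars always multiply on the left.

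Suppose some diagonal entry of $C$ vanishes, and let $i$ be any index with $C_{ii}=0$. Since $A$ and $C$ are triangular, we can restrict to the leading principal submatrices of order $i$. Write $A_i$, $B_i$, $C_i$ for the leading principal submatrices of order $i$ of $A$, $B$, $C$. Because $A$ is lower triangular, the $(k,j)$-entry of $AB$ for $k,j\le i$ is
\[
\sum_{\ell\le k}a_{k\ell}B_{\ell j}.
\]
This only involves $B_{\ell j}$ with $\ell,j\le i$, so $C_i=A_iB_i$. In particular, the $i$-th row of $C_i$ equals
\[
\sum_{\ell=1}^{i}a_{i\ell}\bv_{i,\ell}.
\]
Since $C$ is upper triangular, the first $i-1$ entries of this row are $0$. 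The hypothesis $C_{ii}=0$ makes the last entry $0$ as well, so
\[
\sum_{\ell=1}^{i}a_{i\ell}\bv_{i,\ell}=0.
\]
The diagonal entry $a_{ii}\neq0$ is invertible, so left-multiplying by $a_{ii}^{-1}$ gives
\[
\bv_{i,i}=-\sum_{\ell<i}a_{ii}^{-1}a_{i\ell}\bv_{i,\ell}\in\mathrm{Span}\{\bv_{i,1},\dots,\bv_{i,i-1}\}.
\]
This contradicts hypothesis (2). For $i=1$ the span is $\{0\}$, and the same argument gives $\bv_{1,1}=0$, again contradicting (2).

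The argument is short. The one point that needs care, and so the main obstacle, is non-commutativity. I would check explicitly that the span in hypothesis (2) means the left span, $\{\sum c_\ell\bv_{i,\ell}\}$ with scalars multiplying on the left, which matches how $A$ acts on the rows of $B$ in $AB$. I would also check that the inverse $a_{ii}^{-1}$ exists and multiplies on the left, which is exactly the inversion in $\oL(t)(\!(\tau)\!)$ described in \S2. No determinant is needed, consistent with the paper's remark.
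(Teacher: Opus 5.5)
Your proof is correct, and it is more direct than the paper's. The paper also assumes $C_{ii}=0$ and contradicts hypothesis (2), but it first normalizes $A$ as $\mathrm{diag}(A_{11}^{-1},\dots,A_{dd}^{-1})A=\mathbb{I}_d+N$ with $N$ strictly lower triangular. It inverts $\mathbb{I}_d+N$ by the finite series $\mathbb{I}_d-N+\cdots+(-1)^{d-1}N^{d-1}$ and writes $B=(\mathbb{I}_d+N)^{-1}\mathrm{diag}(A_{11}^{-1},\dots,A_{dd}^{-1})\,C$. The leading principal submatrix of $B$ of order $i$ is then a unit lower triangular matrix times an upper triangular matrix with vanishing last row. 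From this the paper reads off that $\bv_{i,i}$ lies in the span of $\bv_{i,1},\dots,\bv_{i,i-1}$; that step tacitly uses invertibility of the unit lower triangular block of order $i-1$, which effectively undoes the earlier inversion. You instead read the $i$-th row of $C=AB$ directly as the left combination $\sum_{\ell\le i}a_{i\ell}\bv_{i,\ell}$ and divide on the left by $a_{ii}$. This avoids inverting $A$ and gives a sharper, index-by-index statement: $C_{ii}\neq 0$ follows from $a_{ii}\neq 0$ and hypothesis (2) for that single $i$. The paper's argument, by contrast, needs all diagonal entries of $A$ to be nonzero at once. The paper's route mainly buys an explicit factorization of $B$ as (unit lower triangular)$\cdot$(diagonal)$\cdot$(upper triangular), in keeping with the LU theme of the subsection, but the lemma does not need it. Your care about left spans is consistent with how the paper uses the span, both in its own proof and in Lemma~\ref{Lem:Jordan_Form}.
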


\begin{proof}
    Suppose on the contrary that $C_{ii}=0$ for some $1\leq i\leq d$. We claim that this will imply
    \[
        \bv_{i,i}\in\mathrm{Span}_{\oL(t)(\!(\tau)\!)}\{\bv_{i,1},\dots,\bv_{i,i-1}\},
    \]
    which will lead to the desired contradiction.

    To prove the above claim, we first note that since $A_{ii}\neq 0$ for each $1\leq i\leq d$, it follows that $A_{ii}^{-1}\in\oL(t)(\!(\tau)\!)$ exists. Then the matrix
    \[
        N:=\begin{pmatrix}
            A_{11}^{-1} & & \\
             & \ddots & \\
             & & A_{dd}^{-1}
        \end{pmatrix}A-\mathbb{I}_d\in\Mat_d\big(\oL(t)(\!(\tau)\!)\big)
    \]
    is a lower triangular nilpotent matrix. In particular, $N^d$ is the zero matrix. Thus, we have an explicit formula for the inverse of the matrix
    \begin{equation}\label{Eq:Inverse_Formula}
        (\mathbb{I}_d+N)^{-1}=\mathbb{I}_d-N+N^2-N^3+\cdots+(-1)^{d-1}N^{d-1}\in\Mat_d\big(\oL(t)(\!(\tau)\!)\big).
    \end{equation}
    Hence,
    \begin{align}\label{E:BwNAB}
        B&=(\mathbb{I}_d+N)^{-1}\begin{pmatrix}
            A_{11}^{-1} & & \\
             & \ddots & \\
             & & A_{dd}^{-1}
        \end{pmatrix}AB\\
      \notag  &=(\mathbb{I}_d-N+N^2-N^3+\cdots+(-1)^{d-1}N^{d-1})\begin{pmatrix}
            A_{11}^{-1} & & \\
             & \ddots & \\
             & & A_{dd}^{-1}
        \end{pmatrix}C
    \end{align}
    Since $C$ is an upper triangular matrix with $C_{ii}=0$, the leading principal submatrix of
    \[
        \begin{pmatrix}
        A_{11}^{-1} & & \\
         & \ddots & \\
         & & A_{dd}^{-1}
    \end{pmatrix}C
    \]
    order $i$ is of the form 
    \[
        \begin{pmatrix}
        \star & \cdots & \cdots & \star \\
         & \ddots & & \vdots\\
         & & \star & \star\\
         & & & 0
    \end{pmatrix} \in \Mat_i\big(\oL(t)(\!(\tau)\!)\big).
    \]

    On the other hand, since $N$ is a lower triangular matrix, by the explicit formula \eqref{Eq:Inverse_Formula}, the matrix $(\mathbb{I}_d+N)^{-1}$ is also lower triangular. Its leading principal submatrix of order $i$ is of the form 
    \[
        \begin{pmatrix}
            1 & & & \\
            \star & 1 & & \\
            \vdots & & \ddots & \\
            \star & \cdots & \cdots & 1
        \end{pmatrix} \in \Mat_i\big(\oL(t)(\!(\tau)\!)\big).
    \]
    In particular, by \eqref{E:BwNAB}, the leading principal submatrix of $B$ of order $i$ is of the form
    \[
        \begin{pmatrix}
            1 & & & \\
            \star & 1 & & \\
            \vdots & & \ddots & \\
            \star & \cdots & \cdots & 1
        \end{pmatrix}\begin{pmatrix}
        \star & \cdots & \cdots & \star \\
         & \ddots & & \vdots\\
         & & \star & \star\\
         & & & 0
    \end{pmatrix} \in \Mat_i\big(\oL(t)(\!(\tau)\!)\big)
    \]
    which implies that
    \[
        \bv_{i,i}\in\mathrm{Span}_{\oL(t)(\!(\tau)\!)}\{\bv_{i,1},\dots,\bv_{i,i-1}\}.
    \]
    This gives the asserted claim and hence the desired contradiction.
\end{proof}

    The following lemma shows that the second assumption of Lemma~\ref{Lem:Non_Zero_Diagonal} concerning $B$ is fulfilled under the setting of Theorem~\ref{Thm:Triangularization}.

    \begin{lemma}\label{Lem:Jordan_Form}
        Let $B\in\Mat_d(L[t,\tau])$ such that $\partial B$ is in Jordan canonical form and $\det(\partial B)\neq 0$. For each $1\leq i\leq d$ and $1\leq j\leq i$, if we denote by $\bv_{i,j}$ the $j$-th row of the leading principal submatrix of $B$ of order $i$, then
        \[
            \{\bv_{i,1},\dots,\bv_{i,i}\}
        \]
        is an $\oL(t)(\!(\tau)\!)$-linearly independent set. In particular,
        \[
            \bv_{i,i}\not\in\mathrm{Span}_{\oL(t)(\!(\tau)\!)}\{\bv_{i,1},\dots,\bv_{i,i-1}\}.
        \]
    \end{lemma}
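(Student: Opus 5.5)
The plan is to argue by contradiction and reduce to the constant-term part $\partial B$, in the same way as the proof of Lemma~\ref{Lem:Non_Zero}. Fix $1\leq i\leq d$ and suppose there are $c_1,\dots,c_i\in\oL(t)(\!(\tau)\!)$, not all zero, with $c_1\bv_{i,1}+\cdots+c_i\bv_{i,i}=0$.

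\textbf{Normalization.} Each $c_j$ is a Laurent series in $\tau$ with finitely many negative powers. Let $n_0\in\mathbb{Z}$ be the smallest exponent of $\tau$ that occurs in any of the $c_j$. Write
\[
c_j=c_j^{[n_0]}\tau^{n_0}+(\text{higher powers of }\tau),
\]
so that the coefficient vector $(c_1^{[n_0]},\dots,c_i^{[n_0]})\in\oL(t)^{\oplus i}$ is non-zero. Since $\oL(t)(\!(\tau)\!)$ is a field (skew), there is no need to clear denominators: we can compare coefficients of $\tau^{n_0}$ directly.

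\textbf{Extracting the lowest term.} Every entry of $\bv_{i,j}$ lies in $L[t][\tau]$, and its $\tau^0$-coefficient is the corresponding entry of $\partial B$. The relation $\tau^{n_0}b=b^{(n_0)}\tau^{n_0}$, extended to negative $n_0$ through $\oL$, is what allows the comparison. Taking the $\tau^{n_0}$-coefficient of the relation gives
\[
(c_1^{[n_0]},\dots,c_i^{[n_0]})\,\big(\partial B_{\leq i}\big)^{(n_0)}=0,
\]
where $\partial B_{\leq i}$ is the leading principal submatrix of $\partial B$ of order $i$.

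\textbf{Conclusion.} Because $\partial B$ is upper triangular (Jordan form), its leading principal submatrix of order $i$ is again upper triangular with non-zero diagonal entries. These diagonal entries are among those of $\partial B$, whose product is $\det(\partial B)\neq 0$. Twisting by $n_0$, which is a field automorphism on coefficients, preserves triangularity and the non-vanishing of the diagonal. Hence $(\partial B_{\leq i})^{(n_0)}$ is invertible over $\oL(t)$. This forces $(c_1^{[n_0]},\dots,c_i^{[n_0]})=0$, contradicting the choice of $n_0$. So $\{\bv_{i,1},\dots,\bv_{i,i}\}$ is linearly independent, and the second assertion follows immediately.

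\textbf{Main obstacle.} The only delicate step is justifying the extraction of the lowest term. One must check that multiplying a Laurent series $c$ with lowest term $c^{[n_0]}\tau^{n_0}$ on the left of $b\in L[t][\tau]$ produces lowest coefficient $c^{[n_0]}(\partial b)^{(n_0)}$ at $\tau^{n_0}$. Here the twist $(\cdot)^{(n_0)}$ for negative $n_0$ must be interpreted via $q$-th roots in $\oL$, which exist because $\oL$ is algebraically closed and hence perfect.
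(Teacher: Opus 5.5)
Your proof is correct and is essentially the paper's argument: take a nontrivial left relation, isolate the lowest-order $\tau$-coefficients, and use that the leading principal submatrix of $\partial B$ is invertible over $\oL(t)$. The only cosmetic difference is that the paper first left-multiplies the relation by a power of $\tau$ so the coefficients lie in $\oL(t)\llbracket\tau\rrbracket$ with constant terms not all zero, so no twist appears; you instead extract the $\tau^{n_0}$-coefficient directly and work with $(\partial B_{\leq i})^{(n_0)}$, exactly as in the proof of Lemma~\ref{Lem:Non_Zero}.
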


    \begin{proof}
        For each $1\leq i\leq d$, since $\partial B$ is in Jordan canonical form and $\det(\partial B)\neq 0$, if we denote by $B_i$ the $i$-th principal submatrix of $B$, then $\partial B_i$ is again in Jordan canonical form and $\det(\partial B_i)\neq 0$.

        Now we assume that there are $a_1,\dots,a_i\in\oL(t)(\!(\tau)\!)$, not all zero, such that
        \begin{equation}\label{Eq:Lttau_relation}
            a_1\bv_{i,1}+\cdots+a_i\bv_{i,i}=0.
        \end{equation}
        Without loss of generality, we may assume that $a_1,\dots,a_i\in\oL(t)\llbracket\tau\rrbracket$ and $\partial(a_1),\dots,\partial(a_i)\in\oL(t)$ are not all zero. Then, by\eqref{Eq:Lttau_relation}, we have 
        \[
            \big(\partial(a_1),\dots,\partial(a_i)\big)\partial B_i=0.
        \]
        Since $\partial B_i$ is invertible, we must have that $\partial(a_1),\dots,\partial(a_i)$ are all zero. This leads to a contradiction.
    \end{proof}

    Given $\mathfrak{B}=(\mathfrak{B}_{ij})\in\Mat_{m\times n}(L)$, we define
    \[
        \mathrm{den}(\mathfrak{B}):=\{v\in M_L\mid\mathrm{ord}_v(\mathfrak{B}_{ij})<0~\mathrm{for~some}~i,j\}\subset M_L.
    \]
    Recall from \S\ref{SS:DivisorsBackground} that $\mathrm{ord}_v(\mathfrak{B})=\min_{i,j}\{\mathrm{ord}_v(\mathfrak{B}_{ij})\}$
    and \eqref{E:divinfty} the divisor of poles
    \[
        D_{\mathfrak{B}}:=\sum_{v\in M_L}-\min\{\mathrm{ord}_v(\mathfrak{B}),0\}\cdot(v)=\sum_{v\in\mathrm{den}(\mathfrak{B})}\big(-\mathrm{ord}_v(\mathfrak{B})\big)\cdot(v)\in\mathrm{Div}(L).
    \]
    Our next lemma will be used to deal with the lower bound of the $v$-adic valuation stated in Theorem~\ref{Thm:Triangularization}(5).

    \begin{lemma}\label{Lem:v_adic_lower_bound}
        Let $\mathfrak{B}=(\mathfrak{B}_{ij})\in\Mat_{m\times n}(L)$ with $m<n$ and $D\in\mathrm{Div}(L)$ be an effective divisor with $\deg(D)>\max\{2g_L-2,\frac{m}{n-m}\deg(D_\mathfrak{B})+g_L-1\}$. Then we have
        \[
            \{0\}\subsetneq\Ker(\mathfrak{B})\cap\mathcal{L}(D)^n.
        \]
    \end{lemma}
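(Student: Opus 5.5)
The plan is a Siegel-type counting argument over $\mathbb{F}_q$, using Riemann--Roch to control dimensions. Write $D_{\mathfrak{B}}$ for the pole divisor of $\mathfrak{B}$. For any $\bx=(x_1,\dots,x_n)^\tr\in\mathcal{L}(D)^n$ and any row index $i$, each term satisfies
\[
\divisor(\mathfrak{B}_{ij}x_j)+D+D_{\mathfrak{B}}\geq 0,
\]
since $\mathrm{ord}_v(\mathfrak{B}_{ij})\geq -\mathrm{ord}_v$-coefficient of $D_{\mathfrak{B}}$ at $v$. Hence the $\mathbb{F}_q$-linear map
\[
\mathfrak{B}:\mathcal{L}(D)^n\to\mathcal{L}(D+D_{\mathfrak{B}})^m
\]
is well defined. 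It therefore suffices to show
\[
n\dim_{\mathbb{F}_q}\mathcal{L}(D)>m\dim_{\mathbb{F}_q}\mathcal{L}(D+D_{\mathfrak{B}}),
\]
because the kernel of an $\mathbb{F}_q$-linear map from a larger to a smaller finite-dimensional space is nonzero. This gives a nonzero element of $\Ker(\mathfrak{B})\cap\mathcal{L}(D)^n$, and $\{0\}$ is trivially contained in it.

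To compare the dimensions, use Riemann--Roch. Since $\deg(D)>2g_L-2$ and $D+D_{\mathfrak{B}}\geq D$, both divisors have degree $>2g_L-2$. So
\[
\dim\mathcal{L}(D)=\deg(D)-g_L+1,\qquad \dim\mathcal{L}(D+D_{\mathfrak{B}})=\deg(D)+\deg(D_{\mathfrak{B}})-g_L+1.
\]
Dimensions here are over $\mathbb{F}_q$, which we take to be the full constant field of $L$ (the curve $X_L$ is over $\mathbb{F}_q$). If the constant field were larger, the same argument works with degrees measured relative to it.

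The required inequality then becomes
\[
n(\deg D-g_L+1)>m(\deg D+\deg D_{\mathfrak{B}}-g_L+1),
\]
that is,
\[
(n-m)(\deg D-g_L+1)>m\deg D_{\mathfrak{B}}.
\]
Since $n>m$, this is equivalent to $\deg D>\frac{m}{n-m}\deg(D_{\mathfrak{B}})+g_L-1$, which is exactly the hypothesis.

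The only delicate points are the well-definedness of the target space, namely that multiplying by $\mathfrak{B}$ moves $\mathcal{L}(D)$ into $\mathcal{L}(D+D_{\mathfrak{B}})$, and the application of Riemann--Roch in the range $\deg>2g_L-2$. Both are routine, so I expect no serious obstacle. The main care needed is to state the constant-field convention correctly so that the $\mathbb{F}_q$-dimension counts are valid.
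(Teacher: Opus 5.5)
Your proposal is correct and follows the same route as the paper. The paper also shows that $\mathfrak{B}$ maps $\mathcal{L}(D)^n$ into $\mathcal{L}(D+D_{\mathfrak{B}})^m$; it does so by expanding in fixed $\mathbb{F}_q$-bases, which gives an explicit $\mathbb{F}_q$-linear system with $n\dim\mathcal{L}(D)$ unknowns and $m\dim\mathcal{L}(D+D_{\mathfrak{B}})$ equations, and then compares dimensions via Riemann--Roch in the range $\deg>2g_L-2$, exactly as you do without choosing bases.
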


\begin{proof}
    We fix $\mathbb{F}_q$-bases for
    \[
        \begin{cases}
            \mathcal{L}(D_\mathfrak{B}),~\mathrm{denoted~by}~\{\alpha_1,\dots,\alpha_{d_{\mathfrak{B}}}\},~\mathrm{where}~d_\mathfrak{B}:=\dim_{\mathbb{F}_q}\mathcal{L}(D_\mathfrak{B}).\\
            \mathcal{L}(D),~\mathrm{denoted~by}~\{\beta_1,\dots,\beta_{d_\fs}\},~\mathrm{where}~d_\fs:=\dim_{\mathbb{F}_q}\mathcal{L}(D).\\
            \mathcal{L}(D_\mathfrak{B}+D),~\mathrm{denoted~by}~\{\gamma_1,\dots,\gamma_{d_\Sigma}\},~\mathrm{where}~d_\Sigma:=\dim_{\mathbb{F}_q}\mathcal{L}(D_\mathfrak{B}+D).
        \end{cases}
    \]
    Then, we express
    \[
        \mathfrak{B}_{ij}=\mathfrak{B}_{ij}^{[1]}\alpha_1+\cdots+\mathfrak{B}_{ij}^{[d_\mathfrak{B}]}\alpha_{d_\mathfrak{B}}, 
    \]
    where each $\mathfrak{B}_{ij}^{[\ell]} \in \FF_q$, 
    and for $\bx=(x_1,\dots,x_n)^\tr\in\mathcal{L}(D)^{n}$ we express
    \[
        x_k=x_k^{[1]}\beta_1+\cdots+x_k^{[d_\fs]}\beta_{d_\fs},
    \]
    where each $x_k^{[\ell]}\in\mathbb{F}_q$. Note that for each $\ell_1, \ell_2$, we can further express
    \[
        \alpha_{\ell_1}\beta_{\ell_2}=\sum_{\ell=1}^{d_\Sigma}h_{\ell_1,\ell_2}^{[\ell]}\gamma_\ell
    \]
    for some $h_{\ell_1,\ell_2}^{[\ell]}\in\mathbb{F}_q$. It follows that
    \[
        \mathfrak{B}\bx=\begin{pmatrix}
            c_1\\
            \vdots\\
            c_m
        \end{pmatrix},
    \]
    where
    \begin{align*}
        c_i=\sum_{j=1}^n\mathfrak{B}_{ij}x_j=\sum_{\ell=1}^{d_\Sigma}\bigg(\sum_{j=1}^n\sum_{\overset{1\leq\ell_1\leq d_\mathfrak{B}}{1\leq\ell_2\leq d_\fs}}\mathfrak{B}_{ij}^{[\ell_1]}x_j^{[\ell_2]}h_{\ell_1,\ell_2}^{[\ell]}\bigg )\gamma_\ell.
    \end{align*}
    If we 
    regard $x_j^{[\ell_2]}$ as indeterminate variables, then when we impose $c_i=0$ for each $1\leq i\leq m$, we obtain a linear system over $\mathbb{F}_q$ with $nd_\fs$ variables and $md_\Sigma$ equations.
    
    Since $D_{\mathfrak{B}}\geq 0$ is an effective divisor, it follows that
    \[
        \deg(D+D_\mathfrak{B})=\deg(D)+\deg(D_{\mathfrak{B}})\geq \deg(D)>2g_L-2.
    \]
    Therefore, by Riemann-Roch theorem, we have 
    \[
        nd_\fs=n(\deg(D)-g_L+1)>m(\deg(D)+\deg(D_{\mathfrak{B}})-g_L+1) =md_\Sigma.
    \]
    Hence, the induced linear system admits non-zero solutions. In particular, the following intersection contains non-zero elements
    \[
        \{0\}\subsetneq\Ker(\mathfrak{B})\cap\mathcal{L}(D)^n.
    \]
\end{proof}

    The proof of the next lemma is in a similar flavor of the proof of Lemma~\ref{Lem:v_adic_lower_bound}. It will lead to the desired $t$-degree bound given in Theorem~\ref{Thm:Triangularization}(4).

\begin{lemma}\label{L:t_degree_bound}
        Let $\mathcal{B}\in\Mat_{m\times n}(L[t])$ with $0<m<n$, $\deg_t(\mathcal{B})=u$.
        Then there exists a non-zero solution $\bx=(x_1,\dots,x_n)^\tr\in\Ker(\mathcal{B})$ with
        \[
            \deg_t(\bx)\leq w:=\left\lfloor\frac{mu}{n-m}\right\rfloor.
        \]
        Furthermore, if we express
        \[
            x_\ell=x_\ell^{[0]}+x_\ell^{[1]}t+\cdots+x_\ell^{[w]}t^{w}\in L[t]
        \]
        for each $1\leq\ell\leq n$, then there exists $\widetilde{\mathcal{B}}\in\Mat_{m(u+w+1)\times n(w+1)}(L)$ with $\ord_v(\widetilde{\mathcal{B}})=\ord_v(\mathcal{B})$ for each $v\in M_L$ such that
        \[
            (x_1^{[0]},\dots,x_1^{[w]},x_2^{[0]},\dots,x_2^{[w]},\dots,x_n^{[0]},\dots,x_n^{[w]})^\tr\in\Ker(\widetilde{\mathcal{B}}).
        \]
    \end{lemma}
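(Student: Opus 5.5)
Let $w:=\lfloor mu/(n-m)\rfloor$. The plan is to linearize the equation $\mathcal{B}\bx=0$ over $L$ by expanding both $\mathcal{B}$ and $\bx$ in powers of $t$, and then to count unknowns against equations, in the same spirit as Lemma~\ref{Lem:v_adic_lower_bound}.

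\textbf{Construction of $\widetilde{\mathcal{B}}$.} Write $\mathcal{B}=\sum_{k=0}^u\mathcal{B}^{[k]}t^k$ with $\mathcal{B}^{[k]}\in\Mat_{m\times n}(L)$. Look for $\bx$ of the form $x_\ell=\sum_{j=0}^{w}x_\ell^{[j]}t^j$ with unknown $x_\ell^{[j]}\in L$. The $i$-th entry of $\mathcal{B}\bx$ is a polynomial of degree at most $u+w$. Its coefficient of $t^s$, for $0\le s\le u+w$, is
\[
\sum_{\ell=1}^n\sum_{k+j=s}\mathcal{B}^{[k]}_{i\ell}x_\ell^{[j]}.
\]
Hence $\mathcal{B}\bx=0$ is equivalent to the vanishing of these $m(u+w+1)$ expressions. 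These expressions are $L$-linear forms in the $n(w+1)$ unknowns $x_\ell^{[j]}$.

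Define $\widetilde{\mathcal{B}}\in\Mat_{m(u+w+1)\times n(w+1)}(L)$ as the coefficient matrix of these forms, with rows indexed by $(i,s)$ and columns ordered as $(x_1^{[0]},\dots,x_1^{[w]},\dots,x_n^{[0]},\dots,x_n^{[w]})$. Concretely, the $(i,s),(\ell,j)$-entry is $\mathcal{B}^{[s-j]}_{i\ell}$ when $0\le s-j\le u$, and $0$ otherwise.

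\textbf{The valuation identity.} Every entry of $\mathcal{B}$'s coefficient matrices $\mathcal{B}^{[k]}_{i\ell}$ appears as an entry of $\widetilde{\mathcal{B}}$. For instance, take $j=0$ and $s=k$; this is possible since $k\le u\le u+w$. Every other entry of $\widetilde{\mathcal{B}}$ is $0$, which has valuation $+\infty$. Therefore
\[
\ord_v(\widetilde{\mathcal{B}})=\min_{i,\ell,k}\ord_v(\mathcal{B}^{[k]}_{i\ell})=\ord_v(\mathcal{B})
\]
for every $v\in M_L$, by the definition of $\ord_v$ on $L[t]$-matrices in \S\ref{SS:DivisorsBackground}.

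\textbf{Existence of a nonzero solution.} It remains to show that $\widetilde{\mathcal{B}}$ has a nontrivial kernel over $L$. Ordinary linear algebra over the field $L$ gives this as soon as there are more unknowns than equations, that is,
\[
n(w+1)>m(u+w+1),
\]
which is equivalent to $(n-m)(w+1)>mu$, i.e.\ $w+1>mu/(n-m)$. This holds because $w=\lfloor mu/(n-m)\rfloor$, and here the hypothesis $n>m$ is used. Any nonzero kernel vector of $\widetilde{\mathcal{B}}$ then assembles into a nonzero $\bx\in\Ker(\mathcal{B})$ with $\deg_t\bx\le w$, and its coefficient vector lies in $\Ker(\widetilde{\mathcal{B}})$ by construction.

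\textbf{Main obstacle.} There is no serious obstacle here; the only care needed is the bookkeeping of row and column indices in $\widetilde{\mathcal{B}}$. The point of the lemma is the statement $\ord_v(\widetilde{\mathcal{B}})=\ord_v(\mathcal{B})$. It allows Lemma~\ref{Lem:v_adic_lower_bound} to be applied to $\widetilde{\mathcal{B}}$ with $D_{\widetilde{\mathcal{B}}}=D_{\mathcal{B}}$, so that the solution can additionally be taken with coefficients in $\mathcal{L}(D)$.
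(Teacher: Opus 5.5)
Your proposal is correct and follows essentially the same route as the paper. You expand $\mathcal{B}$ and $\bx$ in powers of $t$, compare the $n(w+1)$ unknowns against at most $m(u+w+1)$ linear equations over $L$ using $w+1>mu/(n-m)$, and take $\widetilde{\mathcal{B}}$ to be the coefficient matrix of the resulting system. Your explicit check that every $t$-coefficient of every entry of $\mathcal{B}$ occurs as an entry of $\widetilde{\mathcal{B}}$, with all other entries zero, justifies the identity $\ord_v(\widetilde{\mathcal{B}})=\ord_v(\mathcal{B})$, which the paper only asserts.
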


    \begin{proof}
        We express $\mathcal{B}=(\mathcal{B}_{ij})$ with
        \[
            \mathcal{B}_{ij}=\mathcal{B}_{ij}^{[0]}+\mathcal{B}_{ij}^{[1]}t+\cdots+\mathcal{B}_{ij}^{[u]}t^{u}.
        \]
       Let $\bx=(x_1,\dots,x_n)^\tr\in\Mat_{n\times 1}(L[t])$ with $\deg_t(\bx)\leq w$. We can write
        \[
            x_\ell=x_\ell^{[0]}+x_\ell^{[1]}t+\cdots+x_\ell^{[w]}t^{w}\in L[t]
        \]
        for each $1\leq\ell\leq n$. If we set $\mathcal{B}\bx=(c_1,\dots,c_m)^\tr\in\Mat_{m\times 1}(L[t])$, then for each $1\leq i\leq m$ we have
        \[
            c_i=\sum_{j=1}^n\mathcal{B}_{ij}x_j=\sum_{j=1}^n\big(\mathcal{B}_{ij}^{[0]}+\cdots+\mathcal{B}_{ij}^{[u]}t^{u}\big)\big(x_j^{[0]}+\cdots+x_j^{[w]}t^{w}\big).
        \]
        It follows that if we express $c_i=c_i^{[0]}+c_i^{[1]}t+\cdots+c_i^{[u+w]}t^{u+w}$, then for $0\leq N\leq u+w$ the coefficient of $t^N$ for $c_i$ is given by 
        \[
            c_i^{[N]}=\sum_{j=1}^n\sum_{\underset{\ell_1+\ell_2=N}{\ell_1,\ell_2\in\mathbb{Z}_{\geq 0}}}\mathcal{B}_{ij}^{[\ell_1]}x_j^{[\ell_2]}.
        \]
        Now we treat $x_j^{[\ell_2]}$ as indeterminate. Then we have $n(w+1)$ variables. If we require $\bx\in\Ker(\mathcal{B})$, then we must have $c_i^{[N]}=0$,  which induces at most $m(u+w+1)$ linear equations over $L$. Using our assumption that $w=\left\lfloor\frac{mu}{n-m}\right\rfloor>\frac{mu}{n-m}-1$, we must have
        \[
            n(w+1)>m(u+w+1).
        \]
        The existence of $0\neq\bx\in\Ker(\mathcal{B})$ with $\deg_t(\bx)\leq w$ now follows. Note that the induced linear system over $L$ from
        \[
            \sum_{j=1}^n\sum_{\underset{\ell_1+\ell_2=N}{\ell_1,\ell_2\in\mathbb{Z}_{\geq 0}}}\mathcal{B}_{ij}^{[\ell_1]}x_j^{[\ell_2]}=0
        \]
        can be described by a matrix $\widetilde{\mathcal{B}}\in\Mat_{m(u+w+1)\times n(w+1)}(L)$ with $\ord_v(\widetilde{\mathcal{B}})=\ord_v(\mathcal{B})$ for each $v\in M_L$ such that
        \[
            (x_1^{[0]},\dots,x_1^{[w]},x_2^{[0]},\dots,x_2^{[w]},\dots,x_n^{[0]},\dots,x_n^{[w]})^\tr\in\Ker(\widetilde{\mathcal{B}}).
        \]
        This completes the proof. 
    \end{proof}

    \begin{proof}[Proof of Theorem~\ref{Thm:Triangularization}]
        If we construct the matrix $A_D$ whose $k$-th row comes from the solution of $(\mathbb{E}_k)$ given in $\eqref{Eq:Key_system}$, then $A_DB$ must be an upper triangular matrix. Moreover, if we have a non-trivial solution of $(\mathbb{E}_k)$, then Lemma~\ref{Lem:Non_Zero} implies that $a_{kk}\neq 0$. Also, it follows from Lemma~\ref{Lem:Non_Zero_Diagonal} and Lemma~\ref{Lem:Jordan_Form} that (2) holds. Thus, our task reduces to constructing non-zero solutions for the system of equations $(\mathbb{E}_k)$ such that they satisfy the desired properties (3), (4), and (5).

        Now we fix $k$ for some $2\leq k\leq d$, and focus on the system of equations $(\mathbb{E}_k)$.
        We express
        \[
            B_{ij}=B_{ij}^{[0]}+\cdots+B_{ij}^{[r]}\tau^r\in L[t][\tau],
        \]
        and
        \[
            a_{k\ell}=a_{k\ell}^{[0]}+\cdots+a_{k\ell}^{[s_k]}\tau^{s_k}\in L[t][\tau],
        \]
        where $s_k$ is a fixed integer with $s_k\geq r(k-1)$. 
        It follows that
        \[
            \sum_{i=1}^ka_{ki}B_{ij}=\sum_{m=0}^{r+s_k}c_{kj}^{[m]}\tau^m,
        \]
        where
        \begin{equation}\label{E:linsystaudeg}
            c_{kj}^{[m]}=\sum_{i=1}^k\sum_{\overset{n_1,n_2\geq 0}{n_1+n_2=m}}a_{ki}^{[n_1]}(B_{ij}^{[n_2]})^{(n_1)}\in L[t].
        \end{equation}
        Here we adopt the convention that $a_{ki}^{[n_1]}=0$ if $n_1>s_k$ and $B_{ij}^{[n_2]}=0$ if $n_2>r$.

        By imposing the coefficient of $\tau^m$, $0 \leq m \leq r+s_k$, equals zero in each of the $k-1$ equations in $(\mathbb{E}_k)$ in \eqref{Eq:Key_system}, the number of the induced equations over $L[t]$ is at most $(r+s_k+1)(k-1)$. Now we treat $a_{ki}^{[m]}$ as indeterminates. Then, the number of indeterminates $a_{ki}^{[m]}$ is given by $(s_k+1)k$. On the other hand, since $s_k\geq r(k-1)$, we must have
        \[
            (s_k+1)k>(r+s_k+1)(k-1).
        \]
        It follows that the system of equations $(\mathbb{E}_k)$ given in \eqref{Eq:Key_system} admits non-zero solutions. Note that the induced system of equations over $L[t]$ can be described explicitly by a matrix 
        \[
            \widetilde{B}_k\in\Mat_{(r+s_k+1)(k-1)\times(s_k+1)k}(L[t]).
        \]
        Here we allow some of the rows of $\widetilde{B}_k$ to be zero if necessary. By our construction, we must have $\deg_t(\widetilde{B}_k)\leq\deg_t(B)$ and  $\ord_v(\widetilde{B}_k)\geq\min\{0,q^{s_k}\ord_v(B)\}$ for each $v\in M_L$ such that
        \[
            (a_{k1}^{[0]},\dots,a_{k1}^{[s_k]},a_{k2}^{[0]},\dots,a_{k2}^{[s_k]},\dots,a_{kk}^{[0]},\dots,a_{kk}^{[s_k]})^\tr\in\Ker(\widetilde{B}_k).
        \]

        Now we fix $s_k=r(k-1)$ and specialize $\mathcal{B}=\widetilde{B}_k$ in Lemma~\ref{L:t_degree_bound}. In this case,
        \begin{align*}
            (s_k+1)k-(r+s_k+1)(k-1)&=s_kk+k-rk+r-s_kk+s_k-k+1\\
            &=-rk+r+s_k+1\\
            &=-rk+r+r(k-1)+1\\
            &=1,
        \end{align*}
        and thus there is a non-zero element in $\Ker(\widetilde{B}_k)$ whose $t$-degree is bounded above by
        \[
            \left\lfloor\frac{(r+s_k+1)(k-1)u}{(s_k+1)k-(r+s_k+1)(k-1)}\right\rfloor=(rk+1)(k-1)u.
        \]
        Furthermore, we can specialize the matrix $\mathfrak{B}$ in Lemma~\ref{Lem:v_adic_lower_bound} to the matrix $\widetilde{\mathcal{B}}$ constructed in Lemma~\ref{L:t_degree_bound}. In this case, the size of $\mathfrak{B}=\Tilde{\mathcal{B}}$ is $(rk+1)(k-1)\big(u+(rk+1)(k-1)u+1\big)$ by $(rk-r+1)k\big((rk+1)(k-1)u+1\big)$ and $\mathrm{ht}(\mathfrak{B})_\infty\leq q^{s_k}\mathrm{ht}(B)_\infty=q^{r(k-1)}\mathrm{ht}(B)_\infty$. It follows from Lemma~\ref{Lem:v_adic_lower_bound} that for each $2\leq k\leq d$ we can have non-trivial solutions with
        \[
            a_{ki}\in\big(\mathcal{L}(D_k)\otimes_{\mathbb{F}_q}\mathbb{F}_q[t]\big)[\tau]
        \]
        for every $1\leq i\leq d$. The desired result now follows.
    \end{proof}

    \begin{example}\label{Ex:3rdCTs}
        Let $\mathbf{C}^{\otimes 3}=(\mathbb{G}_a^3,[\cdot]_3)$ be the $3$-rd tensor powers of the Carlitz module defined by
        \begin{align*}
            [\cdot]_3:\mathbb{F}_q[t]&\to\Mat_{3}(K[\tau])\\
            a&\mapsto[a]_3
        \end{align*}
        which is uniquely determined by $[t]_3=\begin{pmatrix}
            \theta & 1 & \\
             & \theta & 1\\
             \tau & & \theta
        \end{pmatrix}$. 
    Let \[B=[t]_3-t\Id_3 =  \begin{pmatrix}
                \theta-t & 1 & \\
                 & \theta-t & 1\\
                 \tau & & \theta-t
            \end{pmatrix}.\]
            Using the method presented in this subsection, we observe that for $k=2$, we want to solve for $a_{21},a_{22}\in K[t,\tau]$ such that
        \[
            a_{21}(\theta-t)=0.
        \]
        It follows that we can choose $a_{21}=0$ and $a_{22}=1$. 
        
        For $k=3$, we are seeking for $a_{31},a_{32},a_{33}\in K[t,\tau]$ such that
        \begin{equation}\label{Eq:3TC_1}
            \begin{cases}
                a_{31}(\theta-t)+a_{33}\tau=0\\
                a_{31}+a_{32}(\theta-t)=0
            \end{cases}.
        \end{equation}
        One may use Theorem~\ref{Thm:Triangularization}(3) to see that there is a non-trivial choice with
        \[
            \max\{\deg_\tau(a_{31}),\deg_\tau(a_{32}),\deg_\tau(a_{33})\}\leq r(k-1)=1\cdot(3-1)=2,
        \]
        and then set up a system of linear equations over $K[t]$ with $4$ linear relations and $6$ variables. The complete set of solutions will produce all the possible $a_{31},a_{32},a_{33}$. Or one can perform a direct substitution: the second line of \eqref{Eq:3TC_1} is equivalent to $a_{31}=-a_{32}(\theta-t)$. Then the first equation can be rewritten as $a_{33}\tau=a_{32}(\theta-t)^2$. It follows that $a_{32}=\tau$, $a_{33}=(t-\theta^q)^2$, and $a_{31}=-a_{32}(\theta-t)=(t-\theta^q)\tau$ are valid solution.

        Finally, one can check
        \[
            \begin{pmatrix}
                1 & & \\
                 & 1 & \\
                (t-\theta^q)\tau & \tau & (t-\theta^{q})^2
            \end{pmatrix}\begin{pmatrix}
            \theta-t & 1 & \\
             & \theta-t & 1\\
             \tau & & \theta-t
        \end{pmatrix}=\begin{pmatrix}
            \theta-t & 1 & \\
             & \theta-t & 1\\
             & & \tau-(t-\theta)(t-\theta^q)^2 
        \end{pmatrix}
        \]
        is an upper triangular matrix.
    \end{example}

\subsection{Solutions of inhomogeneous Frobenius difference equations}
    In what follows, we aim to investigate the solution space of some specific types of Frobenius difference equations. Let $d>0$ be a fixed positive integer, and $Q_1,\dots,Q_\ell\in\Mat_{d\times 1}(L)$ be some fixed column vectors with entries in $L$. Given $\mathcal{A}\in\Mat_{d\times d}(L[t,\tau])$, we set
    \[
        \mathrm{Sol}(\mathcal{A};Q_1,\dots,Q_\ell):=\{(\bm{x},b_1,\dots,b_\ell)\in\Mat_{d\times 1}(L[t])\times\mathbb{F}_q[t]^{\oplus\ell}\mid \mathcal{A}\bm{x}=b_1Q_1+\cdots+b_\ell Q_\ell\}.
    \]
    Furthermore, we define
    \begin{align*}
        \mathrm{pr}_1:\Mat_{d\times 1}(L[t])\times\mathbb{F}_q[t]^{\oplus\ell}&\to\Mat_{d\times 1}(L[t])\\
        (\bm{x},b_1,\dots,b_\ell)&\mapsto\bm{x}
    \end{align*}

    The main result to obtain in this section is the following statement. Since the $\mathbb{F}_q$-basis of the Riemann-Roch space associated to a divisor can be computed effectively, we then obtain an explicit way to regard $\mathrm{Sol}_\phi(P_1,\dots,P_\ell)$ as an $\mathbb{F}_q[t]$-submodule of a free $\mathbb{F}_q[t]$-module of finite rank. This will allow us to prove in the next subsection that $\mathrm{Sol}_\phi(P_1,\dots,P_\ell)$ can be computed by a system of linear equations over $\mathbb{F}_q[t]$.

    \begin{theorem}\label{Thm:Sol_Divisor}
        Let $E=(\mathbb{G}_a^d,\phi)$ be a $d$-dimensional $t$-module defined over $L$ and $P_1,\dots,P_\ell\in E(L)=\Mat_{d\times 1}(L)$. 
        Then there is an explicitly constructed effective divisor $D_{\mathcal{P}}\in\mathrm{Div}(L)$ such that
        \[
            \mathrm{pr}_1\big(\mathrm{Sol}_\phi(P_1,\dots,P_\ell)\big)\subset\Mat_{d\times 1}(\mathcal{L}(D_{\mathcal{P}})\otimes_{\mathbb{F}_q}\mathbb{F}_q[t]).
        \]
    \end{theorem}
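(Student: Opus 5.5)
We first put $\partial(\phi_t - t) = \partial\phi_t - t\,\Id_d$ into Jordan form. Choose $U \in \GL_d(L')$ over a finite extension $L'$ of $L$ such that $U(\partial\phi_t)U^{-1}$ is in Jordan canonical form. Since $\partial\phi_t - \theta\Id_d$ is nilpotent, its only eigenvalue is $0$, so $U$ can be taken in $\GL_d(L)$ (the Jordan form of a nilpotent matrix is rational). Conjugating by $U$ replaces $\phi$ by an isomorphic $t$-module. The solution space then changes by $\bm{x}\mapsto U\bm{x}$ and $P_i\mapsto UP_i$, and pole divisors change by at most $\divisor(U)_\infty+\divisor(U^{-1})_\infty$. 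So we may assume that $B := \phi_t - t$ has $\partial B = \partial\phi_t - t$ in Jordan form. Its determinant is $(\theta - t)^d \neq 0$, its $\tau$-degree is $r$, and $\deg_t(B) = 1$.

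Next apply Theorem~\ref{Thm:Triangularization} to get $A_D$ with $T := A_D B$ upper triangular and with nonzero diagonal entries $T_{kk}$. The divisors $D_k$ are chosen explicitly as in \eqref{Eq:Divisor_Bound}. If $(\bm{x}, a_1, \dots, a_\ell)\in\mathrm{Sol}_\phi(P_1,\dots,P_\ell)$, then $T\bm{x} = \sum_i a_i A_D P_i$, where the right side is $\sum_i a_i Q_i$ and $A_D P_i$ has entries in $\mathcal{L}(D')\otimes\mathbb{F}_q[t]$ for the explicit divisor
\[
D' := \max_k D_k + \divisor(P_1,\dots,P_\ell)_\infty\cdot q^{r(d-1)}.
\]
We then solve by back-substitution from the last row upward. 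Row $k$ reads
\[
T_{kk} x_k = (\text{known } L[t]\text{-combination}) - \sum_{j>k} T_{kj} x_j .
\]
This reduces the theorem to a one-dimensional statement.

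\emph{One-dimensional lemma.} Let $f = f_0 + f_1\tau + \cdots + f_s\tau^s \in L[t,\tau]$ be nonzero, with coefficients in $\mathcal{L}(D_f)\otimes\mathbb{F}_q[t]$, and let $y \in \mathcal{L}(D_y)\otimes\mathbb{F}_q[t]$. If $x \in L[t]$ satisfies $fx = y$, then $x \in \mathcal{L}(D_x)\otimes\mathbb{F}_q[t]$ for an explicit $D_x$ depending only on $D_f$, $D_y$ and $f$.

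Since $L$ and $\mathbb{F}_q[t]$ are linearly disjoint, we may work with $v$-adic valuations coefficientwise in $t$; then $\ord_v$ of an element of $L[t]$ is the minimum over its $t$-coefficients. Fix a place $v$ and suppose $\ord_v(x) = -N < 0$ with $N$ large. Let $j$ be the largest index with $f_j \neq 0$. We compare $\ord_v(f_j x^{(j)}) = \ord_v(f_j) - q^j N$ with the other terms $f_i x^{(i)}$, whose valuations are at least $\ord_v(f_i) - q^i N$. Once
\[
q^jN - q^{j-1}N > \max_i\bigl(\ord_v(f_i)\bigr) - \ord_v(f_j) + (\text{pole order of } y \text{ at } v),
\]
the top term cannot be cancelled. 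Two points need care in making this work. First, valuations of elements of $L[t]$ are minima over $t$-coefficients, so one should work in the Gauss-norm sense, which is multiplicative because $\mathbb{F}_q[t]$ is exact: the Gauss valuation is a valuation on $L[t]$ and the twist multiplies it by $q$. Second, the cancellation argument needs an ultrametric comparison. The resulting bound is
\[
N \le \frac{\text{(pole orders)}}{q^{j-1}(q-1)} ,
\]
and it is only needed at the finitely many places in the support of $\divisor(f)$, $\divisor(y)$, and the zeros of $f_j$. At all other places $x$ has no pole, since there $\ord_v(x) < 0$ forces $\ord_v(fx) < 0$. This defines $D_x$.

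Applying the lemma row by row, from $k = d$ down to $k = 1$, and feeding the divisor obtained for $x_{k+1},\dots,x_d$ (scaled by the $q$-powers coming from $T_{kj}$) into row $k$, produces an effective divisor. Adding $\divisor(U^{-1})_\infty$ gives $D_{\mathcal{P}}$. The degree $\deg_t x$ does not need to be bounded here, because $\mathcal{L}(D)\otimes\mathbb{F}_q[t]$ allows arbitrary $t$-degree.

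The main obstacle is the one-dimensional lemma, specifically getting the valuation estimate to work for polynomials in $t$ via the Gauss valuation. There is an additional wrinkle: the term $x^{(j)}$ with the largest twist dominates negative valuations, so one must use the top index $j$, not the constant term. I would first try the top-term domination argument above, and treat $\tau$-divisibility separately (it is harmless, since $f_0 \neq 0$ is not needed).
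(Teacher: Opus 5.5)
Your proposal is correct and follows essentially the same route as the paper: conjugate $\partial\phi_t$ into Jordan form by some $\mathrm{J}\in\GL_d(L)$, triangularize $\phi_t-t$ via Theorem~\ref{Thm:Triangularization}, and back-substitute row by row using the one-dimensional top-term-domination estimate (Lemmas~\ref{Lem:1D_Case_New}, \ref{Lem:1D_Case_New_2}, \ref{Lem:Inductive_Bounds_New}); your Gauss-valuation remark makes explicit the multiplicativity of $\ord_v$ on $L[t]$ that the paper uses without comment. The only slip is that your domination threshold has the sign reversed: it should read $(q^j-q^{j-1})N>\ord_v(f_j)-\min_{i<j}\ord_v(f_i)$, imposed separately from the condition $q^jN>\ord_v(f_j)+(\text{pole order of }y\text{ at }v)$, and together these give a slightly weaker form of the bound in \eqref{E:divA}.
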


    We postpone the proof until we have established essential preliminary results.
    The following observation is the starting point of the reduction step.
    \begin{lemma}\label{Lem:Emb}
        Let $E=(\mathbb{G}_a^d,\phi)$ be a $d$-dimensional $t$-module defined over $L$, $P_1,\dots,P_\ell\in E(L)=\Mat_{d\times 1}(L)$. 
        Let $A\in\Mat_{d\times d}(L[t,\tau])$, 
        and set $\mathcal{A}:=A(\phi_t-t)$ and for each $1\leq i \leq \ell$, $Q_i:=AP_i\in\Mat_{d\times 1}(L[t])$. Then the natural inclusion
        \[
            \mathrm{Sol}_\phi(P_1,\dots,P_\ell)\hookrightarrow\mathrm{Sol}(\mathcal{A};Q_1,\dots,Q_\ell)
        \]
        is well-defined. 
    \end{lemma}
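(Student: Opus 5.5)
The inclusion is the identity on tuples $(\bm{x},a_1,\dots,a_\ell)$, so the only point to check is that a tuple in $\mathrm{Sol}_\phi(P_1,\dots,P_\ell)$ also satisfies the defining equation of $\mathrm{Sol}(\mathcal{A};Q_1,\dots,Q_\ell)$. Fix $(\bm{x},a_1,\dots,a_\ell)\in\mathrm{Sol}_\phi(P_1,\dots,P_\ell)$. Here $\bm{x}\in E[t]$, which we identify with $\Mat_{d\times 1}(L[t])$, and the tuple satisfies $(\phi_t-t)\bm{x}=\sum_i a_iP_i$. We apply the operator $A$ to both sides via the action \eqref{Eq:tau_action}.

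The first step is to check that this action is compatible with multiplication in $\Mat_{d\times d}(L[t,\tau])$. That is, for $A,C\in\Mat_{d\times d}(L[t,\tau])$ and $\bv\in\Mat_{d\times 1}(L[t])$ we need $(AC)\cdot\bv=A\cdot(C\cdot\bv)$. It suffices to verify this on monomials $\alpha t^a\tau^i$ and $\beta t^b\tau^j$. The relation $\tau\alpha=\alpha^q\tau$ matches the rule $(\beta f)^{(i)}=\beta^{q^i}f^{(i)}$. Also $t$ commutes with $\tau$, and $t^{(i)}=t$ because the twisting fixes $t$. So the twisted multiplication rule agrees with composition of the operators. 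In particular, $\big(A(\phi_t-t)\big)\bm{x}=A\big((\phi_t-t)\bm{x}\big)$.

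The second step uses $\mathbb{F}_q[t]$-linearity of the action of $A$. The twisting fixes $\mathbb{F}_q[t]$ coefficientwise, since $c^{q^i}=c$ for $c\in\mathbb{F}_q$, and $t$ is central. Hence $A(a_iP_i)=a_i\,(AP_i)=a_iQ_i$ for $a_i\in\mathbb{F}_q[t]$. Since the action is also additive,
\[
    \mathcal{A}\bm{x}=A\Big(\sum_{i=1}^\ell a_iP_i\Big)=\sum_{i=1}^\ell a_iQ_i .
\]
This is exactly the condition for $(\bm{x},a_1,\dots,a_\ell)\in\mathrm{Sol}(\mathcal{A};Q_1,\dots,Q_\ell)$. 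Note that $Q_i\in\Mat_{d\times 1}(L[t])$ rather than $L$, but the definition of $\mathrm{Sol}$ makes sense verbatim for such $Q_i$.

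Injectivity is automatic because the map is the identity on tuples. The map is also $\mathbb{F}_q[t]$-linear, since both solution sets are $\mathbb{F}_q[t]$-submodules of the same ambient module. The only mildly delicate point is the compatibility check in the first step; once the twisting is seen to respect products and to fix $t$, everything else is formal.
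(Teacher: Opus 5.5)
Your proposal is correct and follows the paper's proof: apply $A$ to both sides of $(\phi_t-t)\bm{x}=\sum_i a_iP_i$ and use additivity and $\mathbb{F}_q[t]$-linearity of the action to get $\mathcal{A}\bm{x}=\sum_i a_iQ_i$. Your explicit check that the action \eqref{Eq:tau_action} respects multiplication in $\Mat_{d\times d}(L[t,\tau])$, and your remark that $Q_i$ has entries in $L[t]$ rather than $L$, fill in details the paper leaves implicit.
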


    \begin{proof}
        Given $(\bm{x},a_1,\dots,a_\ell)\in\mathrm{Sol}_\phi(P_1,\dots,P_\ell)$, we have
        \[
            (\phi_t-t)\bm{x}=a_1P_1+\cdots+a_\ell P_\ell.
        \]
        It follows that
        \[
            \mathcal{A}\bm{x}=A(\phi_t-t)\bm{x}=A(a_1P_1+\cdots+a_\ell P_\ell)=a_1Q_1+\cdots+a_\ell Q_\ell
        \]
        which verifies that the inclusion is indeed well-defined.
    \end{proof}

    In order to prove Theorem~\ref{Thm:Sol_Divisor}, we need some estimations on the $v$-adic valuation.

    \begin{lemma}\label{Lem:1D_Case_New}
        Let $D=\sum_{v\in M_L}a_v\cdot(v)\in\mathrm{Div}(L)$ be an effective divisor. Consider $\mathcal{A}=\kappa_0+\kappa_1\tau+\cdots+\kappa_r\tau^r\in L[t,\tau]$ with $\kappa_0,\dots,\kappa_r\in L[t]$, and $\bm{y}\in\mathcal{L}(D)\otimes_{\mathbb{F}_q}\mathbb{F}_q[t]\subset L[t]$. If $\bm{x}\in L[t]$ satisfies $\mathcal{A}\bm{x}=\bm{y}$, then $\bm{x}\in\mathcal{L}(D_\mathcal{A})\otimes_{\mathbb{F}_q}\mathbb{F}_q[t]$ where $D_{\mathcal{A}}$ is the effective divisor defined by
        \begin{equation}\label{E:divA}
            D_{\mathcal{A}}:=\sum_{v\in M_L}\max_{0\leq j\leq r-1}\left\{-\left\lceil\frac{\ord_v(\kappa_j)-\ord_v(\kappa_r)}{q^r-q^j}\right\rceil,-\left\lceil\frac{-a_v-\ord_v(\kappa_r)}{q^r}\right\rceil,0\right\}\cdot(v)\in\mathrm{Div}(L).
        \end{equation}
    \end{lemma}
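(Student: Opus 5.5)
The plan is a place-by-place valuation argument: the top-degree term $\kappa_r\bm{x}^{(r)}$ dominates $v$-adically whenever $\bm{x}$ has too negative a valuation at $v$.

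\textbf{Setup.} If $\bm{x}=0$ there is nothing to prove, so assume $\bm{x}\neq0$. We may also assume $\kappa_r\neq 0$, so that $r=\deg_\tau\mathcal{A}$ and $\ord_v(\kappa_r)$ is finite. Membership in $\mathcal{L}(D')\otimes_{\mathbb{F}_q}\mathbb{F}_q[t]$ means that every $t$-coefficient of the polynomial lies in $\mathcal{L}(D')$. Writing $D'=\sum_v b_v\cdot(v)$, this is equivalent to $\ord_v(\bm{x})\geq -b_v$ for every $v\in M_L$, with $\ord_v$ on $L[t]$ as defined in \S\ref{SS:DivisorsBackground}.

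\textbf{Properties of $\ord_v$ on $L[t]$.} The function $\ord_v(F)=\min_i\ord_v(f_i)$ is the Gauss valuation. By Gauss's lemma it satisfies
\[
\ord_v(FG)=\ord_v(F)+\ord_v(G)\quad\text{and}\quad \ord_v(F+G)\geq\min\{\ord_v F,\ord_v G\},
\]
with equality in the second relation when the two valuations differ. Moreover, twisting raises every coefficient to the $q^j$-th power, so $\ord_v(\bm{x}^{(j)})=q^j\ord_v(\bm{x})$.

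\textbf{Main argument.} Fix $v$, let $b_v$ be the coefficient of $(v)$ in \eqref{E:divA}, and put $n:=\ord_v(\bm{x})$. Suppose for contradiction that $n<-b_v$. Unwinding the maximum in \eqref{E:divA} gives the following.

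For each $0\leq j\leq r-1$, $n<\lceil c_j\rceil$ where $c_j=\frac{\ord_v(\kappa_j)-\ord_v(\kappa_r)}{q^r-q^j}$. Since $n$ is an integer, $n\leq\lceil c_j\rceil-1<c_j$. Hence $(q^r-q^j)n<\ord_v(\kappa_j)-\ord_v(\kappa_r)$, which is equivalent to
\[
\ord_v(\kappa_r\bm{x}^{(r)})=\ord_v(\kappa_r)+q^rn<\ord_v(\kappa_j)+q^jn=\ord_v(\kappa_j\bm{x}^{(j)}).
\]
This inequality is strict. (If some $\kappa_j=0$, that term is absent and nothing needs checking.) By the strict ultrametric inequality,
\[
\ord_v(\bm{y})=\ord_v(\mathcal{A}\bm{x})=\ord_v(\kappa_r)+q^rn.
\]

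Similarly, $n<\lceil(-a_v-\ord_v(\kappa_r))/q^r\rceil$ gives $n<(-a_v-\ord_v(\kappa_r))/q^r$. Therefore $\ord_v(\bm{y})=\ord_v(\kappa_r)+q^rn<-a_v$. This contradicts $\bm{y}\in\mathcal{L}(D)\otimes_{\mathbb{F}_q}\mathbb{F}_q[t]$, which says $\ord_v(\bm{y})\geq-a_v$.

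Hence $\ord_v(\bm{x})\geq -b_v$ for every $v$, and so $\bm{x}\in\mathcal{L}(D_{\mathcal{A}})\otimes_{\mathbb{F}_q}\mathbb{F}_q[t]$.

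\textbf{$D_{\mathcal{A}}$ is a divisor.} It remains to note that $D_{\mathcal{A}}$ is an effective divisor, i.e. $b_v=0$ for almost all $v$. Effectivity holds because the maximum includes $0$. For all but finitely many $v$, the nonzero coefficients of the $\kappa_j$ are $v$-adic units and $a_v=0$. Then every $c_j$ is $0$ and $\lceil(-a_v-\ord_v\kappa_r)/q^r\rceil=0$, so $b_v=0$.

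\textbf{Main obstacle.} The only delicate points are the multiplicativity of the Gauss valuation on $L[t]$, needed for $\ord_v(\kappa_j\bm{x}^{(j)})$, and the integer rounding that upgrades $n<\lceil c\rceil$ to the strict inequality $n<c$. Both are routine once stated.
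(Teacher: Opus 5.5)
Your proposal is correct and follows essentially the same route as the paper: assume $\ord_v(\bm{x})$ is too small, observe that $\kappa_r\bm{x}^{(r)}$ then strictly dominates every other term $v$-adically, and contradict $\ord_v(\bm{y})\geq -a_v$. The paper leaves several details implicit that you make explicit: the integer rounding via integrality of $\ord_v(\bm{x})$, the multiplicativity of the Gauss valuation on $L[t]$, and the finiteness of the support of $D_{\mathcal{A}}$.
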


    \begin{proof}
        It suffices to show that if $\bm{x}\in L[t]$ with
        \begin{equation}\label{Eq:DifferenceEq}
            \kappa_r\bm{x}^{(r)}+\cdots+\kappa_1\bm{x}^{(1)}+\kappa_0\bm{x}=\bm{y},
        \end{equation}
        then for $v\in M_L$ we have
        \[
            \mathrm{ord}_v(\bm{x})\geq\min_{0\leq j\leq r-1}\left\{\frac{\mathrm{ord}_v(\kappa_j)-\mathrm{ord}_v(\kappa_r)}{q^r-q^j},\frac{-a_v-\mathrm{ord}_v(\kappa_r)}{q^r}\right\}.
        \]
        Suppose on the contrary that
        \[
            \mathrm{ord}_v(\bm{x})<\min_{0\leq j\leq r-1}\left\{\frac{\mathrm{ord}_v(\kappa_j)-\mathrm{ord}_v(\kappa_r)}{q^r-q^j},\frac{-a_v-\mathrm{ord}_v(\kappa_r)}{q^r}\right\}.
        \]
        It follows that for each $0\leq j\leq r-1$, we have
        \[
            \mathrm{ord}_v(\bm{x})<\frac{\mathrm{ord}_v(\kappa_j)-\mathrm{ord}_v(\kappa_r)}{q^r-q^j},
        \]
        or equivalently
        \[
            \mathrm{ord}_v(\kappa_r\bm{x}^{(r)})<\mathrm{ord}_v(\kappa_j\bm{x}^{(j)}).
        \]
        This implies that
        \[
            \mathrm{ord}_v\big(\kappa_r\bm{x}^{(r)}+\cdots+\kappa_1\bm{x}^{(1)}+\kappa_0\bm{x}\big)=\mathrm{ord}_v(\kappa_r\bm{x}^{(r)})=\mathrm{ord}_v(\kappa_r)+q^r\mathrm{ord}_v(\bm{x}).
        \]
        By using \eqref{Eq:DifferenceEq}, we must have
        \[
            \mathrm{ord}_v(\kappa_r)+q^r\mathrm{ord}_v(\bm{x})=\mathrm{ord}_v(\bm{y})\geq -a_v
        \]
        which contradicts to the assumption that 
        \[
            \mathrm{ord}_v(\bm{x})<\frac{-a_v-\mathrm{ord}_v(\kappa_r)}{q^r}.
        \]
        The desired inequality now follows.
    \end{proof}
    
    \begin{lemma}\label{Lem:1D_Case_New_2}
        Let $D=\sum_{v\in M_L}a_v\cdot(v)\in\mathrm{Div}(L)$ be an effective divisor. Consider $\mathcal{A}=\kappa_0+\kappa_1\tau+\cdots+\kappa_r\tau^r\in L[t,\tau]$ with $\kappa_0,\dots,\kappa_r\in L[t]$, and $\bm{x}\in\mathcal{L}(D)\otimes_{\mathbb{F}_q}\mathbb{F}_q[t]\subset L[t]$. Then $\bm{y}:=\mathcal{A}\bm{x}\in\mathcal{L}\big(\mathcal{A}(D)\big)\otimes_{\mathbb{F}_q}\mathbb{F}_q[t]$ where $\mathcal{A}(D)$ is the effective divisor defined by
        \begin{equation}\label{Eq:divAD}
            \mathcal{A}(D):=\sum_{v\in M_L}\max_{0\leq j\leq r}\{q^ja_v-\ord_v(\kappa_j),0\}\cdot(v)\in\mathrm{Div}(L).
        \end{equation}
    \end{lemma}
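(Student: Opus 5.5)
The plan is to argue place by place, using the ultrametric inequality together with $\mathrm{ord}_v(f^{(j)})=q^j\mathrm{ord}_v(f)$. Following the conventions of \S\ref{SS:DivisorsBackground}, for $F=\sum_i f_it^i\in L[t]$ we have $\mathrm{ord}_v(F)=\min_i\mathrm{ord}_v(f_i)$. Moreover, an element $F\in L[t]$ lies in $\mathcal{L}(D')\otimes_{\mathbb{F}_q}\mathbb{F}_q[t]$ exactly when each coefficient $f_i$ lies in $\mathcal{L}(D')$, that is, exactly when $\mathrm{ord}_v(F)\geq -a'_v$ for every $v$, where $D'=\sum a'_v\cdot(v)$. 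So it suffices to show that for every $v\in M_L$,
\[
\mathrm{ord}_v(\bm{y})\geq -\max_{0\leq j\leq r}\{q^ja_v-\mathrm{ord}_v(\kappa_j),0\}.
\]

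First I would record two facts for $F,G\in L[t]$. The first is $\mathrm{ord}_v(F^{(j)})=q^j\,\mathrm{ord}_v(F)$, which holds because twisting raises every coefficient to the $q^j$-th power. The second is $\mathrm{ord}_v(FG)\geq \mathrm{ord}_v(F)+\mathrm{ord}_v(G)$. The second follows because each coefficient of $FG$ is a finite sum of products $f_ig_k$, and the non-Archimedean inequality bounds each such sum below by the minimum of $\mathrm{ord}_v(f_i)+\mathrm{ord}_v(g_k)$. (In fact Gauss's lemma gives equality, but the inequality is all we need.)

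Next, take $\bm{x}\in\mathcal{L}(D)\otimes_{\mathbb{F}_q}\mathbb{F}_q[t]$, so that $\mathrm{ord}_v(\bm{x})\geq -a_v$. Applying the two facts to each term of $\bm{y}=\sum_{j=0}^r\kappa_j\bm{x}^{(j)}$ gives
\[
\mathrm{ord}_v(\kappa_j\bm{x}^{(j)})\geq \mathrm{ord}_v(\kappa_j)-q^ja_v.
\]
The ultrametric inequality then yields
\[
\mathrm{ord}_v(\bm{y})\geq\min_j\{\mathrm{ord}_v(\kappa_j)-q^ja_v\}\geq -\max_j\{q^ja_v-\mathrm{ord}_v(\kappa_j),0\}.
\]

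Finally, I need to check that the divisor $\mathcal{A}(D)$ in \eqref{Eq:divAD} is well defined, i.e.\ that only finitely many of its coefficients are non-zero. Since $D$ has finite support and each nonzero $\kappa_j$ has only finitely many places with $\mathrm{ord}_v(\kappa_j)<0$, the coefficient vanishes at all but finitely many $v$. (If $\kappa_j=0$, then $\mathrm{ord}_v(\kappa_j)=+\infty$ and that term simply drops out of the maximum.) The divisor is effective because the maximum includes $0$.

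I do not expect a serious obstacle. The only points needing care are the convention $\mathrm{ord}_v(0)=\infty$ and the identification of $\mathcal{L}(D')\otimes\mathbb{F}_q[t]$ with a coefficientwise valuation condition. The latter holds because $\mathbb{F}_q[t]$ has the $\mathbb{F}_q$-basis $\{t^i\}$, so $\mathcal{L}(D')\otimes\mathbb{F}_q[t]=\bigoplus_i\mathcal{L}(D')t^i$ inside $L[t]$.
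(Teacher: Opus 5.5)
Your proof is correct and follows the same route as the paper: bound $\ord_v(\kappa_j\bm{x}^{(j)})\geq\ord_v(\kappa_j)-q^ja_v$ and apply the ultrametric inequality place by place. Your additional checks fill in details the paper leaves implicit, namely the coefficientwise description of $\mathcal{L}(D')\otimes_{\mathbb{F}_q}\mathbb{F}_q[t]$ and the finiteness and effectivity of $\mathcal{A}(D)$.
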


    \begin{proof}
        Since $\bm{x}\in\mathcal{L}(D)\otimes_{\mathbb{F}_q}\mathbb{F}_q[t]$, for each $v\in M_L$ we have
        \[
            \ord_v(\bm{x})\geq -a_v.
        \]
        It follows that
        \begin{align*}
            \ord_v(\bm{y})&=\ord_v(\mathcal{A}\bm{x})\\
            &=\ord_v(\kappa_r\bm{x}^{(r)}+\cdots+\kappa_1\bm{x}^{(1)}+\kappa_0\bm{x})\\
            &\geq\min_{0\leq j\leq r}\{\ord_v(\kappa_j)+q^j\ord_v(\bm{x})\}\\
            &\geq\min_{0\leq j\leq r}\{\ord_v(\kappa_j)-q^ja_v\}.
        \end{align*}
        This leads to the desired result.
    \end{proof}
    
    \begin{lemma}\label{Lem:Inductive_Bounds_New}
        Let $D_i\in\mathrm{Div}(L)$ be effective divisors for each $1\leq i\leq d$. Let $\mathcal{A}=(\mathcal{A}_{ij})\in\Mat_{d}(L[t,\tau])$ be an upper triangular matrix with $\mathcal{A}_{ii}\neq 0$ and $\bm{y}=(y_1,\dots,y_d)^{\tr}\in\Mat_{d\times 1}(L[t])$ with $y_i\in\mathcal{L}(D_i)\otimes_{\mathbb{F}_q}\mathbb{F}_q[t]$. If $\bm{x}=(x_1,\dots,x_d)^\tr\in\Mat_{d\times 1}(L[t])$ satisfies $\mathcal{A}\bm{x}=\bm{y}$, then for each $1\leq i\leq d$ we have $x_i\in\mathcal{L}(\mathscr{D}_i)\otimes_{\mathbb{F}_q}\mathbb{F}_q[t]$, where $\mathscr{D}_i\in\mathrm{Div}(L)$ are effective divisors recursively defined by
        \[
            \mathscr{D}_d:=(D_d)_{\mathcal{A}_{dd}}
        \]
        as in \eqref{E:divA}, and for $1\leq i\leq d-1$,
        \[
            \mathscr{D}_i:=\big(D_i\vee\mathcal{A}_{i(i+1)}(\mathscr{D}_{i+1})\vee\cdots\vee\mathcal{A}_{id}(\mathscr{D}_{d})\big)_{\mathcal{A}_{ii}}\in\mathrm{Div}(L)
        \]
        as in \eqref{E:divA} and $\eqref{Eq:divAD}$.
    \end{lemma}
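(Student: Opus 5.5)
The plan is a backward (descending) induction on the row index $i$, starting at $i=d$ and moving up to $i=1$. This exploits that $\mathcal{A}$ is upper triangular, so the $i$-th equation of $\mathcal{A}\bm{x}=\bm{y}$ reads
\[
\mathcal{A}_{ii}x_i=y_i-\sum_{j=i+1}^{d}\mathcal{A}_{ij}x_j .
\]
Only $x_i,\dots,x_d$ appear in it.

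For the base case $i=d$, the equation is simply $\mathcal{A}_{dd}x_d=y_d$ with $y_d\in\mathcal{L}(D_d)\otimes_{\mathbb{F}_q}\mathbb{F}_q[t]$. Lemma~\ref{Lem:1D_Case_New} then gives directly $x_d\in\mathcal{L}\big((D_d)_{\mathcal{A}_{dd}}\big)\otimes_{\mathbb{F}_q}\mathbb{F}_q[t]=\mathcal{L}(\mathscr{D}_d)\otimes_{\mathbb{F}_q}\mathbb{F}_q[t]$.

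For the inductive step, fix $i<d$ and assume $x_j\in\mathcal{L}(\mathscr{D}_j)\otimes_{\mathbb{F}_q}\mathbb{F}_q[t]$ for all $j>i$. Lemma~\ref{Lem:1D_Case_New_2} gives $\mathcal{A}_{ij}x_j\in\mathcal{L}\big(\mathcal{A}_{ij}(\mathscr{D}_j)\big)\otimes_{\mathbb{F}_q}\mathbb{F}_q[t]$. Set
\[
D':=D_i\vee\mathcal{A}_{i(i+1)}(\mathscr{D}_{i+1})\vee\cdots\vee\mathcal{A}_{id}(\mathscr{D}_d).
\]
Since all the divisors involved are effective, each of the spaces $\mathcal{L}(D_i)\otimes\mathbb{F}_q[t]$ and $\mathcal{L}\big(\mathcal{A}_{ij}(\mathscr{D}_j)\big)\otimes\mathbb{F}_q[t]$ lies in $\mathcal{L}(D')\otimes_{\mathbb{F}_q}\mathbb{F}_q[t]$. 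The latter is closed under addition, so the right-hand side $y_i-\sum_{j>i}\mathcal{A}_{ij}x_j$ lies in it. Applying Lemma~\ref{Lem:1D_Case_New} to the one-dimensional equation $\mathcal{A}_{ii}x_i=(\text{RHS})$ yields $x_i\in\mathcal{L}(D'_{\mathcal{A}_{ii}})\otimes_{\mathbb{F}_q}\mathbb{F}_q[t]=\mathcal{L}(\mathscr{D}_i)\otimes_{\mathbb{F}_q}\mathbb{F}_q[t]$, which completes the induction.

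Two routine facts underlie these steps, and checking them is where I would spend the most care. The first is the identification of $\mathcal{L}(D)\otimes_{\mathbb{F}_q}\mathbb{F}_q[t]$ with $\{f\in L[t]\mid \ord_v(f)\geq -a_v \ \text{for all } v\}$, where $\ord_v$ of a polynomial is the minimum over its coefficients. This holds because $\mathcal{L}(D)$ is an $\mathbb{F}_q$-space and membership is checked coefficientwise in $t$. With this description, monotonicity under $D\le D'$, closure under sums, and the meaning of $\vee$ are all immediate.

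The second fact is that Lemma~\ref{Lem:1D_Case_New} needs $\mathcal{A}_{ii}\neq 0$ with a well-defined leading $\tau$-coefficient $\kappa_r\neq 0$. This is guaranteed by the hypothesis. One should also note that if $r=0$, the formula in \eqref{E:divA} still makes sense, with the first family of terms empty.
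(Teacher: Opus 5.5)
Your proposal is correct and follows the paper's own proof: descending induction on $i$, with Lemma~\ref{Lem:1D_Case_New} handling the base case $\mathcal{A}_{dd}x_d=y_d$, and Lemmas~\ref{Lem:1D_Case_New_2} and~\ref{Lem:1D_Case_New} combined in the inductive step. Your coefficientwise description of $\mathcal{L}(D)\otimes_{\mathbb{F}_q}\mathbb{F}_q[t]$ just makes explicit the containment of the right-hand side in $\mathcal{L}(D')\otimes_{\mathbb{F}_q}\mathbb{F}_q[t]$, which the paper leaves implicit.
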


    \begin{proof}
        For the case of $i=d$, since $\mathcal{A}$ is upper triangular, the last row of the relation $\mathcal{A}\bm{x}=\bm{y}$ is simply $\mathcal{A}_{dd}x_d=y_d$. Thus, Lemma~\ref{Lem:1D_Case_New} shows that $x_d\in\mathcal{L}(\mathscr{D}_d)\otimes_{\mathbb{F}_q}\mathbb{F}_q[t]$. Now we fix $i$ and assume that for each $i< j\leq d$ we have shown that $x_j\in\mathcal{L}(\mathscr{D}_j)\otimes_{\mathbb{F}_q}\mathbb{F}_q[t]$. The $i$-th row of the difference equation $\mathcal{A}\bm{x}=\bm{y}$ provides the difference equation
        \[
            \mathcal{A}_{ii}x_i+\mathcal{A}_{i(i+1)}x_{i+1}+\cdots+\mathcal{A}_{ir}x_r=y_i,
        \]
        or equivalently
        \[
            \mathcal{A}_{ii}x_i=y_i-(\mathcal{A}_{i(i+1)}x_{i+1}+\cdots+\mathcal{A}_{ir}x_r).
        \]
        Then Lemma~\ref{Lem:1D_Case_New}, Lemma~\ref{Lem:1D_Case_New_2}, and the hypothesis imply that $x_i\in\mathcal{L}(\mathscr{D}_i)\otimes_{\mathbb{F}_q}\mathbb{F}_q[t]$, which leads to the desired assertion.
    \end{proof}

    We can now prove Theorem~\ref{Thm:Sol_Divisor}.
    
    \begin{proof}[Proof of Theorem~\ref{Thm:Sol_Divisor}]
        We first prove the result when $\partial\phi_t$ is in Jordan canonical form. By applying Theorem~\ref{Thm:Triangularization} to the case of $B=\phi_t-t$ with any divisor satisfying \eqref{Eq:Divisor_Bound}. Then we get an explicitly constructed lower triangular matrix $A_D$ of the form \eqref{Eq:Lower_Triangular_A} such that $\mathcal{A}:=A_D(\phi_t-t)$ is upper triangular. For $1\leq j\leq \ell$, let $Q_j:=A_DP_j=(Q_j^{[1]},\dots,Q_j^{[d]})^\tr\in\Mat_{d\times 1}(L)$. For $1\leq i\leq \ell$, consider the divisor of poles $D_i:=\mathrm{div}(Q_1^{[i]},\dots,Q_\ell^{[i]})_\infty\in\mathrm{Div}(L)$, as defined in \eqref{E:divinfty} for the matrix $(Q_1^{[i]},\dots,Q_{\ell}^{[i]})\in\Mat_{1\times \ell}(L)$. Then for any $\ell$-tuple $(a_1,\dots,a_\ell)\in\mathbb{F}_q[t]^{\oplus\ell}$, if we define
        \[
            \bm{y}:=\bm{y}(a_1,\dots,a_\ell):=a_1Q_1+\cdots+a_\ell Q_\ell\in\Mat_{d\times 1}(L[t])
        \]
        and write $\bm{y}=(y_1,\dots,y_d)^\tr$, then we must have $y_i\in\mathcal{L}(D_i)\otimes_{\mathbb{F}_q}\mathbb{F}_q[t]$. It follows from Lemma~\ref{Lem:Inductive_Bounds_New} that there are explicitly constructed effective divisors $\mathscr{D}_i$ such that
        \[
            \mathrm{pr}_1\big(\mathrm{Sol}(\mathcal{A};Q_1,\dots,Q_\ell)\big)\subset\Mat_{d\times 1}(\mathcal{L}(D_{\mathcal{P}})\otimes_{\mathbb{F}_q}\mathbb{F}_q[t]),
        \]
        where $D_{\mathcal{P}}:=\mathscr{D}_1\vee\cdots\vee\mathscr{D}_d$.
        Then the desired result now follows from Lemma~\ref{Lem:Emb} that
        \[
            \mathrm{Sol}_\phi(P_1,\dots,P_\ell)\hookrightarrow\mathrm{Sol}(\mathcal{A};Q_1,\dots,Q_\ell).
        \]

        Now we deal with the case of arbitrary $t$-module $E=(\mathbb{G}_a^d,\phi)$. Since $\partial\phi_t=\theta\Id_d+N$ for some nilpotent matrix $N\in\Mat_{d\times d}(L)$, there is $\mathrm{J} \in \GL_d(L)$ such that $\mathrm{J}(\partial\phi_t)\mathrm{J}^{-1}$ is in Jordan canonical form. Consider the $t$-module $\Tilde{E}=(\mathbb{G}_a^d,\Tilde{\phi})$ defined by $\Tilde{\phi}_t:=\mathrm{J}\phi_t\mathrm{J}^{-1}$, and points $\Tilde{P}_1:=\mathrm{J}P_1,\dots,\Tilde{P}_\ell=\mathrm{J}P_\ell$. Then by the result we just proved above, there is an explicitly constructed effective divisor $D_{\Tilde{\mathcal{P}}}$ such that
        \[
            \mathrm{pr}_1\big(\mathrm{Sol}_{\Tilde{\phi}}(\Tilde{P}_1,\dots,\Tilde{P}_\ell)\big)\subset\Mat_{d\times 1}(\mathcal{L}(D_{\tilde{\mathcal{P}}})\otimes_{\mathbb{F}_q}\mathbb{F}_q[t]).
        \]
        We claim that the following map
        \begin{align*}
            \mathrm{Sol}_{\Tilde{\phi}}(\Tilde{P}_1,\dots,\Tilde{P}_\ell)&\to\mathrm{Sol}_{\phi}(P_1,\dots,P_\ell)\\
            (\Tilde{\bm{x}},a_1,\dots,a_\ell)&\mapsto(\mathrm{J}^{-1}\tilde{\bm{x}},a_1,\dots,a_\ell)
        \end{align*}
        is a bijection. Indeed,
        \[
            (\Tilde{\phi}_t-t)\Tilde{\bm{x}}=a_1\Tilde{P}_1+\cdots+a_\ell\Tilde{P}_\ell
        \]
        is equivalent to
        \[
            \mathrm{J}(\phi_t-t)\mathrm{J}^{-1}\Tilde{\bm{x}}=a_1\mathrm{J}P_1+\cdots+a_\ell \mathrm{J}P_\ell.
        \]
        It holds if and only if
        \[
            (\phi_t-t)(\mathrm{J}^{-1}\Tilde{\bm{x}})=a_1P_1+\cdots+a_\ell P_\ell,
        \]
        which leads to our claim. It follows that if we set
        \[
            D_{\mathcal{P}}
            :=\bigvee_{i,j}(\mathrm{J}^{-1})_{ij}(D_{\Tilde{\mathcal{P}}})
        \]
        with $(\mathrm{J}^{-1})_{ij}(D_{\Tilde{\mathcal{P}}})$ defined as in \eqref{Eq:divAD}, then by the claim just verified, we get
         \[
            \mathrm{pr}_1\big(\mathrm{Sol}_\phi(P_1,\dots,P_\ell)\big)\subset\Mat_{d\times 1}(\mathcal{L}(D_{\mathcal{P}})\otimes_{\mathbb{F}_q}\mathbb{F}_q[t]).
        \]
    \end{proof}

    \begin{example}\label{Ex:3rdCTs_Pts}
        We continue with Example~\ref{Ex:3rdCTs}. Let $P_1=(0,0,1)^\tr$ and $P_2=(0,0,\theta)^\tr$. Then we have
        \[
            Q_1=\begin{pmatrix}
                1 & & \\
                 & 1 & \\
                (t-\theta^q)\tau & \tau & (t-\theta^{q})^2
            \end{pmatrix}\begin{pmatrix}
                0\\
                0\\
                1
            \end{pmatrix}=\begin{pmatrix}
                0\\
                0\\
                (t-\theta^q)^2
            \end{pmatrix}
        \]
        and
        \[
            Q_2=\begin{pmatrix}
                1 & & \\
                 & 1 & \\
                (t-\theta^q)\tau & \tau & (t-\theta^{q})^2
            \end{pmatrix}\begin{pmatrix}
                0\\
                0\\
                \theta
            \end{pmatrix}=\begin{pmatrix}
                0\\
                0\\
                (t-\theta^q)^2\theta
            \end{pmatrix}.
        \]
        Consider
        \[
            \mathcal{A}=\begin{pmatrix}
                \theta-t & 1 & \\
                 & \theta-t & 1\\
                 & & \tau-(t-\theta)(t-\theta^q)^2 
            \end{pmatrix}.
        \]
        Following Lemma~\ref{Lem:1D_Case_New}, for $a_1,a_2\in\mathbb{F}_q[t]$, we see that 
        \[
            a_1(t-\theta^q)^2+a_2(t-\theta^q)^2\theta\in\mathcal{L}\big((2q+1)\cdot(\infty)\big).
        \]
        It follows that if $x_3\in K[t]$ satisfies
        \[
            \big(\tau-(t-\theta)(t-\theta^q)^2\big)(x_3)=a_1(t-\theta^q)^2+a_2(t-\theta^q)^2\theta
        \]
        then we must have $\mathrm{ord}_v(x_3)\geq 0$ for $\infty\neq v\in M_K$ and
        \[
            \mathrm{ord}_\infty(x_3)\geq\min\left\{-\frac{2q+1}{q-1},-\frac{2q+1}{q}\right\}=-2-\frac{3}{q-1}.
        \]
        Hence $\mathscr{D}_3=\left(2+\left\lfloor\frac{3}{q-1}\right\rfloor\right)\cdot(\infty)$ and $x_3\in\mathcal{L}(\mathscr{D}_3)\otimes_{\mathbb{F}_q}\mathbb{F}_q[t]$.
        Inductively, following Lemma~\ref{Lem:Inductive_Bounds_New}, we have $\mathscr{D}_2=\left(1+\left\lfloor\frac{3}{q-1}\right\rfloor\right)\cdot(\infty)$ and $\mathscr{D}_1=\left\lfloor\frac{3}{q-1}\right\rfloor\cdot(\infty)$. In particular, for $x_1,x_2\in K[t]$ with $(\theta-t)x_2+x_3=0$ and $(\theta-t)x_1+x_2=0$, we have
        \[
            x_2\in\mathcal{L}(\mathscr{D}_2)\otimes_{\mathbb{F}_q}\mathbb{F}_q[t]
        \]
        and
        \[
            x_1\in\mathcal{L}(\mathscr{D}_1)\otimes_{\mathbb{F}_q}\mathbb{F}_q[t]
        \]
        For the convenience of later use, we remark that this means $x_1,x_2,x_3\in A[t]$ with
        \[
            \begin{cases}
                \deg_\theta(x_1)\leq\frac{3}{q-1}\\
                \deg_\theta(x_2)\leq 1+\frac{3}{q-1}\\
                \deg_\theta(x_3)\leq 2+\frac{3}{q-1}
            \end{cases}.
        \]
    \end{example}

\subsection{The induced system of linear equations over \texorpdfstring{$\mathbb{F}_q[t]$}{}}
    The main purpose in this subsection is to demonstrate how we determine $\mathrm{Sol}_\phi(P_1,\dots,P_\ell)$ using an explicitly constructed system of linear equations over $\mathbb{F}_q[t]$. Let $V\subset L$ be a finite dimensional $\mathbb{F}_q$-vector space with $n:=\dim_{\mathbb{F}_q}V$. We fix an $\mathbb{F}_q$-basis $\{\gamma_1,\dots,\gamma_n\}$ for $V$.  
    For each $f\in V\otimes_{\mathbb{F}_q}\mathbb{F}_q[t]\subset L[t]$, there are unique $f_1,\dots,f_n\in\mathbb{F}_q[t]$ such that $f=f_1\gamma_1+\cdots+f_n\gamma_n$.
    Indeed, let $f=a_0+\cdots+a_st^s\in L[t]$ with $a_0,\dots,a_s\in V$. Then each of the coefficient can be written uniquely as an $\mathbb{F}_q$-linear combination of $\gamma_1,\dots,\gamma_n$, that is, for each $0\leq i\leq s$ we have
    \[
        a_i=a_i^{[1]}\gamma_1+\cdots+a_i^{[n]}\gamma_n,~a_i^{[j]}\in\mathbb{F}_q.
    \]
    Thus, if we set $f_j:=\sum_{i=0}^sa_i^{[j]}t^i$, then we have the unique expression
    \[
        f=\sum_{i=0}^s(\sum_{j=1}^na_i^{[j]}\gamma_j)t^i=\sum_{j=1}^n(\sum_{i=0}^sa_i^{[j]}t^i)\gamma_j=\sum_{j=1}^nf_j\gamma_j.
    \]
     It induces an $\mathbb{F}_q[t]$-linear bijection
    \begin{align*}
        \iota_V:V\otimes_{\mathbb{F}_q}\mathbb{F}_q[t]&\to\Mat_{1\times n}(\mathbb{F}_q[t])\\
        f&\mapsto(f_1,\dots,f_n).
    \end{align*}
    For $\bm{x}=(x_1,\dots,x_d)^\tr\in\Mat_{d\times 1}(V\otimes_{\mathbb{F}_q}\mathbb{F}_q[t])$, we define
    \[
        \iota_V(\bm{x}):=(\iota_V(x_1),\dots,\iota_V(x_d))\in\Mat_{1\times nd}(\mathbb{F}_q[t]).
    \]

    Now we consider two finite dimensional $\mathbb{F}_q$-vector spaces $V,W\subset L$ with $n:=\dim_{\mathbb{F}_q}V$ and $m:=\dim_{\mathbb{F}_q}W$. Let $\Phi\in L[t,\tau]$ with $\Phi\big(V\otimes_{\mathbb{F}_q}\mathbb{F}_q[t]\big)\subset W\otimes_{\mathbb{F}_q}\mathbb{F}_q[t]$. Since $\Phi$ can be treated as an $\mathbb{F}_q[t]$-linear endomorphism on $L[t]$, the relation 
    \[
        \iota_{W}\bigg(\Phi\big(\iota^{-1}_{V}(\Mat_{1\times n}(\mathbb{F}_q[t]))\big)\bigg)\subset\Mat_{1\times m}(\mathbb{F}_q[t])
    \]
    shows that $\Phi$ induces a matrix $\Phi_{V,W}\in\Mat_{m\times n}(\mathbb{F}_q[t])$. This identification can be extended to the higher dimensional situation as follows. Let $\Phi\in\Mat_{d}(L[t,\tau])$ with
    \[
        \Phi\big(\Mat_{d\times 1}(V\otimes_{\mathbb{F}_q}\mathbb{F}_q[t])\big)\subset\Mat_{d\times 1}(W\otimes_{\mathbb{F}_q}\mathbb{F}_q[t]).
    \]
    By a similar discussion as above, $\Phi$ induces a matrix $\Phi_{V,W}(d)\in\Mat_{md\times nd}(\mathbb{F}_q[t])$.

    The main result obtained in this section is the following statement, which provides an explicit way to compute the solution space $\mathrm{Sol}_\phi(P_1,\dots,P_\ell)$.

    \begin{theorem}\label{Thm:Induced_Linear_System}
        Let $L$ be a finite extension over $K$, $E=(\mathbb{G}_a^d,\phi)$ be a $d$-dimensional $t$-module defined over $L$ and $P_1,\dots,P_\ell\in E(L)$. Then there is an explicit constructed matrix $\mathbb{B}$ with entries in $\mathbb{F}_q[t]$ and $\deg_t(\mathbb{B})\leq 1$ such that we have an explicit surjective $\mathbb{F}_q[t]$-linear map
        \[
            \Ker(\mathbb{B})\twoheadrightarrow\mathrm{Sol}_\phi(P_1,\dots,P_\ell).
        \]
    \end{theorem}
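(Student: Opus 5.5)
By Theorem~\ref{Thm:Sol_Divisor} there is an explicit effective divisor $D_{\mathcal{P}}$ with $\mathrm{pr}_1(\mathrm{Sol}_\phi(P_1,\dots,P_\ell))\subset\Mat_{d\times 1}(V\otimes_{\mathbb{F}_q}\mathbb{F}_q[t])$, where $V:=\mathcal{L}(D_{\mathcal{P}})$. Its dimension $n_V:=\dim_{\mathbb{F}_q}V$ and an $\mathbb{F}_q$-basis $\{\gamma_1,\dots,\gamma_{n_V}\}$ are effectively computable by Riemann--Roch. The plan is to linearize the defining equation $(\phi_t-t)\bm{x}=\sum_i a_iP_i$ over $\mathbb{F}_q[t]$ using the coordinate maps $\iota_V$ and $\iota_W$ for a suitable target space $W$.

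First, choose $W\subset L$ finite dimensional over $\mathbb{F}_q$ so that both sides of the equation land in $W\otimes\mathbb{F}_q[t]$.
\begin{itemize}
\item Write $\phi_t=\sum_{k=0}^r C_k\tau^k$ with $C_k\in\Mat_d(L)$. For $x_j=\sum_s f_s\gamma_s$ with $f_s\in\mathbb{F}_q[t]$, each entry of $\phi_t\bm{x}$ is an $\mathbb{F}_q[t]$-combination of the finitely many elements $(C_k)_{ij}\gamma_s^{q^k}$, since $\tau$ fixes $\mathbb{F}_q[t]$.
\item The term $t\bm{x}$ has coordinates $t f_s$ on the $\gamma_s$.
\item The right-hand side is an $\mathbb{F}_q[t]$-combination of the entries $P_i^{[j]}$.
\end{itemize}
So take $W:=\mathrm{Span}_{\mathbb{F}_q}\{\gamma_s,(C_k)_{ij}\gamma_s^{q^k},P_i^{[j]}\}$, or equivalently $W=\mathcal{L}(D')$ for a divisor $D'$ dominating $\phi_t(D_{\mathcal{P}})\vee D_{\mathcal{P}}\vee\mathrm{div}(P)_\infty$ built as in \eqref{Eq:divAD}, and fix an $\mathbb{F}_q$-basis of $W$ with $m_W:=\dim W$. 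Then $\phi_t$ induces $(\phi_t)_{V,W}(d)\in\Mat_{m_Wd\times n_Vd}(\mathbb{F}_q)$, a constant matrix because $\phi_t$ has no $t$, and $t$ induces $t\cdot I_{V,W}(d)$, where $I_{V,W}$ is the constant inclusion matrix. Hence $(\phi_t-t)_{V,W}(d)$ has $t$-degree at most $1$. Similarly, the map $(a_1,\dots,a_\ell)\mapsto\sum a_iP_i$ is given by a constant matrix $\mathbb{P}\in\Mat_{m_Wd\times\ell}(\mathbb{F}_q)$ whose columns are $\iota_W(P_i)$.

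Set $\mathbb{B}:=\big((\phi_t-t)_{V,W}(d)\ \big|\ -\mathbb{P}\big)$, of size $m_Wd\times(n_Vd+\ell)$ with $\deg_t\mathbb{B}\le1$. Define the map $\Ker(\mathbb{B})\to\mathrm{Sol}_\phi(P_1,\dots,P_\ell)$ by $(\bm{b},a_1,\dots,a_\ell)\mapsto(\iota_V^{-1}(\bm{b}),a_1,\dots,a_\ell)$.
\begin{itemize}
\item \emph{Well-definedness:} $\iota_W$ is an $\mathbb{F}_q[t]$-linear bijection onto coordinates, so $\mathbb{B}(\bm{b},\bm{a})^\tr=0$ is exactly $\iota_W\big((\phi_t-t)\bm{x}-\sum a_iP_i\big)=0$.
\item \emph{Surjectivity:} any solution has $\bm{x}\in\Mat_{d\times1}(V\otimes\mathbb{F}_q[t])$ by Theorem~\ref{Thm:Sol_Divisor}, so it has a preimage.
\end{itemize}
The map is in fact bijective. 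Composing with $\rho_1$ from Theorem~\ref{Thm:SpPoly} then gives Theorem~\ref{Thm:Main_Thm}, after first discarding trivial $\mathbb{F}_q$-relations among the $P_i$.

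The main obstacle is checking that $\phi_t$ really maps $V\otimes\mathbb{F}_q[t]$ into $W\otimes\mathbb{F}_q[t]$ with $W$ both finite dimensional and effectively computable. The spanning-set description handles this, since Frobenius commutes with $\mathbb{F}_q[t]$-coefficients. Computing coordinates in $W$ is then linear algebra over $\mathbb{F}_q$; a Riemann--Roch basis via \eqref{Eq:divAD} is the cleaner choice if one wants an explicit bound.
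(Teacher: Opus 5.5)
Your proof is correct and follows the paper's route. Theorem~\ref{Thm:Sol_Divisor} confines $\mathrm{pr}_1(\mathrm{Sol}_\phi(P_1,\dots,P_\ell))$ to $\Mat_{d\times 1}(\mathcal{L}(D_{\mathcal{P}})\otimes_{\mathbb{F}_q}\mathbb{F}_q[t])$, and expanding $(\phi_t-t)\bm{x}=\sum_i a_iP_i$ in fixed $\mathbb{F}_q$-bases via $\iota_V,\iota_W$ gives $\mathbb{B}$ with $nd+\ell$ columns and $\deg_t(\mathbb{B})\leq 1$.

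Your explicit choices are $V=\mathcal{L}(D_{\mathcal{P}})$ and a target $W$ such that $\phi_t-t$ maps $\Mat_{d\times 1}(V\otimes_{\mathbb{F}_q}\mathbb{F}_q[t])$ into $\Mat_{d\times 1}(W\otimes_{\mathbb{F}_q}\mathbb{F}_q[t])$, with $W$ containing all entries of the $P_i$. These are essentially what Algorithm~\ref{algorithm_B} does, with $W=\mathcal{L}(D_{\Phi\bm{x}}\vee D_{\bm{y}})$, and they are more careful than the written proof. That proof uses a $V$ with $\mathrm{pr}_1(\mathrm{Sol}_\phi(P_1,\dots,P_\ell))=\Mat_{d\times 1}(V\otimes_{\mathbb{F}_q}\mathbb{F}_q[t])$, which need not exist (already in Task~(2) of the additional example every solution has second coordinate $0$ but nonzero first coordinate) and is not explicit. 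It also uses the target $\mathcal{L}(D_{\bm{y}})$, which need not contain the image of $\phi_t-t$ on $V\otimes_{\mathbb{F}_q}\mathbb{F}_q[t]$.
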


    \begin{proof}
        By Theorem~\ref{Thm:Sol_Divisor}, there is an explicitly constructed effective divisor $D_{\mathcal{P}}\in\mathrm{Div}(L)$ with
        \[
            \mathrm{pr}_1\big(\mathrm{Sol}_\phi(P_1,\dots,P_\ell)\big)\subset\Mat_{d\times 1}(\mathcal{L}(D_{\mathcal{P}})\otimes_{\mathbb{F}_q}\mathbb{F}_q[t]).
        \]
        Since $\mathcal{L}(D_{\mathcal{P}})$ is a finite dimensional $\mathbb{F}_q$-vector space, $\Mat_{d\times 1}(\mathcal{L}(D_{\mathcal{P}})\otimes_{\mathbb{F}_q}\mathbb{F}_q[t])$ is a free $\mathbb{F}_q[t]$-module of finite rank. It follows that $\mathrm{pr}_1\big(\mathrm{Sol}_\phi(P_1,\dots,P_\ell)\big)$ is also a free $\mathbb{F}_q[t]$-module of finite rank. In particular, there is a finite dimensional $\mathbb{F}_q$-vector space $V\subset L$ with $n:=\dim_{\mathbb{F}_q}V\leq\dim_{\mathbb{F}_q}\mathcal{L}(D_{\mathcal{P}})$ such that $\mathrm{pr}_1\big(\mathrm{Sol}_\phi(P_1,\dots,P_\ell)\big)=\Mat_{d\times 1}(V\otimes_{\mathbb{F}_q}\mathbb{F}_q[t])$.
        
        On the other hand, we may consider the divisor of poles $D_{\bm{y}}:=\mathrm{div}(P_1,\dots,P_\ell)_\infty$ as defined in \eqref{E:divinfty} for the matrix $(P_1,\dots,P_\ell)\in\Mat_{d\times\ell}(L)$.
        Then for any $a_1,\dots,a_\ell\in\mathbb{F}_q[t]$, we have 
        \[
            a_1P_1+\cdots+a_\ell P_\ell\in\Mat_{d\times 1}\big(\mathcal{L}(D_{\bm{y}})\otimes_{\mathbb{F}_q}\mathbb{F}_q[t]\big).
        \]
        Consider $\Phi:=\phi_t-t$ and set $m:=\dim_{\mathbb{F}_q}\mathcal{L}(D_{\bm{y}})$. For any $\bm{x}\in\mathrm{pr}_1\big(\mathrm{Sol}_\phi(P_1,\dots,P_\ell)\big)$ we have $\Phi\bm{x}=(\phi_t-t)\bm{x}\in\Mat_{d\times 1}\big(\mathcal{L}(D_{\bm{y}})\otimes_{\mathbb{F}_q}\mathbb{F}_q[t]\big)$. It follows that
        \[
            \Phi\bigg(\mathrm{pr}_1\big(\mathrm{Sol}_\phi(P_1,\dots,P_\ell)\big)\bigg)=\Phi\big(\Mat_{d\times 1}(V\otimes_{\mathbb{F}_q}\mathbb{F}_q[t])\big)\subset\Mat_{d\times 1}\big(\mathcal{L}(D_{\bm{y}})\otimes_{\mathbb{F}_q}\mathbb{F}_q[t]\big).
        \]
        If we fix $\mathbb{F}_q$-basis for $V$ and $\mathcal{L}(D_{\bm{y}})$, then as we discuss above, there is an induced matrix $\Phi_{V,\mathcal{L}(D_{\bm{y}})}\in\Mat_{md\times nd}(\mathbb{F}_q[t])$ such that for any $(\bm{x},a_1,\dots,a_\ell)\in\mathrm{Sol}_\phi(P_1,\dots,P_\ell)$ we have
        \begin{equation}\label{Eq:Induced_Relation_1}
            \iota_{V}(\bm{x})\Phi_{V,\mathcal{L}(D_{\bm{y}})}=a_1\iota_{\mathcal{L}(D_{\bm{y}})}(P_1)+\cdots+a_\ell\iota_{\mathcal{L}(D_{\bm{y}})}(P_\ell).
        \end{equation}
        We express
        \[
            \iota_V(\bm{x})=(x_{11},\dots,x_{1n},x_{21},\dots,\dots, x_{dn})\in\Mat_{1\times nd}(\mathbb{F}_q[t])
        \]
        and
        \[
            \iota_{\mathcal{L}(D_{\bm{y}})}(P_i)=(P^{[i]}_{11},\dots,P^{[i]}_{1m},P^{[i]}_{21},\dots,P^{[i]}_{dm})\in\Mat_{1\times md}(\mathbb{F}_q)
        \]
        for each $1\leq i\leq \ell$. Then \eqref{Eq:Induced_Relation_1} becomes
        \begin{equation}\label{Eq:Induced_Relation_2}
            (x_{11},\dots,x_{1n},x_{21},\dots,x_{dn})\Phi_{V,\mathcal{L}(D_{\bm{y}})}=\big(\sum_{i=1}^\ell a_iP^{[i]}_{11},\dots,\sum_{i=1}^\ell a_iP^{[i]}_{dm}\big).
        \end{equation}
        In particular, \eqref{Eq:Induced_Relation_2} can be regarded as a system of linear relations in $x_{ij}$ and $a_i$ over $\mathbb{F}_q[t]$. It totally has $nd+\ell$ parameters and $md$ relations. Thus, it induces a matrix $\mathbb{B}\in\Mat_{md\times(nd+\ell)}(\mathbb{F}_q[t])$, which gives the desired result.
    \end{proof}

    \begin{example}
        We continue with Example~\ref{Ex:3rdCTs_Pts}. For simplicity, we assume that $q>4$. In this case, if $\bm{x}=(x_1,x_2,x_3)^\tr\in\Mat_{3\times 1}(K[t])$ satisfies
        \[
            \begin{pmatrix}
                \theta-t & 1 & \\
                 & \theta-t & 1\\
                 \tau & & \theta-t
            \end{pmatrix}\begin{pmatrix}
                x_1\\
                x_2\\
                x_3
            \end{pmatrix}=a_1\begin{pmatrix}
                0\\
                0\\
                1
            \end{pmatrix}+a_2\begin{pmatrix}
                0\\
                0\\
                \theta
            \end{pmatrix}
        \]
        for some $a_1,a_2\in\mathbb{F}_q[t]$, then we must have $x_1,x_2,x_3\in A[t]$ and $\deg_\theta(x_i)<i$ for $i=1,2,3$. Thus, we may write
        \[
            \begin{cases}
                x_1=x_1^{[0]}\\
                x_2=x_2^{[0]}+x_2^{[1]}\theta\\
                x_3=x_3^{[0]}+x_3^{[1]}\theta+x_3^{[2]}\theta^2
            \end{cases}
        \]
        for some $x_i^{[j]}\in\mathbb{F}_q[t]$. It follows that
        \begin{equation}\label{Eq:}
            \begin{cases}
                (-tx_1^{[0]}+x_2^{[0]})+(x_1^{[0]}+x_2^{[1]})\theta=0\\
                (-tx_2^{[0]}+x_3^{[0]})+(x_2^{[0]}-tx_2^{[1]}+x_3^{[1]})\theta+(x_2^{[1]}+x_3^{[2]})\theta^2=0\\
                (x_1^{[0]}-tx_3^{[0]})+(x_3^{[0]}-tx_3^{[1]})\theta+(x_3^{[1]}-tx_3^{[2]})\theta^2+x_3^{[2]}\theta^3=a_1+a_2\theta
            \end{cases}.
        \end{equation}
        Then the induced matrix $\mathbb{B}$ is given by
        \[
            \mathbb{B}=\begin{pmatrix}
                -t & 1 & & & & & & \\
                1 & & 1 & & & & & \\
                 & -t & & 1 & & & & \\
                 & 1 & -t & & 1 & & \\
                 & & 1 & & & 1 & & \\
                 1 & & & -t & & & -1 & \\
                 & & & 1 & -t & & & -1 \\
                 & & & & 1 & -t & & \\
                 & & & & & 1 & & &
            \end{pmatrix}\in\Mat_{9\times 8}(\mathbb{F}_q[t])
        \]
        which is equivalent to the system of linear equations
        \[
            \mathbb{B}\begin{pmatrix}
                x_1^{[0]}\\
                x_2^{[0]}\\
                x_2^{[1]}\\
                x_3^{[0]}\\
                x_3^{[1]}\\
                x_3^{[2]}\\
                a_1\\
                a_2
            \end{pmatrix}=\begin{pmatrix}
                0\\
                0\\
                0\\
                0\\
                0\\
                0\\
                0\\
                0\\
                0
            \end{pmatrix}.
        \]
        A straightforward calculation shows that $\Ker(\mathbb{B})$ contains only the zero element. Then Theorem~\ref{Thm:Induced_Linear_System} implies that $\mathrm{Sol}_{\mathbf{C}^{\otimes 3}}(P_1,P_2)$ also contains only the trivial solution. Finally, by Theorem~\ref{Thm:SpPoly}(1), we conclude that $P_1=(0,0,1)^\tr,P_2=(0,0,\theta)^\tr\in\mathbf{C}^{\otimes 3}(K)$ are $\mathbb{F}_q[t]$-linearly independent.
    \end{example}

\section{Algorithms and examples}
    Our algorithms on computing the relation module can be described as follows. We have implemented them in \SageMath \cite{Sage}. Our package is available on GitHub \cite{CHN26}.

\begin{algorithm}[H]
\caption{Compute the lower triangular matrix $A_D$}\label{algorithm_upper}
\KwIn{A $d$-dimensional $t$-module $E=(\mathbb{G}_a^d,\phi)$ defined over $L$.}
\KwOut{A matrix $\mathrm{J}\in\GL_d(L)$ and a lower triangular matrix $A_D\in \Mat_d(L[t,\tau])$ such that, after setting $\widetilde{\phi}_t:=\mathrm{J}\phi_t\mathrm{J}^{-1}$, the matrix $A_D(\widetilde{\phi}_t-t)$ is upper triangular with all diagonal entries nonzero, and $\mathrm{J}(\partial\phi_t)\mathrm{J}^{-1}$ is in Jordan canonical form.}

Choose $\mathrm{J}\in \GL_d(L) $ such that
\[
\mathrm{J}(\partial\phi_t)\mathrm{J}^{-1}
\]
is in Jordan canonical form\footnote{Such a $\mathrm{J}$ exists since all eigenvalues of $\partial \phi_t$ are equal to $\theta$.}\;

Set $\widetilde{\phi}_t:=\mathrm{J}\phi_t\mathrm{J}^{-1}$\;

Set $B:=\widetilde{\phi}_t-t$, $r:=\deg_{\tau}(B)$, and $A_D:=\Id_d$\;

\For{$k=2,\dots,d$}{
    Set $s_k:=r(k-1)$\;

    Write
    \[
    a_{kj}=\sum_{n=0}^{s_k} a_{kj}^{[n]}\tau^n,
    \]
    where $a_{kj}^{[n]}\in L[t]$\;

    Construct the matrix $\widetilde{B}_k\in \Mat_{(r+s_k+1)(k-1)\times (s_k+1)k}(L[t])$ corresponding to the system $(\mathbb{E}_k)$ in the proof of Theorem~\ref{Thm:Triangularization}; equivalently, $\widetilde{B}_k$ encodes the condition that the first $k-1$ entries of
    \[
    (a_{k1},\dots,a_{kk})B
    \]
    vanish\;

    Compute the kernel $\Ker(\widetilde{B}_k)$ over $L[t]$\;

    Choose a vector in $\Ker(\widetilde{B}_k)$ for which $a_{kk}\neq 0$, and reconstruct from it the row
    \[
    (a_{k1},\dots,a_{kk})\in \Mat_{1\times k}(L[t,\tau]).
    \]

    Place this row into the $k$-th row of $A_D$\;
}

\Return{$\mathrm{J}$ and $A_D$}\;
\end{algorithm}

\begin{algorithm}[H]
\caption{Compute $\mathbb{B}$ and $\mathbb{F}_q[t]$-linear relations}\label{algorithm_B}
\KwIn{
\begin{itemize}
    \item a $d$-dimensional $t$-module $E=(\mathbb{G}_a^d,\phi)$ over $L$,
    \item algebraic points $P_1,\dots,P_\ell \in E(L)$,
    \item Matrices $\mathrm{J}$ and $A_D$ obtained from Algorithm \ref{algorithm_upper}.
\end{itemize}
}
\KwOut{A matrix $\mathbb{B}$ over $\mathbb{F}_q[t]$ as in Theorem \ref{Thm:Induced_Linear_System}, the space $\Ker(\mathbb{B})$, and $\mathbb{F}_q[t]$-linear relations among $P_1,\dots,P_\ell$.}

Set $\widetilde{\phi}_t:=\mathrm{J}\phi_t\mathrm{J}^{-1}$, $\widetilde{P}_i=\mathrm{J}P_i$\;

Set $\Phi:=\widetilde{\phi}_t-t$ and $\mathcal{A}:=A_D(\widetilde{\phi}_t-t)$\;

Compute $\mathscr{D}_i$ for $1\le i\le d$ from the upper triangular system
\[
\mathcal{A}\bm{x}=A_D(a_1\widetilde{P}_1+\cdots+a_\ell \widetilde{P}_\ell)
\] as in Lemma~\ref{Lem:Inductive_Bounds_New}\;

\For{$i=1,\dots,d$}{
    Compute a basis of $\mathcal{L}(\mathscr{D}_i)$\;
}

Compute $D_{\bm{y}}:=\operatorname{div}(\widetilde{P}_1,\dots,\widetilde{P}_\ell)_{\infty}$\;

Write $\Phi=(\Phi_{ij})\in \Mat_{d\times d}(L[t,\tau])$\;

Compute $\Phi_{ij}(\mathscr{D}_j)$ for all $i,j$ as defined in \eqref{Eq:divAD}\;

Compute
\[
D_{\Phi\bm{x}}
:=
\bigvee_{i,j}\Phi_{ij}(\mathscr{D}_j);
\]

Compute a basis of $\mathcal{L}(D_{\Phi\bm{x}}\vee D_{\bm{y}})$\;

Introduce the unknowns $x_i\in \mathcal{L}(\mathscr{D}_i)\otimes_{\mathbb{F}_q}\mathbb{F}_q[t]$ for $1\le i\le d$, and $a_1,\dots,a_\ell\in \mathbb{F}_q[t]$\;

Expand the equation
\[
\Phi\bm{x}=\sum_{s=1}^{\ell} a_s\widetilde{P}_s
\]
in the chosen $\mathbb{F}_q$-basis of $\mathcal{L}(D_{\Phi\bm{x}}\vee D_{\bm{y}})$ to obtain a matrix $\mathbb{B}\in \Mat_{m\times n}(\mathbb{F}_q[t])$, where
\[
m\leq d\cdot\dim_{\mathbb{F}_q}\mathcal{L}(D_{\Phi\bm{x}}\vee D_{\bm{y}}),
\qquad
n=\sum_{i=1}^d \dim_{\mathbb{F}_q}\mathcal{L}(\mathscr{D}_i)+\ell.
\]

Compute $\Ker(\mathbb{B})$\;

Project $\Ker(\mathbb{B})$ onto its last $\ell$ coordinates to obtain tuples $(a_1,\dots,a_\ell)\in \mathbb{F}_q[t]^\ell$ satisfying $\sum_{s=1}^{\ell}\widetilde{\phi}_{a_s}(\widetilde{P}_s)=0$, equivalently, $\sum_{s=1}^{\ell}\phi_{a_s}(P_s)=0$\;

\Return{$\mathbb{B}$, $\Ker(\mathbb{B})$, and the $\mathbb{F}_q[t]$-linear relations}\;
\end{algorithm}

\subsection{Additional Example}

\begin{example}Let $L=K(c)$, where $c^{q-1}\in K^{\times}$. Consider the $t$-module $\phi$ such that
\[
\phi_t=
\begin{pmatrix}
\theta & c^{-1}\\
0 & \theta+\tau
\end{pmatrix}.
\]
By Algorithm~\ref{algorithm_upper} or computing directly, we obtain 
\[\mathrm{J}= \begin{pmatrix}
    c & 0 \\ 0 & 1\end{pmatrix},\quad A_D= \Id_2.\]

For
\[
q=5, \quad c^{q-1}=\frac{1}{\theta}+2,
\]
we illustrate Algorithm~\ref{algorithm_B} through the following two tasks:
\begin{enumerate}
\item verify the point
\[
P:=\begin{pmatrix}\theta\\ 0\end{pmatrix}\in \Mat_{2\times 1}(L)
\]
is not an $\FF_q[t]$-torsion point of $\phi$;
\item detect the relation \[
-(t^2+1)P_1+tP_2=0.
\]
between the points
\[
P_1:=\begin{pmatrix}\theta\\ 0\end{pmatrix},
\qquad
P_2:=\begin{pmatrix}\theta^2+1\\ 0\end{pmatrix}
\in \Mat_{2\times 1}(L).
\]
\end{enumerate}
\end{example}

\begin{proof}[Task (1)]We set \[\widetilde{\phi}_t=\mathrm{J}\phi_t\mathrm{J}^{-1}=\begin{pmatrix}
    \theta & 1\\
    0 & \theta+\tau
\end{pmatrix},\quad \widetilde{P}=\mathrm{J}P=\begin{pmatrix}
    \theta c\\
    0 
\end{pmatrix}.\]
First, we consider \[\mathcal{A}\bm{x}=aQ,\] where $ \mathcal{A}=A_D(\widetilde{\phi}_t-t)$, $Q = A_D\widetilde{P}$, $\bm{x}=(x_1,x_2)^\tr\in \Mat_{2\times 1}(L[t])$, and $a\in \FF_q[t]$. We follow lemmas in \S 3.3 to find $\mathscr{D}_i\in\Div(L)$ such that $x_i\in\mathcal{L}(\mathscr{D}_i)\otimes \FF_q[t].$ Note that 
\[\mathcal{A}=\begin{pmatrix}
    \theta-t & 1\\
    0 & (\theta-t)+\tau
\end{pmatrix},\quad Q =\begin{pmatrix}
\theta c\\
0
\end{pmatrix}.\]
Let the places $(g_i(c))\in \Div(L)$ correspond to the decomposition
\[
c^{q-1}-2=\prod_i g_i(c)
\]
over $\FF_q(c)$, and let $(c)$ denote the place corresponding to $c=0$, and let $(\infty_L)$ denote the infinite place. Consider the divisors of poles
\begin{align*}
D_1 =\divisor(\theta c)_\infty&=\divisor\left(\frac{c}{c^{q-1}-2}\right)_\infty\\
&=\left((c)-\sum_i(g_i(c))+(q-2)(\infty_L)\right)_\infty\\
&=\sum_i (g_i(c)),
\end{align*}
and 
\[D_2=\divisor\left(0\right)_\infty=0.\]
Then we have $y_i\in\cL(D_i)\otimes \FF_q[t]$ if we write $Q = (y_1,y_2)^\tr$. 

Suppose $\mathscr{D}_2=\sum o_\nu\cdot (\nu)$. By Lemma \ref{Lem:Inductive_Bounds_New}, we have $\mathscr{D}_2=(D_2)_{\mathcal{A}_{22}}$ where $\mathcal{A}_{22}=(\theta-t)+\tau$. The relevant divisors of $\mathcal{A}_{22}$ are
\[
\divisor(\theta-t)=-\sum_i (g_i(c)),\quad \divisor(1)=0.\]
By Lemma \ref{Lem:1D_Case_New} ,  
when $\nu=(g_i(c))$,
\[
o_\nu=\max\left\{
-\left\lceil\frac{-1-0}{q-1}\right\rceil,
-\left\lceil\frac{0-0}{q}\right\rceil,
0
\right\}=0.
\]
Thus
\[
\mathscr{D}_2=0.
\]
Next, by Lemma \ref{Lem:Inductive_Bounds_New} again,
\[
\mathscr{D}_1=(D_1\vee \mathcal{A}_{12}(\mathscr{D}_2)\bigr)_{\mathcal{A}_{11}}.
\]
Since
\[
\mathcal{A}_{12}=1,
\qquad
\mathcal{D}_2=0,
\]
by Lemma \ref{Lem:1D_Case_New_2} we get
\[
\mathcal{A}_{12}(\mathcal{D}_2)=0.
\]
Therefore
\[
D_1\vee \mathcal{A}_{12}(\mathcal{D}_2)=\sum_i (g_i(c)).
\]
Moreover,
\[
\mathcal{A}_{11}=\theta-t,
\qquad
\divisor(\theta-t)=-\sum_i (g_i(c)).
\]
By Lemma \ref{Lem:1D_Case_New_2}, at each $(g_i(c))$,
\[
\max\left\{
-\left\lceil\frac{-(-1)-(-1)}{q^0}\right\rceil,
0
\right\}=0.
\]
Hence
\[
\mathscr{D}_1=0.
\]
In summary, $\mathscr{D}_1=\mathscr{D}_2=0.$

Secondly, we consider 
\[\Phi\bm{x}=\bm{y}\]
where $\Phi=\widetilde{\phi}_t-t$, and $\bm{y}=a\widetilde{P}$. We continue on the steps in Algorithm \ref{algorithm_B} to find the matrix $\mathbb{B}$ as in Theorem \ref{Thm:Induced_Linear_System}. We have
\[
\widetilde{P}=\begin{pmatrix}\theta c\\ 0\end{pmatrix}
,
\qquad
\Phi=
\begin{pmatrix}
\theta-t & 1\\
0 & (\theta-t)+\tau
\end{pmatrix}.
\]
The relevant divisors are
\[
D_{\bm{y}}=\divisor(\widetilde{P})_{\infty}=0,
\qquad
D_{\Phi \bm{x}}=\bigvee_{i,j}\Phi_{ij}(\mathscr{D}_j)=\sum_i (g_i(c)).
\]
Hence
\[
 D_{\Phi {\bm{x}}}\vee D_{\bm{y}}
=\sum_i (g_i(c)).
\]
For $q=5$,
\[
\dim \cL(D_{\Phi {\bm{x}}}\vee D_{\bm{y}})=\deg \left(D_{\Phi {\bm{x}}}\vee D_{\bm{y}}\right)+1=5.
\]
By \SageMath, an $\FF_q$-basis of $\cL(D_{\Phi {\bm{x}}}\vee D_{\bm{y}})$ is
\[
\{\theta,1,\theta c,\theta c^2, \theta c^3\}.
\]
Now we express the equations in $\Phi\bm{x}=\bm{y}$ into $\FF_q[t]$-linear combinations in $\cL(D_{\Phi {\bm{x}}}\vee D_{\bm{y}})\otimes \FF_q[t]$ of the chosen basis. We have 
\[
\begin{pmatrix}
\theta-t & 1\\
0 & (\theta-t)+\tau
\end{pmatrix}
\begin{pmatrix}x_1\\ x_2\end{pmatrix}
= a\begin{pmatrix}\theta c\\ 0\end{pmatrix},
\]
where
\[
x_1,x_2\in \cL(0)\otimes_{\FF_q}\FF_q[t]=\FF_q[t],
\]
i.e., $x_i=x_i^{[0]}$.
Hence
\[
\begin{cases}
(\theta-t)x_1+x_2=a\theta c,\\
(\theta-t)x_2+x_2=0.
\end{cases}
\]
Equivalently,
\[
\begin{cases}
x_1\cdot \theta+(-tx_1+x_2)\cdot 1 + (-a)\cdot \theta c=0,\\
x_2\cdot \theta+(-t+1)x_2\cdot 1=0.
\end{cases}
\]
Thus the corresponding linear system is
\[\mathbb{B}
\begin{pmatrix}
x_1\\ x_2\\ a
\end{pmatrix}=
\begin{pmatrix}
1 & 0 & 0\\
-t & 1 & 0\\
0 & 0 & -1\\
0 & 1 & 0\\
0 & -t+1 & 0
\end{pmatrix}
\begin{pmatrix}
x_1\\ x_2\\ a
\end{pmatrix}=0.
\]
It is not hard to see that $\nul(\mathbb{B})=0$, which implies that the algebraic point $P=\begin{pmatrix}
\theta\\ 0
\end{pmatrix}\in E(L)$ is not $\FF_q[t]$-torsion by Theorem \ref{Thm:Induced_Linear_System}.
    
\end{proof}

\begin{proof}[Task (2)] Similar to Task (1), 
we set \[\widetilde{\phi}_t=\mathrm{J}\phi_t\mathrm{J}^{-1}=\begin{pmatrix}
    \theta & 1\\
    0 & \theta+\tau
\end{pmatrix},\quad \widetilde{P}_1=\mathrm{J}P_1=\begin{pmatrix}
    \theta c\\
    0 
\end{pmatrix},\quad \widetilde{P}_2=\mathrm{J}P_2=\begin{pmatrix}
    (\theta^2+1) c\\
    0 
\end{pmatrix}.\]
First, we consider \[\mathcal{A}\bm{x}=a_1Q+a_2Q_2,\] where $ \mathcal{A}=A_D(\widetilde{\phi}_t-t)$, $Q_i = A_D\widetilde{P}_i$, $\bm{x}=(x_1,x_2)^\tr\in \Mat_{2\times 1}(L[t])$, and $a\in \FF_q[t]$. By the same steps as Task 1, one can find 
\begin{align*}
D_1 &= \divisor(\theta c, (\theta^2+1)c)_\infty =2\sum_i(g_i(c))+(\infty_L),\\
D_2 &= 0,\\
\mathscr{D}_2 &=(D_2)_{\mathcal{A}_{22}}=0,\\
\mathscr{D}_1&=(D_1\vee \mathcal{A}_{12}(\mathscr{D}_2)\bigr)_{\mathcal{A}_{11}}=\sum_i(g_i(c))+(\infty_L).
\end{align*}
Then we have $x_i\in\mathcal{L}(\mathscr{D}_i)\otimes \FF_q[t].$

Secondly, we consider 
\[\Phi\bm{x}=\bm{y}\]
where $\Phi=\widetilde{\phi}_t-t$, and $\bm{y}=a_1\widetilde{P}_1+a_2\widetilde{P}_2$. For these data, one can check the divisor 
\[
 D_{\Phi {\bm{x}}}\vee D_{\bm{y}}
=2\sum_i(g_i(c))+(\infty_L).
\]
By \SageMath, for $q=5$, we obtain $\FF_q$-basis of the following Riemann-Roch spaces:
\begin{align*}
\cL(D_{\Phi {\bm{x}}}\vee D_{\bm{y}}): &\quad\{\theta^2, \theta, 1, \theta^2 c, \theta c, c, \theta^2 c^2, \theta c^2, \theta^2 c^3, \theta c^3\},\\
\cL(\mathscr{D}_1):&\quad\{\theta, 1, \theta c, c, \theta c^2, \theta c^3\},\\
\cL(\mathscr{D}_2):&\quad\{1\}.
\end{align*}
Now we express the equations in $\Phi\bm{x}=\bm{y}$ into $\FF_q[t]$-linear combinations in $\cL(D_{\Phi {\bm{x}}}\vee D_{\bm{y}})\otimes \FF_q[t]$ of the chosen basis. Expanding $x_i$ as follows:
\begin{align*}
x_1 &=x_1^{[0]}\cdot\theta+x_1^{[1]}\cdot 1+x_1^{[2]}\cdot \theta c+x_1^{[3]}\cdot c+x_1^{[4]}\cdot \theta c^2+x_1^{[5]}\cdot \theta c^3,\\
x_2&=x_2^{[0]},
\end{align*}
the system 
\[
\begin{pmatrix}
\theta-t & 1\\
0 & (\theta-t)+\tau
\end{pmatrix}
\begin{pmatrix}x_1\\ x_2\end{pmatrix}
= a_1\begin{pmatrix}\theta c\\ 0\end{pmatrix}+a_2\begin{pmatrix}(\theta^2+1) c\\ 0\end{pmatrix}
\]
gives
\[
\begin{cases}
\begin{aligned}
& x_1^{[0]}\cdot \theta^2 + (-tx_1^{[0]}+x_1^{[1]})\cdot\theta + (-tx_1^{[1]}+x_2)\cdot 1 + (x_1^{[2]}-a_2)\cdot\theta^2c \\
& \quad + (-tx_1^{[2]}+x_1^{[3]}-a_1)\cdot \theta c + (-tx_1^{[3]}+a_2)\cdot c + x_1^{[4]}\cdot \theta^2c^2 \\
& \quad + (-t)x_1^{[4]}\cdot \theta c^2 + x_1^{[5]}\cdot\theta^2c^3 + (-tx_1^{[5]})\cdot \theta c^3 = 0, \\[1ex]
& x_2\cdot \theta + (-t+1)x_2\cdot 1 = 0.
\end{aligned}
\end{cases}
\]
Thus the corresponding matrix $\mathbb{B}$ in Theroem \ref{Thm:Induced_Linear_System} is
\[\mathbb{B}
=\begin{pmatrix} 1 & 0 & 0 & 0 & 0 & 0 & 0 & 0 & 0  \\ 
-t & 1 & 0 & 0 & 0 & 0 & 0 & 0 & 0  \\
0 & -t & 0 & 0 & 0 & 0 & 1 & 0 & 0  \\
0 & 0 & 1 & 0 & 0 & 0 & 0 & 0 & -1  \\
0 & 0 & -t & 1 & 0 & 0 & 0 & -1 & 0  \\
0 & 0 & 0 & -t & 0 & 0 & 0 & 0 & 1  \\
0 & 0 & 0 & 0 & 1 & 0 & 0 & 0 & 0  \\
0 & 0 & 0 & 0 & -t & 0 & 0 & 0 & 0  \\
0 & 0 & 0 & 0 & 0 & 1 & 0 & 0 & 0  \\
0 & 0 & 0 & 0 & 0 & -t & 0 & 0 & 0 \\
0 & 0 & 0 & 0 & 0 & 0 & 1 & 0 & 0 \\
0 & 0 & 0 & 0 & 0 & 0 & -t+1 & 0 & 0 
\end{pmatrix}.
\]
One can check $\nul(\mathbb{B})=\mathrm{Span}_{\FF_q[t]}\left\{(0,0,t,-1,0,0,0,-t^2-1,t)^\tr\right\}$, where the last $2$ coordinates provide the relation \[
-(t^2+1)P_1+tP_2=0.
\]
between the points
$
P_1:=\begin{pmatrix}\theta\\ 0\end{pmatrix},
P_2:=\begin{pmatrix}\theta^2+1\\ 0\end{pmatrix}
\in \Mat_{2\times 1}(L).
$
\end{proof}

\subsection{Applications to Carlitz multiple polylogarithms}
    Let $r\geq 0$ be a positive integer. Following \cite[Def.~5.1.1]{Cha14}, for each $r$-tuple $\fs=(s_1,\dots,s_r)\in\mathbb{Z}_{>0}^r$ of positive integers, we define the Carlitz multiple polylogarithm (CMPL) as
    \[
        \Li_{\fs}(z_1,\dots,z_r):=\sum_{i_1>i_2>\cdots>i_r\geq 0}\frac{z_1^{q^{i_1}}\cdots z_r^{q^{i_r}}}{L_{i_1}^{s_1}\cdots L_{i_1}^{s_1}}\in K\llbracket z_1,\dots,z_r\rrbracket
    \]
    where $L_0:=1$ and $L_i:=(\theta-\theta^q)\cdots(\theta-\theta^{q^i})\in\bA$ for each $i\geq 1$. Note that the CMPL converges on
    \[
        \mathbb{D}_{\fs}:=\{(u_1,\dots,u_r)\in\mathbb{C}_\infty^r\mid|\frac{u_1^{q^{i_1}}}{L_{i_1}^{s_1}}|_\infty\cdots|\frac{u_r^{q^{i_r}}}{L_{i_r}^{s_r}}|_\infty\to 0~\mathrm{as}~0\leq i_1\leq\cdots\leq i_r\to\infty\}.
    \]
    For a finite extension $L/K$ with $L\subset\mathbb{C}_\infty$, we set $\mathbb{D}_\fs(L):=\mathbb{D}_\fs\cap (L^\times)^r$. For $\bu=(u_1,\dots,u_r)\in\mathbb{D}_\fs(L)$, the value $\Li_\fs(\bu)$ is called \emph{Eulerian} if $\Li_\fs(\bu)/\Tilde{\pi}^{\wt(\fs)}\in K$, where $\wt(\fs):=s_1+\cdots+s_r$ and $\tilde{\pi}:=\theta (-\theta)^{1/(q-1)}\prod_{i=1}^{\infty} (1-\theta^{1-q^i})^{-1}\in \mathbb{C}_{\infty}^{\times}$ for some fixed $(q-1)$th root of $-\theta$. 

    For $1 \leq \ell \leq r$,
we set $d_{\ell} := s_{\ell} + \cdots + s_{r}$ and $d := d_{1} + \cdots + d_{r}$.
Let $B$ be a $d \times d$-matrix of the form

\[
\left( \begin{array}{c|c|c}
B[11] & \cdots & B[1r] \\ \hline
\vdots & & \vdots \\ \hline
B[r1] & \cdots & B[rr]
\end{array} \right)
\]
where $B[\ell m]$ is a $d_{\ell} \times d_{m}$-matrix for each $\ell$ and $m$.
We call $B[\ell m]$ the $(\ell, m)$-th block sub-matrix of $B$.

For $1 \leq \ell \leq m \leq r$, we set

\[
N_{\ell} := \left(
\begin{array}{ccccc}
0 & 1 & 0 & \cdots & 0 \\
& 0 & 1 & \ddots & \vdots \\
& & \ddots & \ddots & 0 \\
& & & \ddots & 1 \\
& & & & 0
\end{array}
\right)
\in \Mat_{d_{\ell}}(K),
\]

\[
N := \left(
\begin{array}{cccc}
N_{1} & & & \\
& N_{2} & & \\
& & \ddots & \\
& & & N_{r}
\end{array}
\right)
\in \Mat_{d}(K),
\]

\[
E[\ell m] := \left(
\begin{array}{cccc}
0 & \cdots & \cdots & 0 \\
\vdots & \ddots & & \vdots \\
0 & & \ddots & \vdots \\
1 & 0 & \cdots & 0
\end{array}
\right)
\in \Mat_{d_{\ell} \times d_{m}}(K) \ \ \ (\mathrm{if} \ \ell = m),
\]

\[
E[\ell m] := \left(
\begin{array}{cccc}
0 & \cdots & \cdots & 0 \\
\vdots & \ddots & & \vdots \\
0 & & \ddots & \vdots \\
(-1)^{m-\ell} \prod_{e=\ell}^{m-1} u_{e} & 0 & \cdots & 0
\end{array}
\right)
\in \Mat_{d_{\ell} \times d_{m}}(L) \ \ \ (\mathrm{if} \ \ell < m),
\]

\[
E := \left(
\begin{array}{cccc}
E[11] & E[12] & \cdots & E[1r] \\
& E[22] & \ddots & \vdots \\
& & \ddots & E[r-1,r] \\
& & & E[rr]
\end{array}
\right)
\in \Mat_{d}(L).
\]

Finally, we define the $t$-module $G_{\fs, \bu} := (\GG_{a}^{d}, \rho)$ by
\begin{equation}\label{E:Explicit t-moduleCMPL}
  \rho_{t} = \theta I_{d} + N + E \tau
  \in \Mat_{d}(L[\tau]).
\end{equation}
Note that $G_{\fs,\bu}$ depends  only on $u_{1},\ldots,u_{r-1}$.

Let
\begin{equation}\label{E:v_s,u}
    \bv_{\fs, \bu} := \begin{pmatrix}
    0\\
    \vdots\\
 0 \\
(-1)^{r-1} u_{1} \cdots u_{r} \\
0 \\
 \vdots  \\
 0  \\
 (-1)^{r-2} u_{2} \cdots u_{r} \\
 \vdots \\
 0  \\
 \vdots  \\
 0  \\
 u_{r}
    \end{pmatrix}
    \begin{aligned}
&\left.\begin{matrix}
\\
\\
\\
\\
\end{matrix} \right\} %
d_1\\ %
&\left.\begin{matrix}
\\
\\
\\
\\
\end{matrix} \right\} %
d_2\\ %
&\begin{matrix}
\\
\end{matrix} \, \, \vdots\\ %
&\left.\begin{matrix}
\\
\\
\\
\\
\end{matrix}\right\}%
d_r\\
\end{aligned}  \in G_{\fs,\bu}(L).
\end{equation}

Note that the above $t$-module was only theoretically
constructed in \cite{CPY19}, but was explicitly written down in \cite[\S3.1]{CM19}.
The application of our algorithm to CMPLs at algebraic points is the following result.
\begin{theorem}
    For $\bu\in\mathbb{D}_\fs(L)$, there is an algorithm determining the Eulerianess of $\Li_\fs(\bu)$.
\end{theorem}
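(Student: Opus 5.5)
The plan is to reduce Eulerianness to a torsion question on $G_{\fs,\bu}$ and then apply Theorem~\ref{Thm:Main_Thm} with a single point, $\ell=1$.

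\textbf{Step 1: the torsion criterion.} I would invoke the criterion of Chang--Mishiba \cite{CM19}, which builds on \cite{CPY19}. For $\bu\in\mathbb{D}_\fs(L)$, the value $\Li_\fs(\bu)$ is Eulerian if and only if the special point $\bv_{\fs,\bu}\in G_{\fs,\bu}(L)$ of \eqref{E:v_s,u} is an $\mathbb{F}_q[t]$-torsion point of the $t$-module $G_{\fs,\bu}=(\mathbb{G}_a^d,\rho)$ defined in \eqref{E:Explicit t-moduleCMPL}. The underlying mechanism is as follows. The last coordinate of $\log_{G_{\fs,\bu}}(\bv_{\fs,\bu})$ is, up to sign, $\Li_\fs(\bu)$. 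Eulerianness means $\Li_\fs(\bu)$ is a $K$-multiple of $\tilde{\pi}^{\wt(\fs)}$, and this is equivalent to the logarithm vector lying in $K\otimes_{\mathbb{F}_q[t]}$ of the period lattice. That in turn is equivalent to $\rho_a(\bv_{\fs,\bu})=0$ for some nonzero $a\in\mathbb{F}_q[t]$. I would quote this equivalence rather than reprove it. I expect this to be the main obstacle, because the statement as written in \cite{CM19} may be phrased with $K$-rationality of a logarithm coordinate. If so, I would need to translate it carefully, using the transcendence results of \cite{CPY19} to exclude the possibility that a nonzero non-torsion point has a logarithm whose last coordinate is a $K$-multiple of $\tilde{\pi}^{\wt(\fs)}$.

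\textbf{Step 2: checking the hypotheses of the main theorem.} The $t$-module $G_{\fs,\bu}$ is defined over the global function field $L$. Its matrix $\partial\rho_t-\theta I_d=N$ is nilpotent, so $G_{\fs,\bu}$ is a $t$-module in the sense of \S2.1. Since $u_r\neq 0$, the point $P_1:=\bv_{\fs,\bu}$ is nonzero, and therefore $\{P_1\}$ is $\mathbb{F}_q$-linearly independent. This is exactly the no-trivial-relations hypothesis of Theorem~\ref{Thm:SpPoly}. The relation module is then
\[
\mathcal{R}=\{a\in\mathbb{F}_q[t]\mid\rho_a(\bv_{\fs,\bu})=0\},
\]
which is the annihilator ideal of $\bv_{\fs,\bu}$. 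Consequently $\bv_{\fs,\bu}$ is torsion if and only if $\mathcal{R}\neq 0$.

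\textbf{Step 3: the algorithm.} I would run Algorithms~\ref{algorithm_upper} and~\ref{algorithm_B} on the input $(G_{\fs,\bu},\bv_{\fs,\bu})$. This produces the explicit matrix $\mathbb{B}\in\Mat_{m\times n}(\mathbb{F}_q[t])$ of Theorem~\ref{Thm:Main_Thm}. Next I would compute an $\mathbb{F}_q[t]$-basis of $\Ker(\mathbb{B})$ by Hermite or Smith normal form over the PID $\mathbb{F}_q[t]$, which is a finite computation. By Theorem~\ref{Thm:Main_Thm}, projecting onto the last coordinate maps $\Ker(\mathbb{B})$ surjectively onto $\mathcal{R}$. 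Hence $\mathcal{R}\neq 0$ if and only if some basis vector of $\Ker(\mathbb{B})$ has a nonzero last coordinate, which is a finite check.

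Combining the three steps, the algorithm outputs ``Eulerian'' exactly when this check succeeds. In that case the nonzero generator of $\mathcal{R}$ even gives the torsion order of $\bv_{\fs,\bu}$.
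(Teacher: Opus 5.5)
Your argument is correct, but it decides torsion differently from the paper. The input is the same: the paper also simply quotes the equivalence ``$\Li_\fs(\bu)$ Eulerian $\iff$ $\bv_{\fs,\bu}$ torsion'' from the literature, so no translation step is needed. One small slip in your heuristic: $\pm\Li_\fs(\bu)$ is the $d_1$-th coordinate of the logarithm, i.e.\ the last coordinate of the block of the sub-$t$-module $\mathbf{C}^{\otimes d_1}$, not the last coordinate overall. This is harmless because you only quote the criterion.

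You then apply Theorem~\ref{Thm:Main_Thm} once, with $\ell=1$, to the whole $d$-dimensional $G_{\fs,\bu}$. This is legitimate, since the main theorem covers arbitrary $t$-modules over $L$ and $\bv_{\fs,\bu}\neq 0$ because $u_r\neq 0$. The paper instead does a d\'evissage along $0\to G_{i-1}\to G_i\overset{\pi_i}{\to}\mathbf{C}^{\otimes d_i}\to 0$, where $G_i$ is cut out by an upper-left corner of $\rho_t$. It runs Algorithms~\ref{algorithm_upper} and~\ref{algorithm_B} only on the tensor powers $\mathbf{C}^{\otimes d_i}$ with the point $\pi_i(\bv_i)$. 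It takes a generator $a_i$ of the resulting relation module and passes to $\bv_{i-1}:=\rho_{a_i}(\bv_i)\in G_{i-1}(L)$. It then either meets a zero relation module (non-torsion) or obtains $\mathfrak{a}=\prod_i a_i$ with $\rho_{\mathfrak{a}}(\bv_{\fs,\bu})=0$.

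Your route is shorter and needs no exact sequences. It also avoids the degenerate situation $\pi_i(\bv_i)=0$ in the descent, where the $\mathbb{F}_q$-independence hypothesis of Theorem~\ref{Thm:SpPoly} fails; that case is trivial to handle, but the paper does not address it. What the paper's route buys is smaller computations. Each run is on a $d_i$-dimensional $t$-module defined over $K$ with an explicit triangularization (as in Example~\ref{Ex:3rdCTs}), and the dependence on $\bu$ is moved out of the operator into the points. In your single run, the bounds of Theorem~\ref{Thm:Triangularization} and Lemma~\ref{Lem:Inductive_Bounds_New} are governed by the full dimension $d$ and by the heights of the entries $\prod u_e$ of $E$, so the resulting $\mathbb{B}$ can be far larger.
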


\begin{remark}
    Note that if $L=K$, then it has been shown in \cite[Cor.~6.1.2]{CPY19} that there is an effective criterion for deciding if $\Li_\fs(\bu)$ is Eulerian or not. The result there is based on the knowledge of the structure of rational torsions on the tensor powers of the Carlitz module \cite[Lem.~5.1.3]{CPY19}
    \[
        \mathbf{C}^{\otimes d}(K)_{\mathrm{tor}}:=\{\bm{x}\in\Mat_{d\times 1}(K)\mid [a]_d(\bm{x})=0~\mathrm{for~some}~a\in\mathbb{F}_q[t]\}.
    \]
    Our algorithm presented below provides a different approach that works for arbitrary global function field $L$.
\end{remark}

\begin{proof}
    For $\bu\in\mathbb{D}_\fs(L)$, by \cite[Thm.~4.3.2(a)]{CPY19} (cf. \cite[Thm.~5.1.2,~Cor.~5.1.3]{CM19}), the value $\Li_\fs(\bu)$ is Eulerian if and only if $\bv_{\fs,\bu}\in G_{\fs,\bu}(L)$ is a torsion point. For $1\leq i\leq r$, we denote by $G_i$ the $t$-module whose $t$-action comes from cutting the upper left square of size $d_1+\cdots+d_i$ from the matrix defined in \eqref{E:Explicit t-moduleCMPL}. Then we have the short exact sequence of $t$-modules for each $1\leq i\leq r$
    \[
        0\to G_{i-1}\to G_i\overset{\pi_i}{\to}\mathbf{C}^{\otimes d_i}\to 0.
    \]
    Since $G_r=G_{\fs,\bu}$, we can define $\bv_r:=\bv_{\fs,\bu}$. It follows that $\pi_r(\bv_r)\in\mathbf{C}^{\otimes d_r}(L)$. Now, we can perform Algorithm~\ref{algorithm_upper} and Algorithm~\ref{algorithm_B} to compute the relation module of $\pi_r(\bv_r)\in\Mat_{d_r\times 1}(L)$ on the $t$-module $\mathbf{C}^{\otimes d_r}$. If the relation module is trivial, then $\bv_{\fs,\bu}$ can not be a torsion element in $G_{\fs,\bu}(L)$ and we are done. Otherwise, the relation module must be a cyclic submodule since $\mathbb{F}_q[t]$ is a principal ideal domain. Assume that it is generated by $a_r\in\mathbb{F}_q[t]$. Then $\pi_r\big(\rho_{a_r}(\bv_r)\big)=[a_r]_{d_r}\big(\pi_r(\bv_r)\big)=0$
    shows that $\bv_{r-1}:=\rho_{a_r}(\bv_r)\in\Ker\pi_r=G_{r-1}(L)$.

    Consequently, the inductive process above either terminates if the relation module of $\pi_i(\bv_i)\in\mathbf{C}^{\otimes d_i}(L)$ is trivial, or we find a sequence of non-zero elements $\{a_i\}_{i=0}^r\subset\mathbb{F}_q[t]$ such that if we define $\mathfrak{a}:=\prod_{i=0}^ra_i\in\mathbb{F}_q[t]$ then $\rho_{\mathfrak{a}}(\bv_{\fs,\bu})=0$. In the latter case, $\bv_{\fs,\bu}$ is a torsion element on $G_{\fs,\bu}$ which implies that $\Li_\fs(\bu)$ is Eulerian. The desired result now follows.
\end{proof}

\bibliographystyle{alpha}

\end{document}